\documentclass[a4paper,11pt,reqno]{amsart}
\usepackage{amsmath,amssymb,amsthm,graphicx}
\usepackage[utf8]{inputenc} 
\usepackage{array}
\usepackage[utf8]{inputenc}
\usepackage{textcomp} % often helpful
\usepackage{subcaption}
\usepackage{placeins}
\usepackage{float}
\usepackage{verbatim}
\usepackage{hyperref}
\usepackage{xspace}
\usepackage{tikz-cd}
\usepackage{mathtools}
\usepackage{xcolor}
\usepackage{tensor}
\usepackage{paracol}
\usepackage[autostyle=true]{csquotes}
\usepackage[
  backend=biber,
  style=alphabetic,
  maxnames=3,
  doi=true,
  url=false,
  isbn=false,
  eprint=false,
  giveninits=true
]{biblatex}
\usepackage{accents}
\allowdisplaybreaks
\numberwithin{equation}{section}

\DeclareFieldFormat{title}{#1}
\newtheorem{thm}{Theorem}

\newtheorem{theorem}{Theorem}[section]
\newtheorem{lemma}{Lemma}[section]
\newtheorem{definition}{Definition}[section]
\newtheorem{prop}{Proposition}[section]
\newtheorem{remark}{Remark}[section]

\newtheorem{ex}{Example}[section]
\newtheorem{cor}{Corollary}[section]

\DeclareMathOperator{\sign}{sign}
\DeclareMathOperator{\diag}{diag}
\DeclareMathOperator{\Aut}{Aut}
\DeclareMathOperator{\Img}{Im}
\DeclareMathOperator{\Rel}{Re}
\DeclareMathOperator{\End}{End}
\DeclareMathOperator{\tr}{tr}
\DeclareMathOperator{\ad}{ad}
\DeclareMathOperator{\Hom}{Hom}
\DeclareMathOperator{\id}{id}
\DeclareMathOperator{\ind}{ind}
\DeclareMathOperator{\Vol}{Vol}
\DeclareMathOperator{\Diff}{Diff}

\DeclareUnicodeCharacter{2082}{$_2$}
\DeclareUnicodeCharacter{03B7}{$\nu$}

\title{The Crowley--Nordstr\"om invariants of $G_2$-Structures on Aloff--Wallach Spaces}

\author{Artem Aleshin}
\address{Stony Brook University, Stony Brook, NY, USA}
\email{artem.aleshin@stonybrook.edu}
\subjclass[2020]{53C25, 58J28, 53C30, 57R20, 22E46.}

\begin{document}
\begin{abstract}
We study homogeneous $G_2$-structures on the Aloff--Wallach spaces $N_{k,l}=SU(3)/S^1_{k,l},$ and compute the Crowley--Nordstr\"om invariants associated to these structures. In the case where the first Pontryagin class is rationally trivial, we derive a new intrinsic formula for the $\xi$ invariant. Using Goette's expressions for the $\eta$ invariants of homogeneous spaces, we obtain explicit formulas for the invariants of homogeneous $G_2$-structures on $N_{k,l}$. In particular, we prove that
\[
\bar{\nu}(\varphi)=0,
\qquad
\xi(\varphi)=\frac{3}{2}kl(k+l).
\]

As a consequence, we obtain examples of manifolds carrying nearly-parallel $G_2$-structures that lie in distinct connected components of the moduli space of $G_2$-structures.

%We also compare these invariants for the nearly-parallel $G_2$-structures arising from 3-Sasakian structures.
\end{abstract}

\maketitle
\section{Introduction and main results}
\label{intro}
In \cite{Crowley2015} Crowley and Nordstr\"om introduced new invariants $\nu$ and $\xi$ of $G_2$-structures, which were defined in terms of characteristic classes of the coboundary. Later in \cite{Crowley2025} together with Goette they proved an intrinsic formula for the $\nu$ invariant and introduced a $\mathbb{Z}$-valued refinement $\bar{\nu}$ of $\nu$. The invariants $\nu$ and $\xi$ are preserved under the deformations of $G_2$-structures. Although $\bar{\nu}$ is not preserved under arbitrary deformations of $G_2$-structures, it is preserved under deformations through parallel $G_2$-structures and through $G_2$-structures inducing the metrics of positive scalar curvature. In \cite{Crowley2025}, the $\bar{\nu}$ invariant was used to distinguish parallel $G_2$-structures on the extra-twisted connected sums. 

In this paper, we study these invariants in the setting of homogeneous $G_2$-structures, with particular emphasis on nearly-parallel examples. A natural family is provided by the Aloff--Wallach spaces $N_{k,l}$, each of which admits four homogeneous nearly-parallel $G_2$-structures. The problem of computing $\nu$ invariants of these structures was suggested by \cite{Ball2019}, where the authors showed that nearly parallel $G_2$-structure on the same Aloff--Wallach space can be distinguished using $G_2$-instantons.

Moreover, distinct Aloff--Wallach spaces may be diffeomorphic \cite{Kreck1991}. Consequently, a single smooth manifold can admit homogeneous nearly-parallel $G_2$-structures arising from non-equivalent homogeneous structures. This naturally leads to the question of whether these structures are homotopic or not.

In the first part of the paper, we derive an intrinsic formula for the $\xi$ invariant in the special case when $p_1$ is a torsion class. The existence of such a formula and a strategy for deriving it was suggested to us by J. Nordstr\"om. We then compute the $\bar{\nu}$ invariants and $\xi$ invariants of homogeneous nearly-parallel $G_2$-structures on Aloff--Wallach spaces. Our approach uses the formulas for the $\eta$ invariants of homogeneous spaces proven in \cite{Goette2009}, which allow explicit computations of these invariants.
\begin{thm}[An intrinsic formula for the $\xi$ invariant]
	\label{thm:xiintr}
	Let $M^7$ be a $7$-dimensional spin manifold such that its first Pontryagin class is rationally trivial: $p_1(M) = 0 \in H^4(M,\mathbb{Q})$. Let $\varphi$ be a $G_2$-structure on $M$, and let $\zeta$ be the associated unit spinor, then $\xi$ invariant can be computed as 
	\[
		\xi(\varphi) = 14 \int_M \zeta^* \psi + \frac{45}{2} \eta(B_M) - \frac{3}{8} \int_M p_1(\nabla^{LC}) \wedge h(\nabla^{LC}),
	\]
	where $\psi$ is the Mathai-Quillen current, $B_M$ is the odd signature operator, $p_1(\nabla^{LC}$ is the first Pontryagin form with respect to the Levi-Civita connection and $h(\nabla^{LC})$ is such $3$-form that $dh(\nabla^{LC}) = p_1(\nabla^{LC})$.
\end{thm}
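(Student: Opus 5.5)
The plan is to start from the Crowley--Nordstr\"om definition of $\xi$ as a relative characteristic number of a spin coboundary, and convert each term into boundary data. Choose a compact spin $8$-manifold $W$ with $\partial W = M$. After the usual adjustment of $W$ by connected sums, the spinor $\zeta$ extends to a section of the positive spinor bundle $S^+W$ that is transverse to zero; equivalently, we get a $\mathrm{Spin}(7)$-structure away from finitely many points. Since $p_1(M)$ is rationally trivial, $p_1(W)$ lifts to a class in $H^4(W,M;\mathbb{Q})$, so $p_1^2(W)$ is a well-defined rational number. I would recall the definition of $\xi(\varphi)$ in \cite{Crowley2015} as a fixed rational combination of three quantities: the relative Euler number $\chi(W,\zeta)$ (the signed count of zeros of the extended spinor, i.e.\ the obstruction to extending $\varphi$ as a $\mathrm{Spin}(7)$-structure), the signature $\sigma(W)$, and $p_1^2(W)$. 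The three terms in the statement of Theorem~\ref{thm:xiintr} then correspond to these three quantities.

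Next, equip $W$ with a metric of product type near $M$ restricting to the $G_2$-metric $g_\varphi$, and express each quantity through Chern--Weil forms plus a boundary correction.
\begin{itemize}
\item For the Euler term, apply Mathai--Quillen transgression to the section $\zeta$ of the sphere bundle of $S^+W$. This gives $\chi(W,\zeta) = \int_W e(S^+W,\nabla) - \int_M \zeta^*\psi$. Here $e(S^+W)$ is an explicit combination of $e(TW)$, $p_2$ and $p_1^2$, and the boundary term is exactly the current in the statement.
\item For the signature, use the Atiyah--Patodi--Singer theorem: $\sigma(W) = \int_W L_2(\nabla) - \eta(B_M)$, with $L_2 = \tfrac{1}{45}(7p_2 - p_1^2)$.
\item For $p_1^2$, write $p_1(\nabla) = d\tilde h$ near the boundary, extending $h(\nabla^{LC})$. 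Then $p_1^2(W) = \int_W p_1(\nabla)^2 - \int_M p_1(\nabla^{LC})\wedge h(\nabla^{LC})$. To see this, represent the relative class by $p_1(\nabla) - d(\chi\,\tilde h)$ with a cutoff $\chi$; the correction is independent of the choice of $h$ up to rational periods, because $p_1(M)$ is torsion.
\end{itemize}

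Substituting these three formulas into the definition, the boundary terms assemble into $14\int_M\zeta^*\psi + \tfrac{45}{2}\eta(B_M) - \tfrac38\int_M p_1\wedge h$. The coefficients $14$, $\tfrac{45}{2}$ and $\tfrac{3}{8}$ are fixed by the coefficients in the definition of $\xi$, and I would check them against the closed-manifold identity for $\mathrm{Spin}(7)$-manifolds.

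The main obstacle is showing that the bulk integrals cancel identically: the curvature integrand in the chosen combination must be exact. This is precisely the pointwise form of the characteristic-class relation that makes $\xi$ a deformation invariant. On a closed spin $8$-manifold, a relation such as $\chi(S^+) = \tfrac12\big(\tfrac14 p_1^2 - p_2\big) \pm \tfrac12 e$ and the $\hat A$/index relations hold at the level of Chern--Weil forms, because both sides are computed from the same curvature via the splitting principle for $\mathrm{Spin}(8)$.

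First, I would write $e(S^+W)$, $e(TW)$, $p_2$ and $p_1^2$ as invariant polynomials on $\mathfrak{so}(8)$ using triality, and verify that the defining combination of $\xi$ gives the zero polynomial. If it gives a nonzero but constant multiple of $\hat A$ or of the twisted Dirac index density instead, I would apply APS again to the corresponding Dirac operator on $M$. For the Levi-Civita connection of a $G_2$-metric, that operator's $\eta$-invariant is expressible through $\eta(B_M)$ and $\int_M\zeta^*\psi$, as in the intrinsic $\bar\nu$ formula of \cite{Crowley2025}. Finally, I would confirm that the resulting expression is independent of the choices of $W$, $h$ and the metric extension.
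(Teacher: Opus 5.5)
Your architecture matches the paper's: Mathai--Quillen for the zero count $n(\bar\zeta)$, Atiyah--Patodi--Singer for $\sigma(W)$, the Kreck--Stolz correction $p_W^2=\tfrac14\int_W p_1^2-\tfrac14\int_M p_1\wedge h$, and then exact cancellation of the bulk integrals. The gap is that you never pin down the definition you are manipulating, and the version you recall is missing a term that the cancellation needs. The definition is
\[
\xi(\varphi)=7\chi(W)-\tfrac{45}{2}\sigma(W)-14\,n(\bar\zeta)+\tfrac32 p_W^2,\qquad p_W=\tfrac12 p_1(W).
\]
So besides the zero count it contains the ordinary Euler characteristic $\chi(W)$. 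This matters. By the relation $2e(\nabla^{S^+})=48\widehat{A}+e(\nabla^{TW})-3L$ one has $e(\nabla^{S^+})=\tfrac12 e(\nabla^{TW})+\tfrac1{16}(p_1^2-4p_2)$, which contains the Pfaffian. The Pfaffian is linearly independent of $p_1^2$ and $p_2$, hence of $\widehat A$. With only your three quantities, the Euler-form part of $-14\int_W e(\nabla^{S^+})$, namely $-7\int_W e(\nabla^{TW})=-7\chi(W)$, survives. No invariant-polynomial identity can remove it. It is cancelled exactly by $7\chi(W)=7\int_W e(\nabla^{TW})$, i.e.\ Chern--Gauss--Bonnet for a metric of product type near $\partial W$, and your plan never mentions this ingredient.

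With the correct definition, the step you call the main obstacle is a two-line check rather than a triality computation. The bulk integrand is
\[
7e-14\,e(S^+)-\tfrac{45}{2}L+\tfrac38 p_1^2=(7-7)e+\left(-\tfrac78+\tfrac12+\tfrac38\right)p_1^2+\left(\tfrac72-\tfrac72\right)p_2=0 .
\]
The surviving boundary terms are $+14\int_M\zeta^*\psi$, $+\tfrac{45}{2}\eta(B_M)$ and $-\tfrac38\int_M p_1\wedge h$, which is the theorem. The paper organizes the same cancellation through $\widehat A$. The $\nu$-part $\chi(W)-3\sigma(W)-2n(\bar\zeta)$ contributes $-48\int_W\widehat A$, the part $p_W^2-\sigma(W)$ contributes $+224\int_W\widehat A$, and $7\cdot 48=\tfrac32\cdot 224$. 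Your fallback would not rescue things if the cancellation failed. The intrinsic $\bar\nu$ formula does not express $\eta(D_M)$ through $\eta(B_M)$ and $\int_M\zeta^*\psi$; it defines a new combination of them. A leftover multiple of $\int_W\widehat A$ would, by APS, leave $\ind(D_W^+)$ in the answer, and that depends on $W$. So the formula is intrinsic precisely because the bulk cancellation is exact, and that computation is the content of the proof.
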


Our main result computes the $\bar{\nu}$ and $\xi$ invariants for the homogeneous $G_2$-structures on Aloff--Wallach spaces.
\begin{thm}
	\label{thm:mainres}
	Let $N_{k,l}$ be the Aloff--Wallach space and let $\varphi$ be a homogeneous $G_2$-structure on $N_{k,l}$.
	\begin{enumerate}
		\item If $\varphi$ induces a metric of positive scalar curvature, then 
			\[
				\bar{\nu}(\varphi) = 0.
			\]
		\item If $\varphi$ induces the chosen orientation on the Aloff--Wallach space $N_{k,l}$, then 
			\[
				\nu(\varphi) = 0, \text{ and } \xi(\varphi) = \frac{3}{2}kl(k+l).
			\]
	\end{enumerate}
\end{thm}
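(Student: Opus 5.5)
The plan is to evaluate the intrinsic formulas on the homogeneous space $N_{k,l}=SU(3)/S^1_{k,l}$: the Crowley--Goette--Nordstr\"om formula for $\bar{\nu}$, and Theorem~\ref{thm:xiintr} for $\xi$. That theorem applies because $H^4(N_{k,l};\mathbb{Z})$ is finite cyclic of order $k^2+kl+l^2$, so $p_1(N_{k,l})$ is torsion. The intrinsic formula for $\bar\nu$ has the shape
\[
\bar{\nu}(\varphi)=\int_M\zeta^*\psi-24\,\eta(D_M)+3\,\eta(B_M),
\]
with $D_M$ the Dirac operator and $B_M$ the odd signature operator of the metric $g_\varphi$. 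So in both cases I need three ingredients: the Mathai--Quillen term $\int\zeta^*\psi$, the $\eta$-invariants, and (for $\xi$) the Chern--Simons term $\int p_1\wedge h$.

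\textbf{Step 1: Mathai--Quillen term.} A homogeneous $G_2$-structure has an $SU(3)$-invariant unit spinor $\zeta$. Then $\zeta^*\psi$ is built from invariant tensors of $\nabla^{LC}$ and the covariant derivative $\nabla\zeta$, which is determined by the torsion of $\varphi$. I will write $\psi$ in terms of the curvature of the spin connection and of $\nabla\zeta$. All of these are constant in an invariant frame on $\mathfrak m=\mathfrak{su}(3)\ominus\mathfrak{u}(1)_{k,l}$, so the integral is a polynomial in $k,l$ and the metric parameters times $\Vol(N_{k,l})$. To avoid redoing this for every structure, I would deform within homogeneous structures to a convenient representative. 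The normal homogeneous metric with its nearly-parallel structure is a natural choice, since there $\nabla\zeta=c\,\varphi\cdot\zeta$ with $c$ constant.

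\textbf{Step 2: $\eta$-invariants via Goette.} For a normal homogeneous space $G/H$, Goette's formula writes $\eta(D)$ and $\eta(B)$ as a finite sum over weights: a Weyl-type sum over the roots of $SU(3)$ not in $\mathfrak h$, plus an $\hat A$-type integral correction involving Chern--Simons forms of the reductive connection. I will apply it with $H=S^1_{k,l}$, using the complexified isotropy representation, whose $S^1$ weights are $k-l$, $2k+l$ and $k+2l$ (up to sign). This gives closed rational-function expressions in $k,l$. For part (1), I combine them with Step 1 to obtain $\bar\nu=0$. Since $\bar\nu$ is constant along paths of positive scalar curvature $G_2$-structures, I only need to check one representative in each path-component of homogeneous positive scalar curvature structures. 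I expect the denominators $k^2+kl+l^2$ to cancel.

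\textbf{Step 3: Chern--Simons term and $\xi$.} I take $h(\nabla^{LC})$ to be the transgression between $\nabla^{LC}$ and the canonical reductive connection $\nabla^{can}$. Here $p_1(\nabla^{can})$ is pulled back from $BS^1$; it equals $q\,e^2$ with $e$ the Euler form of the circle bundle $SU(3)\to N_{k,l}$. Since $e$ is exact on $N_{k,l}$ (as $H^2(N_{k,l};\mathbb Q)=0$), we can write $e=d\alpha$ and obtain $h=\mathrm{Tp}_1(\nabla^{LC},\nabla^{can})+q\,\alpha\wedge e$. Then $\int p_1\wedge h$ reduces to $\int\alpha\wedge e\wedge e$, a linking-form computation giving a multiple of $1/(k^2+kl+l^2)$, plus invariant-form integrals. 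Substituting everything into Theorem~\ref{thm:xiintr} should yield $\tfrac32 kl(k+l)$, with the rational denominators cancelling against the $\eta$-contributions. The statement $\nu=0$ follows by reducing $\bar\nu$ mod $48$, since $\nu$ is independent of the structure up to deformation and orientation; alternatively, it follows from the Euler characteristic/spinor count. Orientation conventions fix signs.

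\textbf{Main obstacle.} I expect the hardest part to be the precise bookkeeping in Goette's formula: identifying the correct Weyl-type sum for $S^1_{k,l}\subset SU(3)$ and its correction term for $B$ versus $D$. For non-normal homogeneous metrics the formula does not apply directly. There I would first try the adiabatic/deformation argument above, reducing to the normal metric while tracking the variation of $\int\zeta^*\psi$, which is local and computable.
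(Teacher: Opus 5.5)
Your overall strategy matches the paper's: reduce to one homogeneous structure with the normal metric, evaluate Goette's formulas, and feed the result into Theorem~\ref{thm:xiintr}. However, two steps fail as you state them.

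\textbf{The missing spectral-flow term.} Your description of Goette's formula leaves out the spectral-flow term. The formulas are $\eta(D)=I_D+2\int_M\widetilde{\widehat{\mathrm{A}}}(\nabla^0,\nabla^{TM})+J_D$ and $\eta(B)=I_B+\int_M\widetilde{L}(\nabla^0,\nabla^{TM})+J_B$. Here $J$ accounts for eigenvalues crossing zero along the path from the reductive operators $D^{1/3}$, $B^{1/3,0}$ to the Levi-Civita operators $D^{1/2}$, $B^{1/2,1/2}$, and it is not negligible. One has $J_D=0$ but $J_B=-2$. To see this, note that $({}^\gamma\widetilde{B})^2=\|\gamma+\rho_G\|^2-c_H^{\hat{\widetilde{\pi}}}$ exceeds $2$ for every nontrivial $\gamma$, while the perturbation along the path has norm at most $\sqrt2$. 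So only the trivial representation can contribute. On the $10$-dimensional space $(S\otimes S)^H$ one finds $\eta=2$, $h=0$ for $B^{1/3,0}$, and $\eta=0$, $h=2$ for $B^{1/2,1/2}$. Since $I_B=8I_D+2$, i.e.\ $-24I_D+3I_B=6$, leaving out $J_B$ would give $\bar\nu=6$ and $\xi=\tfrac32kl(k+l)+45$.

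\textbf{The false cohomology claim in Step~3.} Step~3 rests on the claim $H^2(N_{k,l};\mathbb{Q})=0$, which is false. Consider the Gysin sequence of $S^1\to SU(3)\to N_{k,l}$. Since $H^1(SU(3))=H^2(SU(3))=0$, cup product with $e$ is an isomorphism $H^0(N_{k,l})\to H^2(N_{k,l})$. Hence $e$ generates $H^2(N_{k,l};\mathbb{Z})\cong\mathbb{Z}$ and is not exact; only $e^2$ is torsion (it generates $H^4\cong\mathbb{Z}_{k^2+kl+l^2}$). So there is no $\alpha$ with $d\alpha=e$, and you must produce a primitive of $p_1(\nabla^0)\propto e^2$ directly. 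The paper finds one among invariant $3$-forms, built from $e^{145},e^{246},e^{347}$ and $e^{123}-e^{167}+e^{257}-e^{356}$ (Lemma~\ref{lem:int}).

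\textbf{Smaller points.} The Mathai--Quillen term enters $\bar\nu$ with coefficient $2$, not $1$. The normal metric is not Einstein, so it carries no nearly-parallel structure, and $\nabla\zeta=c\,\varphi\cdot\zeta$ is unavailable there.

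You also do not need to compute $\zeta^*\psi$ at all. A homogeneous spinor is $\nabla^0$-parallel (Lemma~\ref{lem:redpar}), so $\zeta^*\psi(\nabla^0)=0$ by Lemma~\ref{lem:red}. The transgression formula \eqref{eq:trans} then turns $2\int_M\zeta^*\psi(\nabla^{SM})$ into $48\int_M\widetilde{\widehat{\mathrm{A}}}-3\int_M\widetilde{L}$. This cancels the Chern--Simons terms in Goette's formulas exactly and leaves $\bar\nu=-24(I_D+J_D)+3(I_B+J_B)$ (Proposition~\ref{prop:homnu}).

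Finally, reducing to a single representative needs the connectedness argument of Lemma~\ref{lem:nu}. The homogeneous psc metrics form a connected set (Lemma~\ref{lem:connctd}), and $\widetilde{\pi}$ has two zero weights, so the homogeneous unit spinors form a circle.
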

Two $G_2$-structures on $M$ are called \textit{deformation equivalent} if they can be connected by a path of $G_2$-structures up to the action of diffeomorphism (detailed definition is given in Section \ref{sect:homcl}).  When the first Pontryagin class is rationally trivial of $M$, $\xi$ is a complete invariant of the deformation equivalence classes. From this we obtain the following corollaries.
\begin{cor}
	\label{cor:noneq}
	Let $(k,l), (k',l')$ be pairs such that the manifolds $N_{k,l}$ and $N_{k',l'}$ are diffeomorphic, but $kl(k+l) \neq \pm k'l'(k'+l')$ (which is equivalent to requiring that the subgroups $S^1_{k,l}$ and $S^1_{k',l'}$ are not conjugate inside $SU(3)$), then any homogeneous structures $\varphi_{k,l}$ and $\varphi_{k',l'}$ are not homotopic.
\end{cor}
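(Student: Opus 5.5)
The corollary should follow directly from Theorem \ref{thm:mainres} together with the fact that $\xi$ is a deformation invariant. Homotopic $G_2$-structures are in particular deformation equivalent, and $\xi$ is preserved under deformation and pullback by diffeomorphisms, up to a sign controlled by orientation. Suppose $f\colon N_{k,l}\to N_{k',l'}$ is a diffeomorphism and $\varphi_{k,l}$, $f^*\varphi_{k',l'}$ are homotopic. Then they induce the same orientation, and
\[
\xi(\varphi_{k,l})=\xi(f^*\varphi_{k',l'})=\xi(\varphi_{k',l'}).
\]

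Before applying Theorem \ref{thm:mainres}(2), I would reduce to structures inducing the chosen orientations. A homogeneous $G_2$-structure inducing the opposite orientation corresponds, via the orientation-reversing diffeomorphism $N_{k,l}\cong N_{-k,-l}$ (or an automorphism swapping orientation), to a structure on $N_{k,l}$ with the chosen orientation for the relabelled parameters. Under reversal of orientation, $\xi$ changes sign, since every term in Theorem \ref{thm:xiintr} is odd under orientation reversal: the $\eta$-invariant, the Mathai--Quillen integral and the Chern--Simons type term all flip. So in all cases
\[
\xi(\varphi_{k,l})=\pm\tfrac32\,kl(k+l).
\]
Equality with $\xi(\varphi_{k',l'})=\pm\tfrac32\,k'l'(k'+l')$ would force $kl(k+l)=\pm k'l'(k'+l')$, contradicting the hypothesis.

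For the parenthetical equivalence, I would recall that $S^1_{k,l}$ is conjugate to $S^1_{k',l'}$ iff $(k',l')$ is obtained from $(k,l)$ by the $S_3$ action permuting $(k,l,-k-l)$ and by overall sign. The quantity $kl(k+l) = -k\cdot l\cdot(-(k+l))$ is, up to sign, the elementary symmetric function $\sigma_3$ of the triple $(k,l,-k-l)$, whose $\sigma_1$ vanishes. The converse needs the coprimality normalization together with the fixed invariant $\sigma_2$. Diffeomorphic spaces share $H^4$ of order $k^2+kl+l^2=-\sigma_2$, so $\sigma_2$ is fixed. Then $\sigma_3$ up to sign determines the triple up to permutation and sign, because the triple is the root set of $t^3+\sigma_2 t-\sigma_3$.

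The main obstacle I expect is making the orientation and sign bookkeeping rigorous, namely the behaviour of $\xi$ under diffeomorphisms that may reverse orientation. If $f$ reverses orientation, the argument must be run with $-\xi$, which is why the statement uses $\pm$. I would also need to confirm that the convention for "homotopic" includes transport by a diffeomorphism.
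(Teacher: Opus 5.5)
Your argument is correct and is essentially the paper's proof: Theorem \ref{thm:mainres} gives $\xi(\varphi_{k,l})=\pm\frac32 kl(k+l)$, and since $\xi$ is natural under diffeomorphisms and constant along deformations, a homotopy would force $kl(k+l)=\pm k'l'(k'+l')$. You are more careful than the paper's proof, which tacitly uses only the chosen orientations; the sign rule you need is Remark \ref{rm:neginv}, so your aside about an orientation-swapping automorphism is unnecessary, and by the paper such a self-diffeomorphism exists only for $N_{1,-1}$. Your symmetric-function argument also proves the parenthetical conjugacy equivalence, which the paper only asserts.
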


\begin{remark}
    The Aloff--Wallach space $N_{1,-1}$ is the only Aloff--Wallach space admitting a self-diffeomorphism reversing orientation. It follows that the homogeneous $G_2$-structures inducing different orientations on $N_{1,-1}$ are deformation equivalent, and hence their invariants $\nu$ and $\xi$ vanish. This is consistent with the formulas proven in this paper, which imply that, up to permutation, $(1,-1,0)$ is the unique triple such that corresponding homogeneous $G_2$-structures have vanishing $\nu$ and $\xi$. 
\end{remark}
Consequently, we obtain examples of manifolds admitting nearly-parallel $G_2$-structures inducing the same orientations but lying in different components of the deformation space of $G_2$-structures. 
\begin{ex}
	These examples are given by the pairs of Aloff--Wallach spaces that were found in \cite{Kreck1991}, two such pairs are 
\[
	N_{-4638861,582656} \cong N_{-2594149,5052965},
\]
and
\[
	N_{8287903,14825547} \cong N_{-18401045,21746838}.
\]
\end{ex}

\begin{remark}
	Aloff--Wallach spaces also produce examples of pairs of closed manifolds admitting nearly-parallel $G_2$-structures that are homeomorphic but not diffeomorphic, similarly to the results from \cite{Crowley2021} for manifolds with parallel $G_2$-structures. This follows from the homeomophism and diffeomorphism classification of Aloff--Wallach spaces given in \cite{Kreck1991}.
\end{remark}

We also compare, for an arbitrary 3-Sasakian manifold, the nearly-parallel $G_2$-structures associated to a 3-Sasakian structure $\varphi_{ts}$ with the squashed nearly parallel $G_2$-structure $\varphi_{sq}$. We note that these structure are in fact homotopic through $G_2$-structures inducing the metrics of positive scalar curvature, and hence

\[
	\bar{\nu}(\varphi_{sq}) = \bar{\nu}(\varphi_{ts}),
\]
and
\[
	\xi(\varphi_{sq}) = \xi(\varphi_{ts}).
\]
In particular, it follows from the $h$-principle \cite[Theorem 1.8]{Crowley2015} that there exists a path of coclosed $G_2$-structures connecting $\varphi_{ts}$ and $\varphi_{sq}$.
%In particular, this shows that neither of invariants that we are considering distinguish between these nearly parallel $G_2$-structures, even though they can be distinguished using finer gauge-theoretic methods such as deformed $G_2$-instantons \cite{Lotay2022}.

As a consequence, we now know all the $\bar{\nu}$ invariants of the homogeneous proper nearly-parallel $G_2$ manifolds. Namely, up to the sign defined by choice of orientation:

\begin{itemize}
	\item By Example \ref{ex:sphere} \[ \bar{\nu}(\varphi_{sq}(S^7)) = 1,\]
	\item By Theorem \ref{thm:mainres} \[ \bar{\nu}(\varphi(N_{k,l})) =  0,\]
	\item By [\cite{Crowley2025}, Example 1.8] \[ \bar{\nu}(\varphi(SO(5)/SO(3))) = 1 .\]
\end{itemize}

Using an intrinsic formula for the $\xi$ invariant we are also able to compute the $\xi$ invariant for the $G_2$-structure on the Berger space $SO(5)/SO(3)$. 

So, we also know the $\xi$ invariant for all of the homogeneous proper nearly-parallel $G_2$-structures:
\begin{itemize}
	\item By \cite[Example 1.14]{Crowley2015} \[ \xi(\varphi_{sq}(S^7)) = -7,\]
	\item By Theorem \ref{thm:mainres} \[ \xi(\varphi(N_{k,l})) = \frac{3}{2}kl(k+l),\]
	\item By Theorem \ref{thm:xiintr} and \cite[Proposition 2.5]{Goette2004} \[ \xi(\varphi(SO(5)/SO(3))) = -\frac{11}{10}.\]
\end{itemize}

As a consistency check we use Goette's methods to compute the $s$ invariant introduced in \cite{Kreck1993} for the Aloff--Wallach spaces. Our computation shows that 
\[
	s(N_{k,l}) = \frac{kl(k+l)}{2^5\cdot 7},
\]
which is consistent with the result of Kreck and Stolz \cite{Kreck1993}.

In Section \ref{sect2}, we develop the necessary background on $G_2$-structures and Crowley--Nordstr\"om invariants to prove Theorem \ref{thm:xiintr}. In Section \ref{sect3}, we discuss the homogeneous $G_2$-structures and provide formulas for the $\bar{\nu}$ and $\xi$ invariants in the homogeneous case. In Section \ref{sect4}, we specialise the previous discussion to the Aloff--Wallach spaces and recall the diffeomorphism classification of these spaces from \cite{Kreck1991}. In Section \ref{sect5} and Appendix \ref{appx}, we carry out explicit computations of the terms constituting the $\bar{\nu}$ and $\xi$ invariants. In Section \ref{sect6}, we compare the nearly parallel $G_2$-structures given by 3-Sasakian structures with the associated squashed nearly-parallel proper $G_2$-structures. Finally, in Section \ref{sect7}, we gather some results about first Pontryagin class of nearly-parallel $G_2$-manifolds and note that first Pontryagin class of all known examples is a torsion class, which implies that $\xi$ is a complete invariant of homotopy classes of $G_2$-structures for these spaces.

\section{\texorpdfstring{Invariants of $G_2$-structures}{Invariants of G2-structures}}
\label{sect2}

\subsection{\texorpdfstring{The $\nu$ and $\bar{\nu}$ invariants of $G_2$-structures}{The nu invariants of G2-structures}}
\label{nuinv}

\begin{definition}
	A $G_2$-structure on a manifold $M^7$ is a choice of a 3-form $\varphi$, which is pointwise equivalent to the form 
\[
dx^{123} + dx^{145} + dx^{167} + dx^{246} - dx^{257} - dx^{347} - dx^{356}.
\]
Equivalently, the $G_2$-structure is determined by the choice of the orientation, metric $g$ and a unit spinor $\zeta \in \Gamma(SM)$.
\end{definition}

\begin{definition}
A nearly-parallel $G_2$-structure is a $G_2$-structure $\varphi$, satisfying 
\[
	d\varphi = \lambda {*}\varphi,
\]
for some $\lambda \neq 0$. 
\end{definition}

The metric induced by the nearly-parallel $G_2$-structure is Einstein with constant $7\cdot 24 \lambda^2$ \cite{Friedrich1997}.
 
\begin{definition}[{\cite[Definition 1.6]{Crowley2015}}]
    Let $(g,\zeta)$ be a $G_2$-structure on the closed manifold $M^7$. Let $W^8$ be its spin coboundary, i.e. $M = \partial W$ and the spin structure on $W$ restricts to the spin structure on $M$. Let $\bar{\zeta}$ be the section of $\Sigma^+W$ transversal to zero section such that its restriction to $M$ is $\zeta$. Then the $\nu$ invariant is defined as:
    \[
	    \nu = \chi(W) - 3\sigma(W) -2n(\bar{\zeta}) \mod 48,
    \]
    where $n(\zeta)$ is the number of zeroes of $\zeta$ counted with signs.
\end{definition}

This quantity is invariant under the deformations of the $G_2$-structure.

\begin{definition}[{\cite[Definition 1.6]{Crowley2025}}]
	Let $(g,\zeta)$ be a $G_2$-structure on the closed manifold $M^7$. Let $g^{SM}$ be the metric on the spinor bundle $SM$ and $\nabla^{SM}$ be the connection on $SM$ induced by the Levi-Civita connection on $TM$. The $\bar{\nu}$ invariant is defined as:
\begin{equation*}
	\bar{\nu}(\varphi) = 2\int_M \zeta^*\psi(\nabla^{SM},g^{SM}) - 24\eta(D_M) + 3\eta(B_M).
\end{equation*}

Here $\psi \in \mathcal{D}_7(SM)$ is the Mathai-Quillen current on the bundle $SM$ \cite{Bismut1992}, $D_M$ is the Dirac operator, and $B_M$ is the odd signature operator.

\end{definition}

This expression is invariant under the deformations of the $G_2$-structures preserving positive scalar curvature. Moreover, in \cite{Crowley2025}, it was proven that
\begin{equation*}
	\bar{\nu}(\varphi) + 24\dim \ker(D_M) \equiv \nu(\varphi) \mod 48.
\end{equation*}

\subsection{Mathai-Quillen current}
\label{MQcur}
We recall some of the properties of the Mathai-Quillen current that needed in the proof of Theorem \ref{thm:xiintr} and Section \ref{sect3}, for more detailed discussion one can use \cite{Bismut1992}.

\begin{lemma}[{\cite[Theorem 3.7]{Bismut1992}}]
Let $\nabla^1$, $\nabla^2$ be two metric connections on the vector bundle $SM$. The Mathai-Quillen current satisfies the following transgression formula:

\begin{equation}
	\label{eq:trans}
	\psi(\nabla^1,g) - \psi(\nabla^2,g) = \pi^*\widetilde{e}(\nabla^2,\nabla^1, g) \text{ \textnormal{modulo exact currents,}}
\end{equation}
where $\widetilde{e}(\nabla^2,\nabla^1, g) \in \Omega^7(M,\mathbb{R})$ is the second characteristic form associated to the Euler class. 
\end{lemma}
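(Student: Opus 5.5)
The plan is to reproduce the Bismut--Zhang argument from the explicit construction of $\psi$ as an integral of transgressed Thom forms. Write $E = SM$, $\pi\colon E\to M$, $n=\operatorname{rk}_{\mathbb R}E=8$, and let $x$ be the tautological section of $\pi^*E$. For a metric connection $\nabla$ and $T>0$, I will use the Mathai--Quillen Thom form
\[
\Phi_T(\nabla) = \pi^{-n/2}\int^B \exp\bigl(-T|x|^2 - \sqrt{T}\,\nabla x - \tfrac12 R^{\nabla}\bigr),
\]
where $\int^B$ is the Berezin integral. This form is closed and satisfies $\Phi_T(\nabla)\to\pi^*e(\nabla)$ as $T\to 0$ and $\Phi_T(\nabla)\to\delta_{M}$ (the current of the zero section) as $T\to\infty$. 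Differentiating in $T$ gives $\partial_T\Phi_T = -d\,\alpha_T(\nabla)$ for an explicit odd form $\alpha_T$. The current is then $\psi(\nabla,g)=\int_0^\infty \alpha_T(\nabla)\,dT$. The integrand is smooth off the zero section, and near it the integral is locally integrable (homogeneous of degree $-(n-1)$ in $x$). This yields $d\psi = \pi^*e(\nabla) - \delta_M$.

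Next, join the connections by $\nabla_s = (1-s)\nabla^2 + s\nabla^1$, $s\in[0,1]$, all of which are metric. Regard $\nabla_s$ as a connection $\widehat\nabla$ on the pullback of $E$ to $M\times[0,1]$; it has no $ds$-component. Apply the same construction on $E\times[0,1]$ to get a closed family $\Phi_T(\widehat\nabla)$ and a current $\widehat\psi=\int_0^\infty\alpha_T(\widehat\nabla)\,dT$ satisfying
\[
d\widehat\psi = \pi^*e(\widehat\nabla)-\delta_{M\times[0,1]}.
\]
Integrating this identity over the fibre $[0,1]$ and using Stokes' formula
\[
\int_{[0,1]} d(\cdot) = d\int_{[0,1]}(\cdot) \pm \bigl[(\cdot)|_{s=1}-(\cdot)|_{s=0}\bigr]
\]
gives three contributions:
\begin{itemize}
\item The $s$-boundary terms give $\pm(\psi(\nabla^1)-\psi(\nabla^2))$.
\item $\int_{[0,1]}\pi^*e(\widehat\nabla)=\pi^*\widetilde e(\nabla^2,\nabla^1,g)$, which is exactly the Chern--Simons-type secondary Euler form with $d\widetilde e=e(\nabla^1)-e(\nabla^2)$.
\item $\int_{[0,1]}\delta_{M\times[0,1]}=0$, since the zero-section current carries no $ds$-component.
\end{itemize}
What remains is $d\bigl(\int_{[0,1]}\widehat\psi\bigr)$, which is an exact current. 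With signs fixed as in \eqref{eq:trans}, this gives the claimed formula modulo exact currents. A sanity check is that applying $d$ to both sides is consistent: $\pi^*(e(\nabla^1)-e(\nabla^2))=\pi^*d\widetilde e$.

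The main obstacle is analytic. I have to justify exchanging the $T$-integral with both $d$ and the $s$-integration, and show that the currents obtained as limits are legitimate, uniformly in $s\in[0,1]$. I would handle this by the standard estimates. For $T\le 1$, $\alpha_T$ is smooth and bounded. For large $T$, after the rescaling $x\mapsto x/\sqrt T$, the forms $\alpha_T$ are dominated by $C\,T^{-1/2}e^{-cT|x|^2}$ times polynomials in $\sqrt T x$. These bounds are uniform in $s$ because $[0,1]$ is compact and the curvatures $R^{\nabla_s}$ are uniformly bounded. They give $L^1_{\mathrm{loc}}$ convergence of $\int_0^\infty\alpha_T\,dT$, uniformly in $s$. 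The $T\to\infty$ limit produces $\delta$-currents that are independent of the connection, so no extra term appears. The $T\to 0$ end produces only the Chern--Weil forms $\pi^*e(\nabla_s)$, whose $s$-integral is the $\widetilde e$ term above.
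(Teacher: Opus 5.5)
Your argument is correct, and it is essentially the standard Bismut--Zhang proof; the paper gives no proof of its own and simply cites that result. The steps match: define $\psi=\int_0^\infty\alpha_T\,dT$, run the construction for the family $\nabla_s$ over $M\times[0,1]$, and fibre-integrate over $[0,1]$, so that the zero-section current drops out and the $ds$-part of $e(\widehat\nabla)$ gives $\widetilde e$.

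The only inaccuracy is your large-$T$ bound. A component of $\alpha_T$ with $m$ vertical differentials $dx$ has coefficient of size $T^{(m-2)/2}P(\sqrt{T}|x|)e^{-T|x|^2}$, not $T^{-1/2}P(\sqrt{T}|x|)e^{-T|x|^2}$. Integrating in $T$ still gives an $O(|x|^{-m})$ singularity with $m\le n-1$, which is exactly the local integrability your earlier homogeneity claim asserts, so the argument goes through.
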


We will also need the following lemma 
\begin{lemma}[{\cite[Lemma 1.3]{Crowley2025}}]
	Let $\zeta \in \Gamma(SM)$. If $\zeta$ is parallel with respect to $\nabla$, then
	\[
		\zeta^*\psi(\nabla, g) = 0.
	\]
	\label{lem:red}
\end{lemma}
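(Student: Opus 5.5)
The idea is to write the Mathai--Quillen current explicitly in terms of the tautological section and its covariant derivative, and then check that every term in the pullback $\zeta^*\psi(\nabla,g)$ contains at least one factor of $\nabla\zeta$. Parallelism then kills every term.

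First I will recall the construction from \cite{Bismut1992}. The spinor bundle $SM$ has real rank $8$, since $\dim M=7$. On the total space $\pi\colon SM\to M$, minus the zero section, we have the tautological section $v$ of $\pi^*SM$ and its covariant derivative $\nabla^{\pi^*}v\in\Omega^1(SM\setminus 0,\pi^*SM)$. The current $\psi(\nabla,g)$ is a Berezin integral, integrated over an auxiliary variable $t$, of an expression of the form
\[
\frac{v}{|v|}\,\exp\!\Big(-t^2|v|^2-\tfrac{t}{\sqrt2}\,\nabla^{\pi^*}v+\tfrac12\pi^*R^{\nabla}\Big).
\]
Here $v$ and $\nabla^{\pi^*}v$ are regarded as elements of degree $1$ in the exterior algebra $\Lambda(\pi^*SM^*)$, and $R^\nabla$ is regarded as an element of degree $2$ there. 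Because $\zeta$ is a unit spinor, it never meets the zero section. Hence $\zeta^*\psi$ is a smooth $7$-form on $M$, obtained by substituting $v=\zeta$ and $\nabla^{\pi^*}v=\nabla\zeta$ into the integrand.

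The key step is a bidegree count. Take a monomial in the expansion that contains $a$ factors of $\nabla v$ and $b$ factors of $R^\nabla$. Its form degree on the base is $a+2b$. Its fibre (Berezin) degree is $1+a+2b$, where the $1$ comes from $v$. The Berezin integral keeps only the top fibre degree $8$, so $a+2b=7$. In particular $a$ is odd, so $a\ge1$. Equivalently, $e(\nabla)$-type terms built from curvature alone cannot occur in odd degree. Consequently every monomial of $\zeta^*\psi(\nabla,g)$ contains the factor $\zeta^*(\nabla^{\pi^*}v)=\nabla\zeta$. If $\nabla\zeta=0$, every term vanishes, and so $\zeta^*\psi(\nabla,g)=0$.

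The main obstacle is pinning down normalisations and the precise form of $\psi$, in particular the $\int_0^\infty dt$ and the factor $v/|v|$ versus $v$. The degree argument, however, is insensitive to constants. The only thing to verify is that $\psi$ is genuinely a polynomial in $v/|v|$, $\nabla v$ and $R^\nabla$ with the grading described above. If that bookkeeping proves awkward, I will fall back on naturality. A parallel $\zeta$ gives a $\nabla$-invariant splitting $SM=\langle\zeta\rangle\oplus\zeta^\perp$ with $\nabla=d\oplus\nabla'$. Under this splitting $\zeta$ is the pullback of the constant section $1$ of a trivial line bundle with the trivial connection. Functoriality of $\psi$ under direct sums and pullbacks then reduces the claim to the statement that the current restricted to a constant section of a flat trivial summand has no component of positive degree. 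This is again the observation that a nonzero term needs a factor $\nabla v$.
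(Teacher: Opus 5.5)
Your argument is correct and is the standard one behind \cite[Lemma 1.3]{Crowley2025}, which the paper quotes without giving a proof of its own. Since $SM$ has even rank $8$, every monomial that survives the Berezin integral satisfies $1+a+2b=8$, so it contains an odd number $a\geq 1$ of factors $\nabla^{\pi^*}v$, and each of these pulls back to $\nabla\zeta=0$. Two small remarks: you only need $\zeta\neq 0$, not $|\zeta|=1$, because a parallel section has constant norm and so never meets the zero section; and the direct-sum fallback is unnecessary, and imprecise as phrased, since the factor $\hat{v}$ makes $\psi$ non-multiplicative under direct sums.
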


\subsection{\texorpdfstring{The $s$ invariant}{The s invariant}}
\label{sinv}
In this section we briefly discuss the $s$ invariant defined in the \cite{Kreck1988}. It was used to distinguish different connected components of the moduli space of the metrics with positive sectional (scalar) curvature. Our discussion of $s$ is restricted to dimension 7. 

The $s$ invariant in dimension $7$ is defined as:
\[
	s = \frac{\eta(B)}{2^5\cdot 7} + \frac{\eta(D)+h(D)}{2} - \frac{1}{2^7\cdot 7} \int_M p_1\wedge h.
\]

This expression reduces to the Eells-Kuiper invariant $\mu$ modulo $\mathbb{Z}$:

\[
	s \equiv \mu = \frac{p_W^2 - \sigma(W)}{2^5\cdot 7} \in \mathbb{Q}/\mathbb{Z},
\]
where $W$ is an $8$-dimensional coboundary of $M$, i.e. $M = \partial W$, and $p_W = \frac{p_1(W)}{2}$ is a spin characteristic class. 

The Eells-Kuiper invariant is a diffeomorphism invariant of the manifold, and so it doesn't depend on the choice of the coboundary. On the other hand, the $s$ invariant depends on the metric and can jump by an integer under deformations of the metric that leave connected component of metrics of positive scalar curvature. Note, that this invariant depends only on the metric and does not detect $G_2$-structures.

\subsection{\texorpdfstring{The $\xi$ invariant}{The xi invariant}}

In \cite{Crowley2015} the authors defined the $\xi$ invariant for the $G_2$-structures on arbitrary spin 7-manifold. In the case when the manifold $M$ has $p_1(M)$ rationally trivial this invariant can be defined in the following way. 
\begin{definition}
	Let $(M,\varphi)$ be a 7-manifold with $G_2$-structure, let $\zeta$ be the corresponding unit spinor. Let $W$ be its spin coboundary and let $\bar{\zeta}$ be the section of $\Sigma^+W$, such that its restriction to $M$ is $\zeta$. Additionally, assume that $p_1(M) = 0 \in H^4(M,\mathbb{Q})$. Then $\xi$ is defined as:
\begin{align*}
	\xi(\varphi) &= 7\left(\chi(W) - 3\sigma(W) - 2n(\bar{\zeta})\right) + 3\cdot \frac{p_W^2 - \sigma(W)}{2}\\
		     &= 7\chi(W) - \frac{45}{2}\sigma(W) - 14 n(\bar{\zeta}) + \frac{3}{2}p_W^2 \in \mathbb{Q},
\end{align*}
where $p_W = \frac{p_1(W)}{2}$ is the spin characteristic class.
\end{definition}

Similarly to the $\nu$ invariant in the case when $p_1 = 0 \in H^4(M,\mathbb{Q})$ the $\xi$ invariant admits an intrinsic formula.

\begin{proof}[Proof of Theorem \ref{thm:xiintr}]
	We rewrite the terms $\xi(W) - 3\sigma(W) - 2n(\bar{s})$ and $p_W^2 - \sigma(W)$ using the following relations and then combine them together to obtain an intrinsic formula for the $\xi$ invariant.
\begin{align}
	\label{ref:1}
	L &= \frac{1}{45}(7p_2 - p_1^2)\\
	\label{ref:2}
	\sigma(W) &= \int_W L - \eta(B_M)\\
	\label{ref:3}
	\widehat{A} &= \frac{1}{5760}(7p_1^2 - 4p_2)\\
	\label{ref:4}
	\ind(D^+_W) &= \int_W \widehat{A} - \frac{\eta(D)+h(D)}{2}\\
	\label{ref:5}
	n(\bar{\zeta}) &= \int_W e(\nabla^{S^+}) - \int_M \zeta^*\psi \\
	\label{ref:6}
	2 \int_W e(\nabla^{S^+}) &= 48 \int_W\widehat{A} + \int_W e(\nabla^{TW}) - 3\int_W L
\end{align}

Repeating the argument from \cite{Kreck1988}, let $h(M) \in \Omega^3(M)$ such that $dh = p_1(M)$ (It exists because $H^4(M,\mathbb{Q}) = 0$). Extend $h(M)$ to a 3-form on $W$ and denote this form by $h(W)$.
\begin{equation*}
	\begin{aligned}
		\langle p_1^2(W), [W,\partial W] \rangle &= \langle (p_1(W)-dh(W))\cup p_1(W), [W,\partial W] \rangle \\
							 &=\int_W (p_1(W) - dh(W)) \wedge p_1(W) \\
							 &= \int_W p_1^2 - \int dh(W) \wedge p_1(W)\\
							 &=\int_W p_1^2 - \int_M h(M) \wedge p_1(W).
	\end{aligned}
\end{equation*}

Hence, we have:
\begin{align}
	p_W^2 &= \frac{1}{4}\int_W p_1^2 - \frac{1}{4} \int_M p_1(M)\wedge h(M)
	\label{ref:8}
\end{align}

Now, we compute both parts constituting the $\xi$ invariant separately. 

Following the computation from \cite[Theorem 1.2]{Crowley2025} we obtain:

\begin{align*}
\chi(W) - 3\sigma(W) - 2n(\bar{\zeta}) &\overset{\eqref{ref:5}}{=} \chi(W) - 3\sigma(W) - 2\int_W e(\nabla^{S^+}) + 2\int_M \zeta^* \psi\\
					   &\overset{\eqref{ref:6}}{=} 3\int_W L -48\int_W \widehat{A} -3\sigma(W) + 2 \int_M \zeta^* \psi\\
					   &\overset{\eqref{ref:2}}{=} 2\int_M \zeta^*\psi + 3\eta(B_M) - 48 \int_W \widehat{A}
\end{align*}

Similarly, following the approach from \cite[Proposition 2.1]{Kreck1988}:
\begin{align*}
	p_W^2 - \sigma(W) &\overset{\eqref{ref:8}}{=} \frac{1}{4}\int_W p_1^2(W) - \frac{1}{4}\int_M p_1(\nabla^{LC})\wedge h(\nabla^{LC}) - \sigma(W) \\
			  &\overset{\eqref{ref:2}}{=} \frac{1}{4}\int_W p_1^2(W) - \frac{1}{4}\int_M p_1(\nabla^{LC})\wedge h(\nabla^{LC}) - \int_W L + \eta(B_M)\\
			  &\overset{\eqref{ref:1}}{=} \frac{1}{4}\int_W p_1^2(W) - \frac{1}{4}\int_M p_1(\nabla^{LC})\wedge h(\nabla^{LC}) + \frac{1}{45} \int_W p_1^2 -\frac{7}{45} \int p_2+ \eta(B_M)\\
			  &\overset{\eqref{ref:3}}{=} \frac{49}{4\cdot 45}\int_W p_1^2(W) - \frac{1}{4}\int_M p_1(\nabla^{LC})\wedge h(\nabla^{LC}) - \frac{7}{45} \left(\frac{7}{4} \int_W p_1^2(W) - 1440 \int_W \widehat{A}\right)+ \eta(B_M)\\
			  &= -\frac{1}{4}\int_M p_1(\nabla^{LC})\wedge h(\nabla^{LC}) + 7\cdot 32 \int_W \widehat{A} + \eta(B_M) 
\end{align*}

Combining the two formulas we have:

\begin{align*}
	\xi(\varphi) &= 7 \cdot \left(2\int_M \zeta^*\psi + 3\eta(B_M) - 48 \int_W \widehat{A}\right) + \frac{3}{2} \cdot \left(-\frac{1}{4}\int_M p_1(\nabla^{LC})\wedge h(\nabla^{LC}) + 7\cdot 32 \int_W \widehat{A}+ \eta(B_M)\right)\\
		     &= 7\cdot \left(2\int_M \zeta^*\psi + 3\eta(B_M)\right) + \frac{3}{2} \cdot \left(-\frac{1}{4}\int_M p_1(\nabla^{LC})\wedge h(\nabla^{LC})+ \eta(B_M)\right) - 7\cdot 48 \int_W \widehat{A} + 7 \cdot 48 \int_W \widehat{A}\\
		     &= 7\cdot \left(2\int_M \zeta^*\psi + 3\eta(B_M)\right) + \frac{3}{2} \cdot \left(-\frac{1}{4}\int_M p_1(\nabla^{LC})\wedge h(\nabla^{LC})+ \eta(B_M)\right)\\  
		     &= 14 \int_M \zeta^* \psi + \frac{45}{2} \eta(B_M) - \frac{3}{8} \int_M p_1(\nabla^{LC}) \wedge h(\nabla^{LC}).
\end{align*}
\end{proof}

This formula can be rewritten as: 
\[
	\xi = 7\bar{\nu} + 7h(D) + 12\cdot 28s,
\]
where $s$ is the invariant from the Section \ref{sinv}.

\begin{remark}
Note that both $\bar{\nu}$ and $s$ were only invariant up to deformations within the class of metrics with positive scalar curvature, because they included terms $\eta(D) + h(D)$, which can jump by an integer. However, if we considered $\bar{\nu} \mod 48$ or $s \mod \mathbb{Z}$, we would get genuine invariants of $G_2$-structure and smooth structure, respectively. By contrast, the formula for the $\xi$ invariant that we derived is already a genuine invariant of $G_2$-structures, since the terms $\eta(D) + h(D)$ from $\bar{\nu}$ and $s$ cancel out.
\end{remark}
\begin{remark}
	\label{rm:neginv}
    For all of these invariants we have:
    \begin{align}
          \nu(-\varphi) &= -\nu(\varphi),\\
	  \bar{\nu}(-\varphi) &= -\bar{\nu}(\varphi),\\
	  \xi(-\varphi) &= -\xi(\varphi).
    \end{align}
\end{remark}

\subsection{\texorpdfstring{Homotopy classes of $G_2$-structures}{Homotopy classes of G2-structures}}
\label{sect:homcl}
Let $\mathcal{G}_2$ denote the space of $G_2$-structures compatible with the chosen spin structure on $M$. By \cite[Lemma 1.1]{Crowley2025}, the set of connected components of $\mathcal{G}_2$ $\pi_0\mathcal{G}_2$ can be identified with $\mathbb{Z}$. The group of spin-diffeomorphisms $\Diff(M)$ acts on $\mathcal{G}_2$ by pull-back, and we denote the quotient $\bar{\mathcal{G}}_2 = \mathcal{G}_2/\Diff(M)$. The set $\pi_0 \bar{\mathcal{G}}_2 = \pi_0\mathcal{G}_2/ \pi_0 \Diff(M)$ is called the set of deformation classes of $G_2$-structures on $M$.

When $p_1(M)$ is rationally trivial the diffeomorphism group acts trivially on $\mathcal{G}_2$ and one has the following theorem:

\begin{theorem}[{\cite[Theorem 1.11]{Crowley2025}}]
	If $p_1(M) = 0 \in H^4(M,\mathbb{Q})$, then $\pi_0 \mathcal{G}_2 = \pi_0 \bar{\mathcal{G}}_2 = \mathbb{Z}$.
\end{theorem}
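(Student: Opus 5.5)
The statement says that when $p_1(M)$ is rationally trivial, the diffeomorphism group acts trivially on $\pi_0\mathcal{G}_2\cong\mathbb{Z}$, so that $\pi_0\mathcal{G}_2 = \pi_0\bar{\mathcal{G}}_2 = \mathbb{Z}$. I would derive this from the invariant $\xi$ defined above.

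First, I would identify $\pi_0\mathcal{G}_2$ with $\mathbb{Z}$ as in \cite[Lemma 1.1]{Crowley2025}. Fix the spin structure and metric. A $G_2$-structure is then a nowhere-vanishing spinor $\zeta$ up to scaling. Two such spinors differ, up to homotopy, by a degree in $\pi_7(S^7)$ on the top cell, so the components form a $\mathbb{Z}$-torsor. Changing $\zeta$ by $d$ units in this torsor changes $n(\bar\zeta)$ by $\pm d$ for a fixed coboundary $W$, while $\chi(W)$, $\sigma(W)$ and $p_W^2$ stay the same. Consequently the coboundary formula gives
\[
\xi(\varphi') - \xi(\varphi) = \mp 14 d .
\]
So $\xi$ is an injective function on $\pi_0\mathcal{G}_2$ into $\mathbb{Q}$, and its image is a coset of $14\mathbb{Z}$.

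Second, I would show that $\xi$ is invariant under spin diffeomorphisms $f$. If $W$ is a spin coboundary of $M$ with extension $\bar\zeta$ of $\zeta$, then the same $W$, with its boundary identified with $M$ via $f$, is a spin coboundary for the pair $(M, f^*\zeta)$. Every quantity in the definition is unchanged: $\chi(W)$, $\sigma(W)$, $n(\bar\zeta)$ and $p_W^2$. The term $p_W^2$ is well defined precisely because $p_1(M)$ is rationally trivial, so $p_W$ lifts to a relative class. Alternatively one can use the intrinsic formula of Theorem \ref{thm:xiintr}, whose terms are all natural under isometries $f\colon(M,f^*g)\to(M,g)$. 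Either way, $\xi(f^*\varphi)=\xi(\varphi)$.

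Since $\xi$ is injective on $\pi_0\mathcal{G}_2$, this invariance forces $f^*$ to act as the identity on $\pi_0\mathcal{G}_2$. The quotient map $\pi_0\mathcal{G}_2\to\pi_0\bar{\mathcal{G}}_2$ is therefore a bijection, which gives the theorem.

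The main obstacle is the first step: proving that $\xi$ is well defined, i.e. independent of the choice of $W$ and $\bar\zeta$, and computing exactly how it changes under the $\mathbb{Z}$-action. For independence of $W$, I would glue two coboundaries into a closed spin 8-manifold $X$. There $\chi - 3\sigma - 2n$ equals $-48\hat A(X)$, using the identity \eqref{ref:6}, and the combination $7(\ldots) + \tfrac32(p^2-\sigma)$ cancels the $\hat A$ contributions just as in the proof of Theorem \ref{thm:xiintr}. This cancellation is the delicate point. The change under the $\mathbb{Z}$-action then follows by adding a local zero of index $\pm1$ to $\bar\zeta$.
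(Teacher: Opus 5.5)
Your argument is correct, and it takes a different route from the paper. The paper gives no proof of this statement: it imports it from \cite[Theorem 1.11]{Crowley2025} and then uses it in the \emph{opposite} direction, deducing from triviality of the $\pi_0\Diff(M)$-action that $\xi$ is a complete invariant of deformation classes. You instead derive the theorem from $\xi$. On $\pi_0\mathcal{G}_2$ itself, $\xi$ is injective because the $\mathbb{Z}$-action shifts it by $\mp 14d$, which also re-proves freeness of the torsor structure. It is invariant under spin diffeomorphisms by naturality, so $f^*$ must fix every class.

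The step you single out as the main obstacle is already settled inside the paper. The proof of Theorem \ref{thm:xiintr} works for an arbitrary spin coboundary $W$ (product metric near $M$) and an arbitrary transverse extension $\bar\zeta$. It identifies the coboundary expression with $14\int_M\zeta^*\psi+\frac{45}{2}\eta(B_M)-\frac38\int_M p_1(\nabla^{LC})\wedge h(\nabla^{LC})$, which does not involve $(W,\bar\zeta)$. So independence of the choices, and invariance under spin isometries $f\colon(M,f^*g)\to(M,g)$, both come for free. You can therefore skip the gluing argument, where you would otherwise have to track that on the reversed piece the spinor lands in $S^-$ and local indices change sign.

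What your route does not give is any information when $p_1(M)$ is not rationally trivial, since then $\xi$ is not $\mathbb{Q}$-valued. A more topological alternative computes the translation $t(f)$ by which a spin diffeomorphism acts. A generic family of spinors from $\zeta$ to $f^*\zeta$ defines a section of $S^+T_f$ over the mapping torus with signed zero count $\pm t(f)$. Then \eqref{ref:6}, together with $\chi(T_f)=\sigma(T_f)=0$, gives $t(f)=\pm 24\hat{A}(T_f)$. When $p_1(M)$ is rationally trivial, $p_1(T_f)$ lies in the image of the Wang coboundary $H^3(M;\mathbb{Q})\to H^4(T_f;\mathbb{Q})$, hence squares to zero, so $\hat A(T_f)=(p_1^2-4\sigma)(T_f)/896=0$. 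That version costs the mapping-torus and Wang-sequence bookkeeping, but it shows in general that diffeomorphisms act by translations by multiples of $24$, which is why $\nu$ survives the quotient. Yours is shorter and uses only tools the paper already develops.
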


In particular, in this case $\xi$ is a complete homotopy invariant of $G_2$-structures (cf. \cite[Section 1.6]{Crowley2025}). By contrast, the $\nu$ invariant alone is not enough to determine whether $G_2$-structures are deformation equivalent as will be seen in the case of Aloff--Wallach spaces.

\section{\texorpdfstring{Homogeneous $G_2$-structures}{Homogeneous G2-structures}}
\label{sect3}
In this section we derive formulas for the $\bar{\nu}$, $s$, and $\xi$ invariants of homogeneous $G_2$-structures in terms of representation-theoretic data. The main tool is the Goette's formulas for the $\eta$ invariants of homogeneous spaces.

Throughout the paper we assume that $G$ and $H$ are compact Lie groups.
\subsection{Reductive connection}
\label{nabla0}
Let $G/H$ be a homogeneous space. Any vector bundle $E$ over $G/H$ is of the form $E = G \times_\kappa V$ for some $H$-representation $\kappa : H \to \Aut(V)$ \cite{Goette1999}. Any homogeneous section $\zeta$ of such a bundle can be identified with $H$-equivariant map $\hat{\zeta}: G \to V$ via

\[
	\zeta([g]) = [g,\hat{\zeta}(g)].
\]

The reductive connection is defined as \cite{Goette2009}:
\[
	\widehat{\nabla^0_V\zeta}(g) = \widehat{V}\hat{\zeta}(g) = \frac{d}{dt}\bigg|_{t=0}\hat{\zeta}(ge^{t\widehat{V}}).
\]

\begin{lemma}
	Any homogeneous section $\zeta \in \Gamma(E)$ is parallel with respect to $\nabla^0$.
	\label{lem:redpar}
\end{lemma}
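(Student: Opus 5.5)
Since a homogeneous section $\zeta$ corresponds to an $H$-equivariant map $\hat{\zeta}\colon G\to V$, and the reductive connection acts on $\hat\zeta$ by differentiating along the horizontal lift $\widehat{V}$, the plan is to show directly that $\widehat{V}\hat{\zeta}=0$ for every tangent vector. First I will make precise what \enquote{homogeneous} means in terms of $\hat\zeta$. Here $G$ acts on $E=G\times_\kappa V$ by $g'\cdot[g,v]=[g'g,v]$. A $G$-invariant section satisfies $\zeta([g'g])=g'\cdot\zeta([g])$, which translates into $\hat{\zeta}(g'g)=\hat{\zeta}(g)$ for all $g',g\in G$. Hence $\hat\zeta$ is constant, say $\hat\zeta\equiv v_0$. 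Equivariance, $\hat\zeta(gh)=\kappa(h)^{-1}\hat\zeta(g)$, then forces $v_0\in V^H$ to be an $H$-fixed vector. Conversely, any $v_0\in V^H$ defines such a section, so homogeneous sections are exactly those with constant $\hat\zeta$.

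Second, I will recall the ingredients of the definition of $\nabla^0$. We fix a reductive splitting $\mathfrak g=\mathfrak h\oplus\mathfrak m$ with $\mathfrak m$ being $\operatorname{Ad}_H$-invariant. A tangent vector $V$ at $[g]$ is represented by a unique $\widehat V\in\mathfrak m$, namely $V=\frac{d}{dt}\big|_{t=0}[ge^{t\widehat V}]$, and $\widehat{\nabla^0_V\zeta}(g)=\frac{d}{dt}\big|_{t=0}\hat\zeta(ge^{t\widehat V})$. Since $\hat\zeta$ is constant, the curve $t\mapsto\hat\zeta(ge^{t\widehat V})$ is constant equal to $v_0$, so its derivative vanishes. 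Therefore $\widehat{\nabla^0_V\zeta}\equiv0$, which means $\nabla^0\zeta=0$.

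The only subtle point is the first step. I need to be careful that \enquote{homogeneous section} means $G$-invariant, so that left translations act trivially on $\hat\zeta$, rather than merely $H$-equivariant. Every section is $H$-equivariant, and for a non-invariant section the derivative along $\mathfrak m$ need not vanish. I also need to check that the defining formula is independent of the representative $g$ of $[g]$. This follows from $\operatorname{Ad}_H$-invariance of $\mathfrak m$ and equivariance of $\hat\zeta$, but it is not really needed here, because the derivative is zero for every representative.
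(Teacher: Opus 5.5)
Your proof is correct and follows essentially the same route as the paper: $G$-invariance of $\zeta$ forces $\hat\zeta$ to be constant on $G$, so its derivative along $t\mapsto ge^{t\widehat V}$ vanishes and $\nabla^0\zeta=0$. Your extra observation that the constant value lies in $V^H$ is exactly the content of the paper's subsequent remark identifying $\nabla^0$-parallel sections with $H$-fixed vectors.
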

\begin{proof}%[Proof of Lemma \ref{lem:redpar}]
	Section $\zeta$ is homogeneous if and only if it is $G$-invariant, that is,

\[
	l_g\zeta = \zeta \ \forall g \in G.
\] 

Equivalently,
\[
	\hat{\zeta}(g_0) = \widehat{l_g\zeta}(g_0) = \hat{\zeta}(g^{-1}g_0) \ \forall g \in G.
\]
	Hence, $\hat{\zeta}$ is constant on $G$, therefore 
\[
	\nabla^0\zeta = 0.
\]
\end{proof}

\begin{remark}
	\label{rm:homsections}
	The space of sections parallel with respect to $\nabla^0$ can be identified with the subspace $V^H \subset V$ consisting of vectors fixed by $H$ action.
\end{remark}

\begin{lemma}
All of the homogeneous $G_2$-structures on $G/H$ inducing the same orientation are homotopic.
\label{rmk:homstr}
\end{lemma}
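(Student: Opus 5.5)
The plan is to parametrise homogeneous $G_2$-structures on $G/H$ with a fixed orientation by linear-algebraic data on the isotropy representation, and to show this parameter space is connected up to an irrelevant sign.

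\emph{Step 1 (reduction to linear algebra).} Fix a reductive decomposition $\mathfrak g=\mathfrak h\oplus\mathfrak m$. A homogeneous $G_2$-structure is equivalent to a pair $(g,\zeta)$ where:
\begin{itemize}
\item $g$ is an $\operatorname{Ad}(H)$-invariant inner product on $\mathfrak m$;
\item $\zeta$ is a unit spinor which, by Remark~\ref{rm:homsections}, is identified with an $H$-fixed unit vector in the spin representation $S_g$ of $(\mathfrak m,g)$ with the fixed orientation.
\end{itemize}
Here the isotropy representation is lifted to $\mathrm{Spin}(\mathfrak m,g)$ according to the fixed spin structure on $G/H$. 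The $3$-form is recovered as $\varphi(x,y,z)=\langle x\cdot y\cdot z\cdot\zeta,\zeta\rangle$, which is quadratic in $\zeta$. Hence $\zeta$ and $-\zeta$ define the same $\varphi$.

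\emph{Step 2 (connecting the metrics).} The set of $\operatorname{Ad}(H)$-invariant inner products on $\mathfrak m$ is convex, so any two such metrics $g_0,g_1$ are joined by $g_t=(1-t)g_0+tg_1$. Each $g_t$ induces a homogeneous metric.

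To transport spinors along this path, I would use the Bourguignon--Gauduchon identification. Let $A_t$ be the $g_0$-symmetric positive square root of $g_0^{-1}g_t$. Then $A_t$ is an isometry $(\mathfrak m,g_t)\to(\mathfrak m,g_0)$, it preserves orientation, and it commutes with $\operatorname{Ad}(H)$, since it is a function of an $H$-equivariant operator. It therefore lifts to an $H$-equivariant isometry $S_{g_t}\cong S_{g_0}$ depending continuously on $t$, and this isometry maps $S_{g_t}^H$ onto $S_{g_0}^H$.

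Transporting a fixed $H$-invariant unit spinor along this identification gives a continuous path of homogeneous $G_2$-structures from $(g_0,\zeta_0)$ to $(g_1,\zeta')$ for some $\zeta'\in S_{g_1}^H$. This reduces the statement to the case of a fixed metric.

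\emph{Step 3 (fixed metric).} For fixed $g$, the admissible spinors form the unit sphere in the real vector space $S_g^H$, which is nonzero because a structure exists.
\begin{itemize}
\item If $\dim S_g^H\ge 2$, the sphere is connected, so any two invariant unit spinors are joined by a path of invariant unit spinors.
\item If $\dim S_g^H=1$, the only choices are $\pm\zeta$, and these give the same $\varphi$ by Step 1.
\end{itemize}
Concatenating the paths from Steps 2 and 3 gives the homotopy, which moreover stays inside homogeneous structures.

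\emph{Main obstacle.} I expect the delicate point to be Step 2: checking that the identification of spinor bundles is genuinely $H$-equivariant and compatible with the fixed lift of the isotropy representation to $\mathrm{Spin}$. If $H$ is disconnected, one must also ensure that $G_2$-structures coming from different spin lifts are not being compared; I would handle this by fixing the spin structure throughout, as in the definition of $\mathcal G_2$.
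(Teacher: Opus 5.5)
Your argument is correct, and it takes a different route from the paper's proof in two places.

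\textbf{The case $\dim S^H=1$.} The paper does not use the fact that $\zeta$ and $-\zeta$ give the same $3$-form. It treats $\pm\zeta$ as possibly distinct structures. It then uses the splitting $SM\cong\underline{\mathbb{R}}\oplus TM$ and the vanishing of the Euler class of $TM$ to find a trivial oriented $2$-plane subbundle $K\subset SM$ containing $\zeta$. Rotating inside $K$ joins $\zeta$ to $-\zeta$, and the paper notes that this path generally leaves the homogeneous structures.

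Your observation that $\varphi$ is quadratic in $\zeta$ makes that step unnecessary under the paper's own definition of a $G_2$-structure as a $3$-form. It also gives a stronger conclusion: the homotopy can be taken through homogeneous $G_2$-structures the whole way. The paper's rotation only matters if you regard signed unit spinors as distinct structures. At the level of $3$-forms it is a loop based at $\varphi$, not a path between two different forms.

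\textbf{Identifying spinors across metrics.} The paper writes the parameter space as the product $\mathcal{P}(T)^H\times\mathbb{S}(S^H)$. This silently identifies the spinor modules for different invariant metrics. Your $H$-equivariant square root $A_t$ makes that identification explicit. Equivalently, your path is just $\varphi_t=A_t^*\varphi_0$, which is visibly $\operatorname{Ad}(H)$-invariant and induces $g_t$.

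Your remark about keeping the spin lift fixed when $H$ is disconnected also addresses a point the paper leaves implicit. Homogeneous $G_2$-structures inducing different spin structures could not be homotopic, so the statement implicitly assumes a fixed spin structure.
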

\begin{proof}%[Proof of Lemma \ref{rmk:homstr}]
	Let $(S,\widetilde{\pi})$ and $(T,\kappa)$ denote the $H$-representations associated to the spinor bundle $SM$ and tangent bundle $TM$, respectively.

	A homogeneous $G_2$-structure is determined by a homogeneous Riemannian metric together with a homogeneous unit spinor. By Remark \ref{rm:homsections}, homogeneous sections correspond precisely to $H$-invariant vectors. Consequently, homogeneous $G_2$-structures inducing a fixed orientation are in one-to-one correspondence with pairs
\[
(g,\zeta)\in \mathcal{P}(T)^H\times \mathbb{S}(S^H),
\]
where $\mathcal{P}(T^H)\subset (\odot^2(T^*))^H$ denotes the cone of positive-definite symmetric bilinear forms and $\mathbb{S}(S^H)$ is the unit sphere in $S^H$.

Since $\mathcal{P}(T)^H$ is convex, any two homogeneous metrics can be joined by a path of homogeneous metrics. If $\dim S^H > 1$, then the sphere $\mathbb{S}(S^H)$ is connected, and therefore any two homogeneous unit spinors can also be joined by a path. Combining these paths yields a homotopy between the corresponding $G_2$-structures.

It remains to consider the case $\dim S^H = 1$. In this case $\mathbb{S}(S^H)=\{\pm \zeta\}$. Following \cite{Crowley2025}, the spinor $\zeta$ determines a splitting
\[
SM \cong \underline{\mathbb{R}}\oplus TM.
\]
Since the Euler class of an oriented $7$-manifold vanishes, the spinor bundle contains a trivial oriented $2$-plane bundle $K\subset SM$ containing $\zeta$. Rotating within $K$ gives a path from $\zeta$ to $-\zeta$, and hence a homotopy between the corresponding $G_2$-structures. In general, this homotopy need not remain within the space of homogeneous $G_2$-structures.
\end{proof}
\subsection{\texorpdfstring{The $\eta$ invariants of homogeneous spaces}{The eta invariants of homogeneous spaces}}
\label{etainv}
To compute the $\eta$ invariants, we use the following results:
\begin{theorem}[{\cite[Theorems 2.33, 2.34]{Goette2009}}]
	Let $G/H$ be a homogeneous space with the normal metric $g$. Then, the following formulas for the Dirac operator $D$ and the odd signature operator $B$ hold:
\begin{equation}
	\label{eq:etaD}
	\eta(D) = I_D + 2\int_M \widetilde{\widehat{\mathrm{A}}}(\nabla^0,\nabla^{TM}) + J_D.
\end{equation}

\begin{equation}
	\label{eq:etaB}
	\eta(B) = I_B + \int_M \widetilde{L}(\nabla^0,\nabla^{TM}) + J_B.
\end{equation}

Here, $\widetilde{\widehat{\mathrm{A}}}$ and $\widetilde{L}$ are the secondary characteristic forms of the $\widehat{\mathrm{A}}$-genus and $L$-genus. 

The terms $I$ and $J$ depend purely on the representation-theoretic data of $G/H$ and are explained in section \ref{sect5}.
\end{theorem}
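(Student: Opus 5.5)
Since this is Goette's theorem, the plan is to reconstruct its proof in outline rather than derive anything new. The strategy is to \emph{interpolate}. On $M=G/H$ with the normal metric, consider the one-parameter family of metric connections
\[
	\nabla^t = \nabla^0 + t\,(\nabla^{TM}-\nabla^0), \qquad t\in[0,1],
\]
each of which has totally skew torsion proportional to $(1-t)$ times the Lie bracket projected to $\mathfrak m=\mathfrak h^\perp$. Each $\nabla^t$ lifts to the spinor bundle (and to $\Lambda^\bullet T^*M$) and defines a Dirac-type operator $D^t$ (resp.\ $B^t$). The endpoint $t=1$ gives the operators $D$ and $B$ of the statement. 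The plan is to compute $\eta$ at a distinguished intermediate value of $t$ purely algebraically, and then transport the answer to $t=1$.

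\textbf{Step 1: the algebraic endpoint.} At $t=\tfrac13$ the operator $D^{1/3}$ is Kostant's cubic Dirac operator, acting on $L^2(G)\otimes S$ restricted to $H$-invariants. Its square is given by the Parthasarathy-type formula
\[
	(D^{1/3})^2 = \Omega_G - \Omega_H + c,
\]
where $\Omega_G,\Omega_H$ are Casimir operators and $c$ is an explicit constant. Decompose $L^2(G)=\bigoplus_\gamma V_\gamma\otimes V_\gamma^*$ by Peter--Weyl. On each isotypic piece, the spectrum of $D^{1/3}$, with signs, is controlled by Kostant's result. It is given in terms of the characters of the $H$-representations appearing in $V_\gamma|_H\otimes S$, and it pairs off except on finitely many $\mathfrak h$-highest weight multiplets. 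Hence $\eta(D^{1/3})$ becomes a finite representation-theoretic sum, obtained for instance via a Weyl-character/residue computation of $\sum \sign(\lambda)|\lambda|^{-s}$ at $s=0$. This is the term $I_D$. The same procedure applied to the twisted signature-type operator gives $I_B$.

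\textbf{Step 2: transport from $t=\tfrac13$ to $t=1$.} Along the path, the variation formula for $\eta$ invariants (APS, or Bismut--Cheeger) says that $\eta(D^t)$ changes by the integral of the transgression of the local index density, plus twice the spectral flow. The local density is $\widehat{A}$ for $D$ and $L$ for $B$; the extra factor $2$ in the $D$ formula reflects the normalization of the spin transgression versus the $\eta$ convention. Since $\widetilde{\widehat A}$ is additive along concatenated paths, I would also move the base point from $\nabla^{1/3}$ to $\nabla^0$. The discrepancy $\widetilde{\widehat A}(\nabla^0,\nabla^{1/3})$ is a $G$-invariant form built from the torsion, and its integral can again be evaluated algebraically. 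It gets absorbed, together with any spectral flow, into the constant $J_D$ (resp.\ $J_B$). This yields exactly the shape
\[
	\eta(D) = I_D + 2\int_M\widetilde{\widehat A}(\nabla^0,\nabla^{TM}) + J_D,
\]
and likewise for $B$ with $\widetilde L$.

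\textbf{Main obstacle.} The hard step is controlling the kernels, i.e.\ the spectral flow and the $h(D^t)$ jumps, along the path, together with the exact combinatorics of Step 1. For the flow, I would use positivity. For a normal metric, $(D^t)^2 - \tfrac14\mathrm{scal}$ is given by a Lichnerowicz-type identity whose torsion term has a definite sign on each $G$-isotypic piece. Since the Casimir eigenvalues are explicit, one can check directly on each representation $V_\gamma$ where eigenvalues of $D^t$ can cross zero. Only finitely many $\gamma$ can contribute, and their crossings are recorded in $J$. For $B$ one must additionally handle harmonic forms, i.e.\ $H$-invariants in $\Lambda^\bullet\mathfrak m^*$ paired with the trivial representation. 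I expect this bookkeeping, rather than the transgression formula itself, to be the delicate part of the argument.
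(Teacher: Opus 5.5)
Your overall architecture matches the structure encoded in Section~\ref{sect5}: an algebraic endpoint at Kostant's operator, transport to $\nabla^{TM}$, and isotypic control of zero crossings via Casimir bounds. In that section $J_D=\sum_\gamma\chi^\gamma_G(\eta({}^\gamma D)-(\eta+h)({}^\gamma\widetilde D))$ is exactly the integer count of sign changes along $D^\lambda$, $\lambda\in[\frac13,\frac12]$. But Step~2 rests on a false variation formula, and Step~1 mischaracterizes $\eta(\widetilde D)$.

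\textbf{Step 2.} You assume that the $\eta$-variation along the Dirac operators of $\nabla^t$ is the transgression of $\widehat{\mathrm A}(\nabla^t)$. For connections with skew torsion this is wrong. In the paper's parametrization ${}^\gamma D^\lambda=\sum_i c_i(\gamma_i+\lambda\widetilde{\ad}_{\mathfrak m,i})$, which is the Dirac operator of $\nabla^0+\lambda[\cdot,\cdot]_{\mathfrak m}$, the variation is governed by $\widehat{\mathrm A}(\nabla^0+(3\lambda-1)[\cdot,\cdot]_{\mathfrak m})$. This is the one-third rule familiar from Bismut's local index theory with skew torsion. Kostant's operator sits at $\lambda=\frac13$, which is $t=\frac23$ in your family, not $t=\frac13$. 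Its comparison connection is therefore $\nabla^0$ itself. Kostant's formula shows this directly. On $H$-equivariant sections the diagonal $\mathfrak h$-Casimir acts by zero, so $\widetilde D^2=\Omega_G+\mathrm{const}=(\nabla^0)^*\nabla^0+\frac14\sum_{i,j}c_ic_j\widetilde R^0(e_i,e_j)+\mathrm{const}$, which is the Lichnerowicz form for $\nabla^0$. That is why the statement transgresses from $\nabla^0$ with nothing left over. Your repair, moving $2\int_M\widetilde{\widehat{\mathrm A}}(\nabla^0,\nabla^0+\frac13[\cdot,\cdot]_{\mathfrak m})$ into $J_D$, does not help. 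It is only a rewriting of a false identity. Moreover, that term is a non-integral real number, so it cannot belong to the integer $J_D$; the paper's evaluations $J_D=0$ and $J_B=-2$ depend on $J$ being pure spectral flow.

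\textbf{Test case.} Take $G=SU(2)$, $H=1$, the round unit $S^3$. Here $D^\lambda=D+3(\lambda-\frac12)c_1c_2c_3$ with $c_1c_2c_3=\pm1$. The operator $\widetilde D$ has eigenvalues $+(m+1)$ with multiplicity $(m+1)(m+2)$ and $-(m+1)$ with multiplicity $m(m+1)$, on the $V_{m/2}$-isotypic part. Hence, for one orientation, $\eta(\widetilde D)=2\zeta(-1)=-\frac16$, while $\eta(D)=0$ and no eigenvalue crosses zero. Set $\nabla^s=\nabla^0+s[\cdot,\cdot]_{\mathfrak m}$. A Chern--Simons computation gives $2\int\widetilde{\widehat{\mathrm A}}(\nabla^0,\nabla^s)=s^2-\frac23 s^3$ up to a global sign. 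Indeed $\eta(D^\lambda)-\eta(\widetilde D)$ equals this value at $s=3\lambda-1$ along the whole path. At $s=\frac12$ it is $\frac16$, consistent with the theorem for $J_D=0$ and $I_D=(\eta+h)(\widetilde D)$. Your transgression from $s=\frac13$ gives only $\frac16-\frac{7}{81}=\frac{13}{162}$.

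\textbf{Step 1.} The same example refutes Step~1 as written. The signature of ${}^\gamma\widetilde D$ is nonzero for \emph{every} $\gamma$, so $\eta(\widetilde D)$ is a regularized infinite sum, not a finite signed count. A finite count would be an integer, whereas $I_D$ generally is not; for example, Lemma~\ref{lm:sum1} gives $I_D=\frac{541}{15680}$ for $N_{1,2}$. Evaluating that regularized sum is the real content of $I_D$. Its form in Section~\ref{sect5} is a Weyl-group sum in $X\in\mathfrak t$, taken at $X=0$ after subtracting a term built from restrictions to $\mathfrak s$. That is the shape of an equivariant computation followed by a limit, and your sketch gives no route to it.
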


It turns out that these formulas are well-suited for the computation of $\bar{\nu}$ invariants, which we discuss in the next section.

\subsection{\texorpdfstring{$\nu$ invariant in the homogeneous case }{nu invariant in the homogeneous case}}
\label{nuhom}

We derive the formula for the $\bar{\nu}$ invariant of the homogeneous $G_2$-structure inducing the \textit{normal metric} in terms of the reductive connection $\nabla^0$. This follows the approach of [\cite{Crowley2025}, section 1.3].

\begin{prop}
	Let $G/H$ be a homogeneous space and let $\varphi$ be a homogeneous $G_2$-structure inducing the normal metric. Then the $\bar{\nu}$ invariant can be computed as:
	\begin{equation}
		\bar{\nu}(\varphi) = -24I_D + 3I_B -24J_D + 3J_B.
	\end{equation}
	\label{prop:homnu}
\end{prop}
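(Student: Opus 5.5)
The plan is to start from the definition of $\bar{\nu}$ and substitute Goette's formulas \eqref{eq:etaD} and \eqref{eq:etaB} for $\eta(D_M)$ and $\eta(B_M)$. This gives
\[
\bar{\nu}(\varphi) = 2\int_M \zeta^*\psi(\nabla^{SM},g^{SM}) - 48\int_M \widetilde{\widehat{\mathrm{A}}}(\nabla^0,\nabla^{TM}) + 3\int_M \widetilde{L}(\nabla^0,\nabla^{TM}) -24I_D + 3I_B -24J_D + 3J_B .
\]
It then remains to show that the three integral terms add up to zero. This is exactly the mechanism of \cite[Section 1.3]{Crowley2025}, with the Levi-Civita connection replaced by the reductive connection $\nabla^0$.

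The first step is to change the connection in the Mathai--Quillen term. The reductive connection $\nabla^0$ on $TM$ lifts to a metric connection on $SM$, which we also call $\nabla^0$. By the transgression formula \eqref{eq:trans},
\[
\zeta^*\psi(\nabla^{SM},g) = \zeta^*\psi(\nabla^0,g) + \widetilde{e}(\nabla^0,\nabla^{SM},g) + d(\cdot),
\]
and the exact term integrates to zero because $M$ is closed. The homogeneous spinor $\zeta$ is $\nabla^0$-parallel by Lemma \ref{lem:redpar}, so Lemma \ref{lem:red} gives $\zeta^*\psi(\nabla^0,g)=0$. Hence $2\int_M\zeta^*\psi = 2\int_M \widetilde{e}(\nabla^0,\nabla^{SM})$, the transgression of the Euler form of the rank-$8$ bundle $SM$.

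The second step is to express this Euler transgression through $\widehat{A}$, $L$ and the tangent Euler class. For the positive half-spin bundle of an $8$-dimensional spin bundle, the Chern--Weil identity underlying \eqref{ref:6} is
\[
2e(S^+) = 48\widehat{A} + e(T) - 3L .
\]
This holds at the level of invariant polynomials, so it holds for any connection and also for the secondary (transgressed) forms. Apply it on $M\times[0,1]$ with the connection interpolating between $\nabla^0$ and $\nabla^{LC}$, where $S^+(M\times[0,1])\cong SM$ and $T(M\times[0,1]) = TM\oplus\underline{\mathbb{R}}$. Integrating over the fibre then gives
\[
2\widetilde{e}(SM) = 48\widetilde{\widehat{A}} - 3\widetilde{L} + \widetilde{e}(TM\oplus\underline{\mathbb{R}})
\]
modulo exact forms. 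The last transgression vanishes, since both connections are direct sums with the trivial flat connection on $\underline{\mathbb{R}}$, so the Pfaffian and its transgression vanish identically. Substituting this into the formula above, the three integral terms cancel and the claimed identity follows.

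The main obstacle I expect is the bookkeeping of orientation and sign conventions in the transgression step. One has to check that the order of connections in $\widetilde{e}(\nabla^0,\nabla^{SM})$ matches the order in Goette's $\widetilde{\widehat{A}}(\nabla^0,\nabla^{TM})$ and $\widetilde{L}(\nabla^0,\nabla^{TM})$. One also has to check that the factor $2$ in \eqref{eq:etaD} is compatible with the coefficient $48$: it is, because $-24\cdot 2 = -48$. I would fix conventions by comparing with the Levi-Civita computation in \cite[Theorem 1.2]{Crowley2025}, which has the same structure with $\nabla^0$ replaced by $\nabla^{LC}$. A secondary point is that $\nabla^0$ has torsion, but the Chern--Weil and transgression arguments only use that it is a metric connection, and the lifted spin connection is metric.
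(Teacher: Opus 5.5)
Your proposal is correct and follows essentially the same route as the paper's proof. Like the paper, you transgress the Mathai--Quillen term from the Levi-Civita spin connection to $\nabla^0$, kill $\zeta^*\psi(\nabla^0,g)$ using Lemma~\ref{lem:redpar} and Lemma~\ref{lem:red}, rewrite $2\widetilde{e}(\nabla^0,\nabla^{SM})$ as $48\widetilde{\widehat{\mathrm{A}}}-3\widetilde{L}$, and cancel these against the secondary terms in Goette's formulas. Your argument that the transgression of $e(TM\oplus\underline{\mathbb{R}})$ vanishes, because the connection is a direct sum with the trivial flat line, spells out a step the paper compresses into ``standard formulas for the Euler class.''
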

\begin{proof}%[Proof of Proposition \ref{prop:homnu}]
First we use formula \eqref{eq:trans} to rewrite the Mathai-Quillen term as:

\begin{equation*}
	2\int_M \zeta^*\psi(\nabla^{SM},g^{SM}) = 2\int_M \zeta^*\psi(\nabla^0,g^{SM}) + 2\int_M \widetilde{e}(\nabla^0,\nabla^{SM}).
\end{equation*}

Using the standard formulas for the Euler class, this becomes 
\begin{equation*}
	= 2\int_M \zeta^*\psi(\nabla^0,g^{SM}) + 48 \int_M \widetilde{\widehat{\mathrm{A}}}(\nabla^0,\nabla^{SM}) - 3 \int_M \widetilde{L}(\nabla^0,\nabla^{SM}).
\end{equation*}

Since $\zeta$ is homogeneous, it is parallel with respect to $\nabla^0$. By Lemma \ref{lem:red} we have
\[
	\zeta^*\psi(\nabla^0,g^{SM}) = 0.
\]

Consequently,
\begin{equation*}
	2\int_M \zeta^*\psi(\nabla^{SM},g^{SM}) = 48 \int_M \widetilde{\widehat{\mathrm{A}}}(\nabla^0,\nabla^{SM}) - 3 \int_M \widetilde{L}(\nabla^0,\nabla^{SM}).
\end{equation*}

We now apply the formulas \eqref{eq:etaD} and \eqref{eq:etaB} for the $\eta$ invariants of the Dirac and odd-signature operators:

\begin{align*}
	\bar{\nu}(\varphi) &= 48 \int_M \widetilde{\widehat{\mathrm{A}}}(\nabla^0,\nabla^{SM}) - 3 \int_M \widetilde{L}(\nabla^0,\nabla^{SM}) -\\
			   &-24I_D - 48\int_M \widetilde{\widehat{\mathrm{A}}}(\nabla^0,\nabla^{SM}) - 24J_D +\\
	&+ 3I_B + 3\int_M \widetilde{L}(\nabla^0,\nabla^{SM}) + 3J_B =\\ 
	&=-24I_D + 3I_B -24J_D + 3J_B.\qedhere
\end{align*}

\end{proof}

This formula was used in \cite[Section 1.3]{Crowley2025} to compute the $\bar{\nu}$ invariant of the homogeneous $G_2$ structure on the Berger space $SO(5)/SO(3)$. It shows that, in the homogeneous case, $\bar{\nu}$ can be expressed purely in terms of representation-theoretic data of the pair $(G,H)$.

We will compute these terms for the Aloff--Wallach spaces in section \ref{sect5}.
\subsection{\texorpdfstring{The $s$ invariant in the homogeneous case}{The s invariant in the homogeneous case}}
In this section we state the formula for the $s$ invariant in the homogeneous case.

\begin{prop}
	\label{prop:homs}
    Let $G/H$ be a homogeneous 7-manifold with normal metric $g$. Additionally, assume that $G/H$ is non-flat. Then $s$ invariant can be computed as:
    \begin{equation*}
        s(g) = \frac{I_B + J_B}{2^5\cdot 7} + \frac{I_D + J_D}{2} - \frac{1}{2^7\cdot 7}\int_M p_1(\nabla^0)\wedge h(\nabla^0).
    \end{equation*}
\end{prop}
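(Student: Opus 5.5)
The plan is to start from the definition of $s$ in Section~\ref{sinv},
\[
	s = \frac{\eta(B)}{2^5\cdot 7} + \frac{\eta(D)+h(D)}{2} - \frac{1}{2^7\cdot 7}\int_M p_1(\nabla^{LC})\wedge h(\nabla^{LC}),
\]
substitute Goette's formulas \eqref{eq:etaD} and \eqref{eq:etaB}, and check that all secondary characteristic form terms combine into the change of the Chern--Simons-type term $\int_M p_1\wedge h$ from $\nabla^{LC}$ to $\nabla^0$. This parallels the proof of Proposition~\ref{prop:homnu}. Throughout I assume, as the definition of $s$ requires, that $p_1(M)$ is rationally trivial, so that $h(\nabla^0)$ and $h(\nabla^{LC})$ exist.

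\textbf{Step 1: $h(D)=0$.} A normal homogeneous metric has nonnegative sectional curvature. Its scalar curvature is constant by homogeneity. If the scalar curvature were zero, then nonnegative sectional curvature would force the metric to be flat, which is excluded. So the scalar curvature is a positive constant, and Lichnerowicz's formula gives $\ker D=0$, hence $h(D)=0$. This is the only place where non-flatness is used.

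\textbf{Step 2: combine the transgression terms.} After substitution, the secondary characteristic forms contribute
\[
	\int_M\Big(\tfrac{1}{224}\widetilde{L}+\widetilde{\widehat{\mathrm{A}}}\Big)(\nabla^0,\nabla^{LC}).
\]
In degree $8$ we have $L=\frac{1}{45}(7p_2-p_1^2)$ and $\widehat{\mathrm{A}}=\frac{1}{5760}(7p_1^2-4p_2)$. The coefficients in $\frac{1}{224}L+\widehat{\mathrm{A}}$ work out as follows:
\begin{itemize}
\item the coefficient of $p_2$ is $\frac{7}{45\cdot 224}-\frac{4}{5760}=0$;
\item the coefficient of $p_1^2$ is $-\frac{1}{10080}+\frac{7}{5760}=\frac{1}{896}$.
\end{itemize}
Since transgression is linear in the invariant polynomial, the contribution is $\frac{1}{896}\int_M\widetilde{p_1^2}(\nabla^0,\nabla^{LC})$.

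\textbf{Step 3: identify $\int_M\widetilde{p_1^2}$.} Let $\widetilde{p}_1=\widetilde{p}_1(\nabla^0,\nabla^{LC})$, so that $d\widetilde{p}_1=p_1(\nabla^{LC})-p_1(\nabla^0)$. Modulo exact forms, the transgression of $p_1^2$ is $\widetilde{p}_1\wedge\big(p_1(\nabla^0)+p_1(\nabla^{LC})\big)$. Choose $h(\nabla^0)$ with $dh(\nabla^0)=p_1(\nabla^0)$, and set $h(\nabla^{LC})=h(\nabla^0)+\widetilde{p}_1$. Then:
\begin{itemize}
\item Expanding gives $\int_M p_1(\nabla^{LC})\wedge h(\nabla^{LC})-\int_M p_1(\nabla^0)\wedge h(\nabla^0) = \int_M d\widetilde{p}_1\wedge h(\nabla^0)+\int_M p_1(\nabla^{LC})\wedge\widetilde{p}_1$.
\item By Stokes, $\int_M d\widetilde{p}_1\wedge h(\nabla^0)=\int_M\widetilde{p}_1\wedge p_1(\nabla^0)$.
\item Hence the difference equals $\int_M\widetilde{p_1^2}(\nabla^0,\nabla^{LC})$.
\end{itemize}
The value of $\int_M p_1\wedge h$ does not depend on the choice of $h$, because $p_1$ is closed and $M$ is closed. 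So this identification holds for any admissible choices.

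\textbf{Step 4: assemble.} The term $\frac{1}{896}\int_M\widetilde{p_1^2}$ cancels the Levi-Civita term $-\frac{1}{896}\int_M p_1(\nabla^{LC})\wedge h(\nabla^{LC})$ and replaces it by $-\frac{1}{896}\int_M p_1(\nabla^0)\wedge h(\nabla^0)$. What remains is $\frac{I_B+J_B}{2^5\cdot7}+\frac{I_D+J_D}{2}$, which is the claimed formula.

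The main obstacle is keeping consistent sign and orientation conventions for the secondary forms: Goette's $\widetilde{\widehat{\mathrm{A}}}(\nabla^0,\nabla^{TM})$ versus $d\widetilde{p}_1=p_1(\nabla^{LC})-p_1(\nabla^0)$. There is also a subtlety that $\nabla^0$ has torsion, although the Chern--Weil transgression does not require torsion-freeness. I would fix conventions by matching them with the proof of Proposition~\ref{prop:homnu}.
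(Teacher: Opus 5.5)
Your proposal is correct and follows the paper's proof: $h(D)=0$ from positive scalar curvature, then substitution of Goette's formulas \eqref{eq:etaD} and \eqref{eq:etaB}, then cancellation of the transgression terms against the change in $\int_M p_1\wedge h$ between $\nabla^{LC}$ and $\nabla^0$. The only difference is that the paper quotes this last identity from \cite{Goette2004}, whereas you derive it by checking $\frac{1}{224}L+\widehat{\mathrm{A}}=\frac{1}{896}p_1^2$ in degree $8$ and applying the product rule for Chern--Simons forms. One small wording fix: the independence of $\int_M p_1\wedge h$ from the choice of $h$ uses that $p_1=dh$ is exact, not merely that it is closed.
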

\begin{proof}%[Proof of Proposition \ref{prop:homs}]
    We recall the approach from \cite{Goette2004} used to compute the Eells-Kuiper invariant in the homogeneous case. Recall the definition of the $s$ invariant:

\[
s = \frac{\eta(B)}{2^5\cdot 7} +\frac{\eta(D)+h(D)}{2} - \frac{1}{2^7\cdot 7}\int_M p_1(\nabla^{TM})\wedge h(\nabla^{TM}).
\]

Since $G$ is assumed to be compact and metric $g$ is non-flat, then scalar curvature is positive \cite{Besse1987} and $h(D) = 0$.

From \cite{Goette2004} we can also rewrite the term 
\begin{align*}
	\frac{1}{2^7\cdot 7}\int_M p_1(\nabla^{TM})\wedge h(\nabla^{TM}) &= \frac{1}{2^7\cdot 7}\int_M p_1(\nabla^0)\wedge h(\nabla^0) \\
									&+\int_M  \widetilde{\widehat{\mathrm{A}}}(\nabla^0,\nabla^{TM}) + \frac{1}{2^5\cdot 7}\int_M \widetilde{L}(\nabla^0,\nabla^{TM}).
\end{align*}

Then 

\begin{align*}
	s &= \frac{\eta(B)}{2^5\cdot 7} + \frac{\eta(D)+h(D)}{2} - \frac{1}{2^7\cdot 7} \int_M p_1\wedge h\\
	  &= \frac{I_B + J_B}{2^5\cdot 7} + \frac{\int_M \widetilde{L}(\nabla^0,\nabla^{TM})}{2^5\cdot 7} + \frac{I_D + J_D}{2} + \frac{2\int_M \widetilde{\widehat{\mathrm{A}}}(\nabla^0,\nabla^{TM})}{2}\\
	  &- \frac{1}{2^7\cdot 7}\int_M p_1(\nabla^0)\wedge h(\nabla^0)  - \int_M  \widetilde{\widehat{\mathrm{A}}}(\nabla^0,\nabla^{TM}) - \frac{1}{2^5\cdot 7}\int_M \widetilde{L}(\nabla^0,\nabla^{TM})\\
	  &= \frac{I_B + J_B}{2^5\cdot 7} + \frac{I_D + J_D}{2} - \frac{1}{2^7\cdot 7}\int_M p_1(\nabla^0)\wedge h(\nabla^0). \qedhere
\end{align*}
\end{proof}

\begin{remark}
	This formula was used in \cite{Goette2004} to compute the Eells-Kuiper invariant of the Berger space. 
\end{remark}

\subsection{\texorpdfstring{The $\xi$ invariant in the homogeneous case}{The xi invariant in the homogeneous case}}

In this section we combine the methods from two previous sections to derive a formula for the $\xi$ invariant in the homogeneous case. 
\begin{prop}
	\label{prop:homxi}
    Let $G/H$ be a homogeneous space, let $\varphi$ be a homogeneous $G_2$-structure inducing the normal metric, then its $\xi$ invariant can be computed as:
    \begin{equation*}
        \xi(\varphi) = \frac{45}{2}\left(I_B + J_B\right) - \frac{3}{8}\int_M p_1(\nabla^0)\wedge h(\nabla^0).
    \end{equation*}
\end{prop}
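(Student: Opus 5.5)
The plan is to start from the intrinsic formula of Theorem \ref{thm:xiintr},
\[
\xi(\varphi) = 14 \int_M \zeta^* \psi(\nabla^{SM},g^{SM}) + \frac{45}{2} \eta(B_M) - \frac{3}{8} \int_M p_1(\nabla^{LC}) \wedge h(\nabla^{LC}),
\]
and rewrite each of the three terms in terms of the reductive connection $\nabla^0$, exactly as in the proofs of Propositions \ref{prop:homnu} and \ref{prop:homs}. The formula implicitly presupposes $p_1(M)=0\in H^4(M,\mathbb{Q})$, so that $\xi$ and $h$ make sense; I will assume this. As in Proposition \ref{prop:homs}, I will also assume $G/H$ is non-flat, or otherwise note that the $h(D)$ term does not enter $\xi$ at all. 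Indeed, the remark after Theorem \ref{thm:xiintr} shows that the $\eta(D)+h(D)$ contributions cancel, so positivity of scalar curvature is not actually needed.

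For the Mathai--Quillen term, I repeat the argument of Proposition \ref{prop:homnu}. By \eqref{eq:trans} and the decomposition $2\widetilde{e} = 48\widetilde{\widehat{\mathrm{A}}} - 3\widetilde{L}$, together with Lemmas \ref{lem:redpar} and \ref{lem:red}, we get
\[
14\int_M\zeta^*\psi = 336\int_M\widetilde{\widehat{\mathrm{A}}}(\nabla^0,\nabla^{TM}) - 21\int_M\widetilde{L}(\nabla^0,\nabla^{TM}).
\]
For the signature term, \eqref{eq:etaB} gives
\[
\tfrac{45}{2}\eta(B) = \tfrac{45}{2}(I_B+J_B) + \tfrac{45}{2}\int_M\widetilde{L}.
\]
For the Chern--Simons-type term, I use the transgression identity quoted from \cite{Goette2004} in the proof of Proposition \ref{prop:homs}. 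Multiplying it by $2^7\cdot 7$ gives
\[
\int_M p_1(\nabla^{LC})\wedge h(\nabla^{LC}) = \int_M p_1(\nabla^0)\wedge h(\nabla^0) + 896\int_M\widetilde{\widehat{\mathrm{A}}} + 28\int_M\widetilde{L}.
\]
Hence
\[
-\tfrac38\int_M p_1\wedge h = -\tfrac38\int_M p_1(\nabla^0)\wedge h(\nabla^0) - 336\int_M\widetilde{\widehat{\mathrm{A}}} - \tfrac{21}{2}\int_M\widetilde{L}.
\]

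Summing the three pieces, the $\widetilde{\widehat{\mathrm{A}}}$ coefficients cancel ($336-336=0$), and so do the $\widetilde{L}$ coefficients ($-21+\tfrac{45}{2}-\tfrac{21}{2}=-9\neq0$?). This last check requires care, and it is the main obstacle: the normalisations of $\widetilde{L}$ and $\widetilde{\widehat{\mathrm{A}}}$ in \eqref{eq:etaB}, \eqref{eq:etaD}, in the Euler decomposition and in Goette's identity must be made consistent. The cancellation is guaranteed structurally, because $\xi$ can equally be written as $7\bar\nu + 7h(D) + 12\cdot 28\,s$. Substituting Proposition \ref{prop:homnu} and Proposition \ref{prop:homs} into this expression gives
\[
\xi = 7(-24I_D+3I_B-24J_D+3J_B) + 336\left(\frac{I_B+J_B}{224} + \frac{I_D+J_D}{2}\right) - \frac{336}{896}\int_M p_1(\nabla^0)\wedge h(\nabla^0).
\]
Simplifying, the $I_D,J_D$ terms cancel ($-168+168=0$), the $I_B,J_B$ coefficient is $21+\tfrac32=\tfrac{45}{2}$, and the last coefficient is $\tfrac38$. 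This is the claimed formula.

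So the cleanest route is this combination argument. First I verify the identity $\xi = 7\bar\nu + 7h(D) + 336\,s$ directly from Theorem \ref{thm:xiintr} and the definitions of $\bar\nu$ and $s$; this is a short coefficient check in which the $\eta(D)+h(D)$ terms cancel. Then I insert the two propositions, noting that both hold for the normal metric and that $h(D)=0$ there. The direct transgression route above serves as a consistency check, and I would use it to pin down the sign conventions if a discrepancy appears.
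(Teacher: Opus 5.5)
\textbf{Verdict.} Your final argument is correct, but it does not follow the paper's route.

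\textbf{The direct route.} The paper proves the proposition along the direct path you started with. It inserts three things into Theorem~\ref{thm:xiintr}: the transgression formula for the Mathai--Quillen term, Goette's formula \eqref{eq:etaB}, and the identity from \cite{Goette2004} for $\int_M p_1\wedge h$. It then checks that all secondary forms cancel. The obstacle you met there is only an arithmetic slip. Multiplying Goette's identity by $2^7\cdot 7$ gives the coefficient $2^7\cdot 7/(2^5\cdot 7)=4$ in front of $\int_M\widetilde{L}(\nabla^0,\nabla^{TM})$, not $28$. The $\widetilde{L}$-coefficients are then $-21+\tfrac{45}{2}-\tfrac38\cdot 4=0$, and the $\widetilde{\widehat{\mathrm{A}}}$-coefficients are $336-336=0$, so the direct route closes immediately.

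\textbf{Your fallback.} You instead write $\xi$ as a combination of $\bar\nu$ and $s$ and substitute Propositions~\ref{prop:homnu} and~\ref{prop:homs}. Be careful with the relation you quote. From the definitions one gets $7\bar\nu+336\,s=\xi+168\,h(D)$. So only the $\eta(D)$-terms cancel, not the $\eta(D)+h(D)$ terms. The correct identity is $\xi=7\bar\nu+336\,s-168\,h(D)$, not $\xi=7\bar\nu+7h(D)+336\,s$. Because you then use $h(D)=0$ for the non-flat normal metric, your substitution and the resulting coefficients $\tfrac{45}{2}$ and $\tfrac38$ are still right.

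\textbf{What each route buys.} Your route is shorter. It also shows that the $\xi$-formula is forced by the two earlier propositions, where all the transgression bookkeeping has already been done. The price is that it inherits the non-flatness hypothesis of Proposition~\ref{prop:homs}. Alternatively, you must keep the $h(D)/2$ term in that proposition so that it cancels against $-168\,h(D)$. The paper's direct computation never involves the Dirac operator, so it needs no curvature assumption. It needs only $p_1(M)=0\in H^4(M;\mathbb{Q})$, which both arguments require for $h$ to exist, as you correctly noted.
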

\begin{proof}%[Proof of Proposition \ref{prop:homxi}]
Recall that,

\[
	\xi = 14 \int_M \zeta^* \psi + \frac{45}{2} \eta(B_M) - \frac{3}{8} \int_M p_1(\nabla^{LC}) \wedge h(\nabla^{LC}).
\]

Analogously to the Proposition \ref{prop:homs}, we can rewrite this as:
\begin{align*}
	\xi &= 7 \cdot \left(48 \int_M \widetilde{\widehat{\mathrm{A}}}(\nabla^0,\nabla^{SM}) - 3 \int_M \widetilde{L}(\nabla^0,\nabla^{SM})\right) \\
	    &+ \frac{45}{2}\left(I_B + \int_M \widetilde{L}(\nabla^0,\nabla^{TM}) + J_B\right)\\
	    &- \frac{3}{8}\left(\int_M p_1(\nabla^0)\wedge h(\nabla^0) +2^7 \cdot 7\int_M  \widetilde{\widehat{\mathrm{A}}}(\nabla^0,\nabla^{TM}) + 4\int_M \widetilde{L}(\nabla^0,\nabla^{TM})\right)\\
	    &= \frac{45}{2}\left(I_B + J_B\right) - \frac{3}{8}\int_M p_1(\nabla^0)\wedge h(\nabla^0). \qedhere
\end{align*}

\end{proof}
\section{\texorpdfstring{Aloff--Wallach spaces $N_{k,l}$}{Aloff--Wallach spaces N(k,l)}}
\label{sect4}
\subsection{Geometry of Aloff--Wallach spaces}
\label{AWspaces}

Let $(k,l)$ be the pair of integer numbers such that $k \neq \pm l$, $l \neq \pm(k+l)$, $k+l \neq \pm k$, and $k$ and $l$ are coprime. 

The Aloff--Wallach spaces $N_{k,l}$ are defined as quotients $SU(3)/S^1_{k,l}$, where the subgroup $S^1_{k,l}$ is given as $\left\{\diag\left(e^{ikx},e^{ilx},e^{-i(k+l)x}\right)\right\}$
We will also assume that $k,l > 0$ (other cases can be obtained from this one by the change of orientation and permutations of $(k,l,-k-l)$)

\begin{remark}
	While we restrict our attention to the general case $k \neq \pm l$, $l \neq \pm(k+l)$, $k+l \neq \pm k$, the exceptional Aloff--Wallach spaces also admit homogeneous $G_2$-structures, but the space of such structures is larger than in the general case. Nevertheless, our methods still can be applied in these cases. We consider them in Section \ref{sect:except}.
\end{remark}

First, we describe the structure of the Aloff--Wallach spaces.
Fix the metric on $\mathfrak{su}(3)$ by $\langle X,Y \rangle = -\tr(XY)$.
Let $\mathfrak{su}(3) = \mathfrak{m} \oplus \mathfrak{u}(1)_{k,l}$ be the orthogonal decomposition. Choose the following basis for the subspace $\mathfrak{m}:$
\begin{align}
e_1 &= \frac{1}{\sqrt{2}}
\begin{pmatrix}
0 & 1 & 0 \\
-1 & 0 & 0 \\
0 & 0 & 0
\end{pmatrix},
&
e_5 &= \frac{i}{\sqrt{2}}
\begin{pmatrix}
0 & 1 & 0 \\
1 & 0 & 0 \\
0 & 0 & 0
\end{pmatrix}, \notag
\\[1em]
e_2 &= \frac{1}{\sqrt{2}}
\begin{pmatrix}
0 & 0 & 0 \\
0 & 0 & 1 \\
0 & -1 & 0
\end{pmatrix},
&
e_6 &= \frac{i}{\sqrt{2}}
\begin{pmatrix}
0 & 0 & 0 \\
0 & 0 & 1 \\
0 & 1 & 0
\end{pmatrix},
\label{eq:basis}
\\[1em]
e_3 &= \frac{1}{\sqrt{2}}
\begin{pmatrix}
0 & 0 & -1 \\
0 & 0 & 0 \\
1 & 0 & 0
\end{pmatrix},
&
e_7 &= \frac{i}{\sqrt{2}}
\begin{pmatrix}
0 & 0 & 1 \\
0 & 0 & 0 \\
1 & 0 & 0
\end{pmatrix},\notag
\\[1em]
e_4 &= \frac{i}{\sqrt{6}\,\sqrt{k^2 + l^2 + kl}}
\begin{pmatrix}
2l + k & 0 & 0 \\
0 & -2k - l & 0 \\
0 & 0 & k - l
\end{pmatrix}.\notag
\end{align}
The subspace $\mathfrak{m}^\perp = \mathfrak{u}(1)_{k,l}$ is generated by
\[
e_8 = \frac{i}{\sqrt{2}\,\sqrt{k^2 + l^2 + kl}}
\begin{pmatrix}
k & 0 & 0 \\
0 & l & 0 \\
0 & 0 & -k - l
\end{pmatrix}.
\]
Denote the adjoint action of $S^1$ as $\pi$. Under the action of $\pi$ vector $e_4$ is fixed, while the planes $\langle e_1,e_5 \rangle$, $\langle e_2,e_6 \rangle$, and $\langle e_3,e_7 \rangle$ carry weights $i(k-l)$, $i(2l+k)$, and $i(-2k-l)$ respectively.

The tangent bundle is given as $TN_{k,l} = SU(3)\times_\pi \mathfrak{m}$.

Let $r = \sqrt{6(k^2+kl+l^2)}$. The multiplication table for the commutators is given as:

\begin{paracol}{2}
\begin{align*}
[e_1,e_2]_\mathfrak{m}&=-\tfrac{1}{\sqrt2}e_3,\\
[e_1,e_3]_\mathfrak{m}&=\tfrac{1}{\sqrt2}e_2,\\
[e_1,e_4]_\mathfrak{m}&=-\tfrac{3(k+l)}{r}e_5,\\
[e_1,e_5]_\mathfrak{m}&=\tfrac{3(k+l)}{r}e_4,\\
[e_1,e_6]_\mathfrak{m}&=\tfrac{1}{\sqrt2}e_7,\\
[e_1,e_7]_\mathfrak{m}&=-\tfrac{1}{\sqrt2}e_6.\\
\\
[e_3,e_4]_\mathfrak{m}&=\tfrac{3l}{r}e_7,\\
[e_3,e_5]_\mathfrak{m}&=\tfrac{1}{\sqrt2}e_6,\\
[e_3,e_6]_\mathfrak{m}&=-\tfrac{1}{\sqrt2}e_5,\\
[e_3,e_7]_\mathfrak{m}&=-\tfrac{3l}{r}e_4.\\
\\
[e_6,e_4]_\mathfrak{m}&=-\tfrac{3k}{r}e_2,\\
[e_6,e_7]_\mathfrak{m}&=\tfrac{1}{\sqrt2}e_1.
\end{align*}

\switchcolumn

\vspace{0.6ex}
\begin{align*}
[e_2,e_3]_\mathfrak{m}&=-\tfrac{1}{\sqrt2}e_1,\\
[e_2,e_4]_\mathfrak{m}&=\tfrac{3k}{r}e_6,\\
[e_2,e_5]_\mathfrak{m}&=-\tfrac{1}{\sqrt2}e_7,\\
[e_2,e_6]_\mathfrak{m}&=\tfrac{-3k}{r}e_4,\\
[e_2,e_7]_\mathfrak{m}&=\tfrac{1}{\sqrt2}e_5.\\
\\
\\
[e_5,e_4]_\mathfrak{m}&=\tfrac{3(k+l)}{r}e_1,\\
[e_5,e_6]_\mathfrak{m}&=\tfrac{1}{\sqrt2}e_3,\\
[e_5,e_7]_\mathfrak{m}&=-\tfrac{1}{\sqrt2}e_2.
\\
\\
\\
[e_7,e_4]_\mathfrak{m}&= -\frac{3l}{r}e_3.
\end{align*}

\end{paracol}

Let $\widetilde{\pi}: \mathfrak{h} \to \End(S)$ denote the spin representation induced from the isotropy representation $\pi$. Then the spinor bundle is given as $SM = G \times_{\widetilde{\pi}} S$. The homogeneous bundle $\Omega^{ev}M$ in dimension seven is isomorphic to the bundle $G \times_{\widetilde{\pi} \otimes \hat{\widetilde{\pi}}} S\otimes S$, where $\hat{\widetilde{\pi}}$ denotes the representation isomorphic to $\widetilde{\pi}$ acting on the second factor.

It is easy to check that in the case of the Aloff--Wallach space $N_{k,l}$ the weights of $\widetilde{\pi}$ and $\hat{\widetilde{\pi}}$ are $(0,0, \pm i(k-l), \pm i(2k+l), \pm i(2l+k))$.

\subsection{Diffeomorphism classification of Aloff--Wallach spaces}
\label{sect:classif}
In this section we recall the classification of Aloff--Wallach spaces proven in \cite{Kreck1991}.

\begin{theorem}[{\cite{Kreck1991}}]
	\label{ref:classif}
    Let, $k,l$ and $k',l'$ be pairs of coprime integers, then there exist orientation-preserving diffeomorphism between spaces $N_{k,l}$ and $N_{k',l'}$ if and only if $k^2+kl+l^2 = k'^2 +k'l' + l'^2$ and $kl(k+l) \equiv k'l'(k'+l') \mod 2^5 \cdot 7^{\lambda(N)} \cdot 3 \cdot N$, where $N = (k^2+kl+l^2)$ and 
    \begin{equation*}
        \lambda(N) = \begin{cases}
		0 \text{ if } N \equiv 0 \mod 7,\\
		1 \text{ otherwise.}
        \end{cases}
    \end{equation*}
\end{theorem}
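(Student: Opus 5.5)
This is the Kreck--Stolz classification, and I would reprove it through the Kreck--Stolz invariants $s_1,s_2,s_3\in\mathbb{Q}/\mathbb{Z}$.

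\emph{Topology of $N_{k,l}$.} From the fibration $S^1_{k,l}\to SU(3)\to N_{k,l}$ together with the Gysin sequence of the associated circle bundle, $N_{k,l}$ is simply connected with $H^2(N_{k,l})\cong\mathbb{Z}$, generated by a class $u$, and $H^3=0$. Moreover $H^4(N_{k,l})\cong\mathbb{Z}/N$ is generated by $u^2$, where $N=k^2+kl+l^2$, and $w_2=0$. This places $N_{k,l}$ among the ``manifolds of type $r$'' of Kreck--Stolz.

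\emph{Classification theorem.} For such manifolds Kreck--Stolz prove, via modified surgery, the following: two manifolds of type $r$ with $|H^4|$ equal are orientation-preservingly diffeomorphic if and only if their invariants $s_1,s_2,s_3$ agree. These invariants are defined on a spin coboundary $W$ over which $u$ extends to a class $z$. They are combinations of $\int_W p_1^2$, $\sigma(W)$, $z^2p_1$, $z^4$ and $z^2$, with denominators $2^5\cdot 7$, $2^4\cdot 3$ and $2^3\cdot 3$. In particular $s_1$ is exactly the Eells--Kuiper type quantity $\frac{p_W^2-\sigma(W)}{2^5\cdot 7}$. The necessity of $N=N'$ is immediate from $|H^4|$.

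\emph{Computing the invariants.} I would take $W$ to be the disk bundle of the $\mathbb{R}^2$-bundle (equivalently, of the $\mathbb{C}$-line bundle) over the flag manifold $SU(3)/T^2$ associated to the character that cuts out $S^1_{k,l}$. Then $N_{k,l}$ is the corresponding circle bundle, and $W$ is spin for the appropriate choice of lift. The cohomology ring of $SU(3)/T^2$ and its Pontryagin classes are explicit: $H^*=\mathbb{Z}[x_1,x_2,x_3]/(\sigma_1,\sigma_2,\sigma_3)$, and the tangent bundle is the sum of the line bundles $L_i\otimes L_j^{-1}$. So every characteristic number on $W$ reduces to a Gysin-type integral over the flag manifold. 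The relative terms $p_1^2$ and $z^4$ require inverting the Euler class $e=kx_1+lx_2$ on $H^4$, and this is where the factor $1/N$ enters. Carrying this out should give
\[
s_1=-\frac{kl(k+l)}{2^5\cdot 7\cdot N}+c_1,
\]
and analogous expressions $s_2,s_3$ whose $(k,l)$-dependence is again only through $kl(k+l)/N$ plus terms depending on $N$ alone. (The $s$-invariant of the normal metric, $kl(k+l)/(2^5\cdot 7)$, computed later in the paper, serves as a cross-check on $s_1$.)

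\emph{Arithmetic reduction.} Finally, I would show that for fixed $N$ the simultaneous equalities $s_i(k,l)=s_i(k',l')$ in $\mathbb{Q}/\mathbb{Z}$ are equivalent to $kl(k+l)\equiv k'l'(k'+l')\bmod 2^5\cdot 3\cdot 7^{\lambda(N)}\cdot N$. The $s_1$ equation gives the congruence modulo $2^5\cdot 7\cdot N$, and $s_2,s_3$ supply the factor $3$. When $7\mid N$, every $kl(k+l)$ for coprime $k,l$ satisfies an automatic congruence mod $7$, which is why the factor $7$ drops out.

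I expect the main obstacle to be the explicit evaluation of the relative characteristic numbers, especially $p_1^2$ and $z^4$ in $H^8(W,\partial W)\otimes\mathbb{Q}$, since this needs a lift of classes that are torsion on the boundary, together with the careful number-theoretic bookkeeping modulo $2$, $3$ and $7$. My first attempt would be to do this concretely with the Gysin pushforward and the relations $\sigma_i(x)=0$.
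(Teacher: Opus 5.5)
The paper gives no proof of this statement; it quotes Kreck--Stolz and only remarks that their coboundaries are not spin. Your architecture is the Kreck--Stolz one: classify spin manifolds of type $r$ by $s_1,s_2,s_3$, evaluate on an explicit coboundary, then do the arithmetic. But the central computational step fails as written, because the disk bundle $W$ of $L\to F=SU(3)/T^2$ is never spin.

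The flag manifold is spin, since $c_1(F)=2\rho$. Hence $w_2(W)=\pi^*c_1(L) \bmod 2$. Here $c_1(L)=\pm(lx_1-kx_2)$, the character vanishing on $S^1_{k,l}$; your $kx_1+lx_2$ is not that character. This class is primitive because $\gcd(k,l)=1$. No ``choice of lift'' helps: any circle bundle over $F$ with simply connected total space has primitive, hence nonzero mod $2$, Euler class. The other Kreck--Stolz normalisation, $w_2(W)=z\bmod 2$ with $z|_M=u$, is designed for non-spin $M$ and is also impossible here, since $w_2(W)$ restricts to $0$ on $M$. So the spin formulas for $s_i$ cannot be evaluated on this $W$.

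One fix is to use the spin$^c$ structure on $W$ with determinant class $c=\pi^*c_1(L)$. Since $c|_M=0$, it restricts to the spin structure of $M$. Rerunning Atiyah--Patodi--Singer with $\widehat{A}(W)e^{c/2}$ (twisted by the $z$-bundles for $s_2,s_3$) gives corrected formulas, e.g.
\[
s_1=-\frac{\sigma(W)}{2^5\cdot 7}+\frac{p_1^2}{2^7\cdot 7}-\frac{c^2p_1}{2^6\cdot 3}+\frac{c^4}{2^7\cdot 3},
\]
with analogous mixed $z$--$c$ terms in $s_2,s_3$. The alternative is to find a genuinely spin coboundary.

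Second, your predicted shape of the invariants is wrong, and the arithmetic built on it is too. Since $TN_{k,l}\oplus\underline{\mathbb{R}}\cong SU(3)\times_{S^1}\mathfrak{su}(3)$ is trivial, $p_1(N_{k,l})=0$ integrally, and $N$ is odd. So on a spin coboundary $p_W$ lifts to an integral relative class, $p_W^2\in\mathbb{Z}$, and $s_1\in\frac{1}{2^5\cdot 7}\mathbb{Z}/\mathbb{Z}$. No $1/N$ can appear in $s_1$. The $1/N$ enters only through the relative lift of $z^2$ (as $u^2$ generates $H^4\cong\mathbb{Z}/N$), i.e.\ through $s_2,s_3$, not through $p_1^2$. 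The cross-check you cite confirms this: the paper's $s(N_{k,l})=\frac{kl(k+l)}{2^5\cdot 7}$ reduces mod $\mathbb{Z}$ to $s_1$, which contradicts your $-\frac{kl(k+l)}{2^5\cdot 7\cdot N}+c_1$.

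Thus $s_1$ only detects $kl(k+l)\bmod 2^5\cdot 7$, and both the factor $3$ and the factor $N$ must come from $s_2,s_3$. The factor $7^{\lambda(N)}$ is then not an ``automatic congruence''. Suppose $s_2,s_3$ determine $kl(k+l)$ modulo $2^a\cdot 3\cdot N$ with $a\le 5$. Since $N$ is odd, $\operatorname{lcm}(2^5\cdot 7,\,2^a\cdot 3\cdot N)=2^5\cdot 3\cdot 7^{\lambda(N)}\cdot N$, because the mod-$7$ condition is already contained in the mod-$N$ one when $7\mid N$. In fact no automatic congruence mod $7$ exists. When $7\mid N$,
\[
kl(k+l)=(k+l)^3-N(k+l)\equiv (k+l)^3\equiv\pm 1 \pmod 7,
\]
and both signs occur for the same $N$, e.g.\ $(k,l)=(1,9)$ and $(5,6)$ with $N=91$.
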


Non-trivial pairs of diffeomorphic Aloff--Wallach spaces were found in \cite{Kreck1991} and  \cite{Kreck1993}. Two such pairs are $N_{8287903, 14825547} \cong N_{-18401045, 21746838}$ and $N_{-4638861,582656} \cong N_{-2594149,5052965}$.

\begin{remark}
	The proof of this theorem in \cite{Kreck1991} among other things requires computing the Eells--Kuiper invariant, which is done via constructing explicit coboundaries. These coboundaries are not spin, which makes them unsuitable for computing $\nu$ and $\xi$ invariants in this case.
\end{remark}

\begin{remark}
	Note that in general permutations of the triple $(k,l,-k-l)$ produce diffeomorphic manifolds, but for general $k,l$ these spaces are not equal, only diffeomorphic to each other. However, in the case $(k,l) = (1,-1)$, spaces $N_{1,-1}$ and $N_{-1,1}$ are the same, except for orientation. 

	It follows that $N_{1,-1}$ admits a self-diffeomorphism reversing orientation. From the theorem \ref{ref:classif}, it actually follows that Aloff--Wallach space admits such diffeomorphism if and only if $kl(k+l) = 0$, which is precisely the case of $N_{1,-1}$.
\end{remark}

\subsection{\texorpdfstring{Homogeneous $G_2$-structures on Aloff--Wallach spaces}{Homogeneous G2-structures on Aloff--Wallach spaces}}
\label{AWg2}

Under our assumptions on $k,l$ the most general homogeneous metric is given by choosing the orthonormal basis of the form
\[
	(ae_1, ae_5, be_2, be_6, ce_3, ce_7, de_4).
\]

The associated $G_2$ structure is then chosen by identifying $\mathfrak{m}$ with $\Img\mathbb{O}$ as follows: $ae_1$ is identified with $i$, $ae_5$ with $ie$, $be_2$ with $j$, $be_6$ with $je$, $ce_3$ with $c'k + s'ke$, $ce_7$ with $-s'k + c'ke$ ($c' = \cos x, s' = \sin x$ for some $x$), $de_4$ with $e$.  According to \cite{Cabrera1996} every homogeneous $G_2$-structure on $N_{k,l}$ arises in this way. 

The $G_2$ 3-form is given as:
\begin{align*}
\varphi &= abc \, c' e_1 \wedge e_2 \wedge e_3 - abc \, s' e_1 \wedge e_2 \wedge e_7 + (a^2 d) e_1 \wedge e_4 \wedge e_5 \\
        &\quad - abc \, c' e_1 \wedge e_6 \wedge e_7 + abc \, s' e_1 \wedge e_3 \wedge e_6 + (b^2 d) e_2 \wedge e_4 \wedge e_6 \\
        &\quad + abc \, c' e_2 \wedge e_5 \wedge e_7 - abc \, s' e_2 \wedge e_3 \wedge e_5 + (c^2 d) e_3 \wedge e_4 \wedge e_7 \\
        &\quad - abc \, c' e_3 \wedge e_5 \wedge e_6 + abc \, s' e_5 \wedge e_6 \wedge e_7.
\end{align*}

The coclosed homogeneous $G_2$-structures are given precisely by the condition $s' = 0$. The space of such structures admits an obvious $\mathbb{Z}_2\times \mathbb{Z}_2$ symmetry. As shown in \cite{Ball2019} the space of such $G_2$-structures can therefore be identified with
\[
	(\mathbb{R}^+)^2 \times (\mathbb{R}\backslash \{0\})
\]
by fixing the signs of $a,b$ to be positive.

Up to scaling, there are four different nearly-parallel $G_2$-structures, which we denote $\pm \varphi_1, \pm \varphi_2$. We assume that $\varphi_1$ and $\varphi_2$ induce the chosen orientation on $N_{k,l}$. Since these are homogeneous $G_2$-structures they are homotopic, so 
\[
	\nu(\varphi_1) = \nu(\varphi_2)\ \text{and } \xi(\varphi_1) = \xi(\varphi_2).
\]
But, in fact, one can prove an even stronger statement:
\begin{lemma}
	The homogeneous nearly-parallel $G_2$-structures $\varphi_1$ and $\varphi_2$ are homotopic through a path of homogeneous $G_2$-structures inducing the metrics of positive scalar curvature. In particular, we have
	\[
		\bar{\nu}(\varphi_1) = \bar{\nu}(\varphi_2).
	\]
	\label{lem:nu}
\end{lemma}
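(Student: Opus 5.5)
The idea is to connect $\varphi_1$ and $\varphi_2$ inside the family of coclosed homogeneous $G_2$-structures from Section \ref{AWg2}, parametrised by $(a,b,c,d)$ with $s'=0$, $c'=\pm1$ and $a,b>0$. Along such a path the scalar curvature should stay positive.

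First, recall that every homogeneous metric on $N_{k,l}=SU(3)/S^1_{k,l}$ has positive scalar curvature unless it is flat. This is the fact from \cite{Besse1987} already used in the proof of Proposition \ref{prop:homs}: a homogeneous metric on a compact homogeneous space $G/H$ with $G$ compact has $\mathrm{scal}\ge 0$, with equality only if the metric is flat. Since $\pi_1(N_{k,l})$ is finite (indeed trivial) and $N_{k,l}$ is not a torus quotient, no homogeneous metric on $N_{k,l}$ is flat. If one prefers to avoid the citation, one can use the explicit formula for the scalar curvature of the diagonal metric $\operatorname{diag}(a^{-2},a^{-2},b^{-2},b^{-2},c^{-2},c^{-2},d^{-2})$ in the basis \eqref{eq:basis}, computed from the structure constants listed above via the standard formula for the scalar curvature of naturally reductive-type left-invariant metrics (Wang--Ziller). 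One then checks positivity, since the term coming from the Killing form dominates the negative terms of the form $\sum [e_i,e_j]_\mathfrak{m}$-squared. The general theorem is cleaner, and I would rely on it.

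Second, construct the path. Both $\varphi_1$ and $\varphi_2$ induce the chosen orientation, so they correspond to parameters $(a_1,b_1,c_1,d_1)$ and $(a_2,b_2,c_2,d_2)$ in $(\mathbb{R}^+)^2\times(\mathbb{R}\setminus\{0\})$, possibly after using the $\mathbb{Z}_2\times\mathbb{Z}_2$ symmetry. The orientation induced by $\varphi$ is determined by the sign of $abcd$ together with $c'$, so fixing the orientation fixes the relevant sign. The spinor is homogeneous, hence an $H$-invariant unit vector in $S^H$; by the weights of $\widetilde{\pi}$ we have $\dim S^H=2$. The orientation-fixed family is therefore a connected set, namely a product of positive cones with a circle arc of spinors. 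So I would take the straight-line path in the positive parameters, after arranging the signs to agree. Concretely, via Lemma \ref{rmk:homstr}, use a convex combination of the metrics in $\mathcal{P}(T)^H$ together with a path in the connected unit circle $\mathbb{S}(S^H)$.

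The main obstacle is checking that the two nearly-parallel structures really lie in the same component of homogeneous structures with the given orientation, i.e.\ that no sign flip of $c$ or $d$ is needed that would force passing through $\pm\zeta$ with $\dim S^H=1$. This works here precisely because $\dim S^H=2$ for generic $(k,l)$: the zero weights $(0,0)$ of $\widetilde{\pi}$ give a two-dimensional fixed space, whose unit circle is connected.

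Finally, the invariant $\bar{\nu}$ is constant along paths of $G_2$-structures inducing metrics of positive scalar curvature, as recalled in Section \ref{nuinv}. The reason is that $\eta(D)$ jumps only when $\ker D\ne0$, which the Lichnerowicz formula excludes. Hence $\bar{\nu}(\varphi_1)=\bar{\nu}(\varphi_2)$.
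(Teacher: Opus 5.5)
Your argument has a genuine gap in its first step, and that step carries the content of the lemma. It is not true that every $G$-invariant metric on a compact homogeneous space $G/H$ has nonnegative scalar curvature unless it is flat. The fact from \cite{Besse1987} invoked in Proposition \ref{prop:homs} is applied there to the \emph{normal} metric, which has nonnegative sectional curvature.

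Arbitrary homogeneous metrics can have negative scalar curvature. This already happens on $SU(2)$ (Berger metrics with a long fibre). On $N_{k,l}$ it is visible directly from Park's formula in Section \ref{scalar}. Write $S=f-g$ as there, with $f$ homogeneous of degree $-1$ and $g>0$ homogeneous of degree $-2$. Then
\[
S(t\lambda)=\frac{f(\lambda)}{t}-\frac{g(\lambda)}{t^2}<0
\]
for all sufficiently small $t>0$, i.e.\ whenever the circle direction $e_4$ is long compared with the horizontal directions.

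So the space of homogeneous metrics is convex, but its positive-scalar-curvature part is a proper subset. Your convex-combination path between the two Einstein metrics is not shown to stay in it. Your fallback of "checking positivity" with the Wang--Ziller formula would fail for the same reason. The spinor part, which you flag as the main obstacle, is actually the easy part: $\dim S^H=2$, as both you and the paper note.

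What is missing is a connectedness statement for the homogeneous PSC metrics. The paper supplies this as Lemma \ref{lem:connctd}. Since $S(t\lambda)=f/t-g/t^2$, positivity along a ray persists as $t$ increases, and $\{f>0\}$ is connected.

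You can repair your proof explicitly along these lines.
\begin{itemize}
\item The numerator of $f$ equals $3(\lambda_1+\lambda_2+\lambda_3)^2-4|\lambda|^2$, so $\{f>0\}\cap\mathbb{R}^3_+$ is a convex cone.
\item Both Einstein metrics $\lambda^{(1)},\lambda^{(2)}$ satisfy $f(\lambda^{(i)})>g(\lambda^{(i)})>0$.
\item Push each one radially to $T\lambda^{(i)}$. Since $S(t\lambda^{(i)})>0$ for all $t\ge 1$, this stays in $\{S>0\}$.
\item Join $T\lambda^{(1)}$ to $T\lambda^{(2)}$ by the segment $T\mu$, $\mu\in[\lambda^{(1)},\lambda^{(2)}]$. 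On this compact segment $f\ge m>0$ and $g\le C$, so $S(T\mu)\ge m/T-C/T^2>0$ once $T>C/m$.
\item Combine this metric path with any path in the unit circle of $S^H$.
\end{itemize}
With that replacement for your first step, your final paragraph on the invariance of $\bar{\nu}$ goes through as written.
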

The proof of Lemma \ref{lem:nu} is given in the next section.
\subsection{Scalar curvature}
\label{scalar}

In this section we will discuss the scalar curvature of homogeneous metrics on $N_{k,l}$.

Let $\{f_i\}$ be the dual basis to $\{e_i\}$. Recall that the most general homogeneous metric on $N_{k,l}$ is given by
\[
	g = a^2\left({f_1}^2 + f_5^2\right) + b^2\left(f_2^2 + f_6^2\right) + c^2\left(f_3^2 + f_7^2\right) + d^2f_4^2.
\]

We are interested in the sign of the scalar curvature, which is preserved under rescaling, so we consider the rescaled metric
\[
	g/d^2 = \lambda_1\left({f_1}^2 + f_5^2\right) + \lambda_2\left(f_2^2 + f_6^2\right) + \lambda_3\left(f_3^2 + f_7^2\right) + f_4^2.
\]
The scalar curvature of such a metric can by computed using the following result:
\begin{theorem}[{\cite[Theorem 3.4]{Park2013}}]

\begin{equation*}
S_{(\lambda_1,\lambda_2,\lambda_3)} =
\frac{-(\lambda_1^2 + \lambda_2^2 + \lambda_3^2) + 6(\lambda_1\lambda_2 + \lambda_2\lambda_3 + \lambda_3\lambda_1)}{6\,\lambda_1\lambda_2\lambda_3} - 
\frac{1}{8q}\left(\frac{(k+l)^2}{\lambda_1^2} + \frac{l^2}{\lambda_2^2} + \frac{k^2}{\lambda_3^2}\right),
\end{equation*}
where $q = k^2 + kl + l^2$. 

\end{theorem}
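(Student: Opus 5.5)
The plan is to derive this from the standard Wang--Ziller formula for the scalar curvature of a $G$-invariant metric on a reductive homogeneous space with an orthogonal decomposition of $\mathfrak{m}$ into irreducible, pairwise inequivalent summands. Write $\mathfrak{m}=\mathfrak{m}_1\oplus\mathfrak{m}_2\oplus\mathfrak{m}_3\oplus\mathfrak{m}_4$ with
\[
\mathfrak{m}_1=\langle e_1,e_5\rangle,\quad
\mathfrak{m}_2=\langle e_2,e_6\rangle,\quad
\mathfrak{m}_3=\langle e_3,e_7\rangle,\quad
\mathfrak{m}_4=\langle e_4\rangle .
\]
For $k,l$ generic these summands carry distinct weights under $\pi$. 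Hence every invariant metric is diagonal, $g=\sum_i x_i\langle\cdot,\cdot\rangle|_{\mathfrak{m}_i}$, and after rescaling we may take $(x_1,x_2,x_3,x_4)=(\lambda_1,\lambda_2,\lambda_3,1)$. With $d_i=\dim\mathfrak{m}_i$ and $-B|_{\mathfrak{m}_i}=\beta_i\langle\cdot,\cdot\rangle$, the formula reads
\[
S=\frac12\sum_i\frac{d_i\beta_i}{x_i}-\frac14\sum_{i,j,k}[ijk]\frac{x_k}{x_ix_j}.
\]
Here $[ijk]=\sum\langle[X_\alpha,X_\beta],X_\gamma\rangle^2$, summed over $\langle\cdot,\cdot\rangle$-orthonormal bases of $\mathfrak{m}_i,\mathfrak{m}_j,\mathfrak{m}_k$. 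Since the Killing form of $\mathfrak{su}(3)$ is $6\tr(XY)$, we have $\beta_i=6$ for all $i$.

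The main computation is reading off the structure constants from the multiplication table of Section \ref{AWspaces}.

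\emph{The constant $[123]$.} The brackets $[e_1,e_2]$, $[e_1,e_6]$, $[e_5,e_6]$ and $[e_2,e_5]$ each contribute $(1/\sqrt2)^2$, and together they give $[123]=2$. This value is symmetric in the indices and counted over all $6$ orderings.

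\emph{The constants $[ii4]$.} From $[e_1,e_5]_{\mathfrak{m}}=\tfrac{3(k+l)}{r}e_4$ and $r^2=6q$ we get $[114]=2\cdot 9(k+l)^2/(6q)=3(k+l)^2/q$. In the same way, $[224]=3k^2/q$ and $[334]=3l^2/q$ (or with $k,l$ swapped, matching the table).

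\emph{Vanishing constants.} All other $[ijk]$ vanish.

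Substituting these values, the terms $[ii4]\,x_4/x_i^2$ produce exactly the sum $(k+l)^2/\lambda_1^2+l^2/\lambda_2^2+k^2/\lambda_3^2$ with a factor proportional to $1/q$. The remaining terms, $\sum d_i\beta_i/x_i$, the $[123]$ terms $x_k/(x_ix_j)$, and the $[ii4]$ terms $x_i/(x_ix_4)$, combine into a rational function of $\lambda_1,\lambda_2,\lambda_3$. Over the common denominator $\lambda_1\lambda_2\lambda_3$ its numerator should become $-(\lambda_1^2+\lambda_2^2+\lambda_3^2)+6(\lambda_1\lambda_2+\lambda_2\lambda_3+\lambda_3\lambda_1)$, possibly after a constant term is absorbed by the relation $[114]+[224]+[334]=3\bigl((k+l)^2+k^2+l^2\bigr)/q=6$.

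The step I expect to be the main obstacle is matching the normalisation. Park's constants $1/6$ and $1/(8q)$ correspond to a background metric that is a fixed multiple of $-\tr$, for instance $-B$ or $-\tfrac12\tr$, and possibly to a different scaling of $e_4$. I would first compute with the paper's normalisation $\langle X,Y\rangle=-\tr(XY)$. Then I would determine the constant $c$ such that replacing $\langle\cdot,\cdot\rangle$ by $c\langle\cdot,\cdot\rangle$ reproduces Park's expression exactly. Under $g\mapsto cg$ the scalar curvature scales by $1/c$, and $d^2$ is already normalised to $1$. Because only the sign of $S$ is used later, in the proof of Lemma \ref{lem:nu}, an overall positive constant is harmless. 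The substantive check is therefore that the ratio between the $1/q$ term and the rational term agrees with Park's formula. As a sanity test I would evaluate the result at the normal metric $\lambda_1=\lambda_2=\lambda_3=1$, where $S$ must be positive, and at the nearly-parallel points.
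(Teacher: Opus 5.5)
\paragraph{Comparison with the paper.} The paper does not prove this statement. It quotes it from Park and uses it only in the proof of Lemma \ref{lem:connctd}, through the sign of $S$ and the homogeneity of $f$ and $g$. Your Wang--Ziller derivation is therefore an independent verification rather than a reconstruction of the paper's argument, and it works.

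\paragraph{Carrying out the computation.} Use the constants you list: $Q=-\tr$, $\beta_i=6$, $[123]=2$, $[114]=3(k+l)^2/q$, $[224]=3k^2/q$, $[334]=3l^2/q$. The three groups of terms are as follows.
\begin{itemize}
\item The first sum gives $6(1/x_1+1/x_2+1/x_3)+3/x_4$.
\item The six orderings of $(1,2,3)$ give $-(x_1^2+x_2^2+x_3^2)/(x_1x_2x_3)$.
\item The $[ii4]$ terms give $-\tfrac{x_4}{4}\sum_i[ii4]/x_i^2-\tfrac{1}{2x_4}\sum_i[ii4]$.
\end{itemize}
The last piece equals $-3/x_4$ by your relation $\sum_i[ii4]=6$, so it cancels the $3/x_4$ exactly; nothing is absorbed into the numerator. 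With $x_4=1$ and $x_i=\lambda_i$ this gives
\[
S=\frac{-(\lambda_1^2+\lambda_2^2+\lambda_3^2)+6(\lambda_1\lambda_2+\lambda_2\lambda_3+\lambda_3\lambda_1)}{\lambda_1\lambda_2\lambda_3}-\frac{3}{4q}\Bigl(\frac{(k+l)^2}{\lambda_1^2}+\frac{k^2}{\lambda_2^2}+\frac{l^2}{\lambda_3^2}\Bigr).
\]

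\paragraph{Normalisation.} Your constant is $c=6$. Park's expression is the scalar curvature computed with background $-B=6Q$. Since $-B$ is a uniform multiple of $Q$ on all of $\mathfrak m$, the ratios $\lambda_i$ are the same numbers in either normalisation, and $S_Q=6\,S_{\mathrm{Park}}$.

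\paragraph{Labelling.} With the constants you read off from the table, the last two terms come out as $k^2/\lambda_2^2+l^2/\lambda_3^2$. You instead assert that they give exactly $l^2/\lambda_2^2+k^2/\lambda_3^2$, which contradicts your own values. The displayed formula is in Park's labelling, which differs from the paper's basis by interchanging $\mathfrak m_2$ and $\mathfrak m_3$ (equivalently $k$ and $l$). Like the factor $6$, this does not affect how the formula is used in Lemma \ref{lem:connctd}. You should still state the relabelling explicitly rather than claim an exact match.

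\paragraph{What each route buys.} The citation is shorter. Your route costs some bookkeeping, but it fixes the normalisation and the labelling that the paper leaves implicit.
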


We write this as 
\[
	S(\lambda) = f(\lambda) - g(\lambda),
\]
where $f$ is homogeneous of degree $-1$ and $g$ is homogeneous of degree $-2$. Consequently, for any $t > 0$

\[
	S(t\lambda) = \frac{1}{t}f(\lambda) - \frac{1}{t^2}g(\lambda).
\]
\begin{lemma} 
	The space of homogeneous metrics with positive scalar curvature is connected. 
	\label{lem:connctd}
\end{lemma}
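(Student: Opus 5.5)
The plan is to show that the set $P=\{\lambda\in(\mathbb{R}^+)^3 : S(\lambda)>0\}$ is star-shaped with respect to radial rescaling, and then that its image in the projectivization is connected. The space of homogeneous metrics with positive scalar curvature is $\{(a,b,c,d)\}$ with $S(\lambda)>0$ for $\lambda_i$ the ratios to $d^2$. Since $d$ ranges over a connected set, it suffices to prove that $P$ is connected.

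\emph{Radial structure.} Take $\lambda\in(\mathbb{R}^+)^3$ and write $S(t\lambda)=\frac{1}{t}f(\lambda)-\frac{1}{t^2}g(\lambda)$. Note that $g(\lambda)>0$ always. Hence $S(t\lambda)>0$ if and only if $t f(\lambda)>g(\lambda)$. This has a solution $t>0$ if and only if $f(\lambda)>0$, and in that case the solution set is the ray $t>g(\lambda)/f(\lambda)$. Therefore
\[
P=\{t\lambda : \lambda\in\Sigma,\ f(\lambda)>0,\ t>g(\lambda)/f(\lambda)\},
\]
where $\Sigma=\{\lambda_1+\lambda_2+\lambda_3=1,\ \lambda_i>0\}$ is the open simplex. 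The map $(\lambda,t)\mapsto t\lambda$ is a homeomorphism from the region $\{(\lambda,t):\lambda\in U,\ t>g/f\}$, with $U=\{\lambda\in\Sigma: f(\lambda)>0\}$, onto $P$. Since $g/f$ is continuous on $U$, this region fibres over $U$ with interval fibres. So $P$ is connected if and only if $U$ is connected.

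\emph{Connectedness of $U$.} On $\Sigma$ the sign of $f$ is the sign of the quadratic form
\[
Q(\lambda)=-(\lambda_1^2+\lambda_2^2+\lambda_3^2)+6(\lambda_1\lambda_2+\lambda_2\lambda_3+\lambda_3\lambda_1).
\]
I would show that $U$ is convex, which is the main point to check. Using $\lambda_1+\lambda_2+\lambda_3=1$,
\[
\sum\lambda_i^2 = 1-2\sigma_2, \qquad \sigma_2=\sum_{i<j}\lambda_i\lambda_j,
\]
so $Q=8\sigma_2-1$, and therefore $U=\{\sigma_2>1/8\}\cap\Sigma$. Equivalently, $U=\{\sum\lambda_i^2<3/4\}\cap\Sigma$, which is the intersection of an open Euclidean ball with the simplex. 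This set is convex, and it is nonempty because the barycenter gives $\sum\lambda_i^2=1/3<3/4$. So $U$ is connected, hence $P$ is connected, hence the space of homogeneous PSC metrics is connected.

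For the application to Lemma \ref{lem:nu}, I would then note that the homogeneous $G_2$-structures over a given metric in the coclosed family are parametrized by the signs of $a,b,c$ and by $x$. Fixing $a,b>0$ and the sign of $c$ fixes the orientation, so a path in $P$ lifts to a path of homogeneous $G_2$-structures with fixed orientation. I expect the only genuine obstacle to be the reduction of $f>0$ on the simplex to the ball condition. Everything else is the elementary radial argument above.
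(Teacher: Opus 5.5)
Your proof is correct. The first half matches the paper's. From $S(t\lambda)=f(\lambda)/t-g(\lambda)/t^2$ with $g>0$, a ray meets $\{S>0\}$ exactly when $f(\lambda)>0$, and then in the half-line $t>g/f$.

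The two arguments differ in how the cross-section is handled.
\begin{itemize}
\item The paper projects to the plane $\lambda_3=1$ and asserts, without justification, that the projection of $\{f>0\}$ is connected. It then uses that $\{S=0\}$ is a smooth surface projecting bijectively onto this set, so $\{S>0\}$, being a cone over $\{S=0\}$, is connected.
\item You slice by the simplex $\Sigma$ instead. There the identity $Q=8\sigma_2-1$ identifies $U$ with the nonempty convex set $\{\sum\lambda_i^2<3/4\}\cap\Sigma$. You then finish with the explicit homeomorphism $P\cong\{(\lambda,t):\lambda\in U,\ t>g/f\}\cong U\times(0,\infty)$.
\end{itemize}
Your version therefore proves the step the paper leaves implicit. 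That step is true because $Q$ has signature $(1,2)$, so $\{Q>0\}$ meets the positive octant in a convex cone; your computation is exactly this fact restricted to a slice. Your version also needs no smoothness of $\{S=0\}$.

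Your closing remark about lifting paths to $G_2$-structures lies outside this lemma, and it contains a slip. Replacing $c$ by $-c$ amounts to $x\mapsto x+\pi$ and preserves orientation. It is the sign of $d$, not of $c$, that reverses orientation. The paper handles that step differently, using that the unit sphere in $S^H$ is connected.
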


\begin{proof}%[Proof of Lemma \ref{lem:connctd}]
	Note that if for some $\lambda \in \mathbb{R}_+^3$ $f(\lambda) > 0$ then $S(t\lambda) > 0$ for all sufficiently large $t > 0$. Hence, the ray $\{t\lambda, t > 0\}$ intersects the set $\{S > 0\}$ in an unbounded interval. In particular, the set $\{S > 0\}$ is a cone over $\{S = 0\}$. Now, we project this set to the plane $\lambda_3 = 1$. The projection will be the same as projection of $\{f > 0\}$, which is connected. Since $S = 0$ is a smooth surface in $\mathbb{R}^3_+$ and its projection to the plane $\lambda_3 = 1$ is connected and one to one, the set \{S = 0\} is also connected. Since $\{S > 0\}$ is a cone over \{S = 0\}, it is also connected.
\end{proof}

\begin{proof}[Proof of Lemma \ref{lem:nu}]
	The argument is similar to the proof of the lemma \ref{lem:connctd}.

	Since both $\varphi_1$ and $\varphi_2$ are nearly-parallel, they induce metrics of positive scalar curvature. Additionally, $\widetilde{\pi}$ has at least two zero weights, implying that $\Sigma N_{k,l}$ has at least a 2-dimensional subspace of homogeneous sections. So, in particular, the space of homogeneous unit sections of the spinor bundle is connected. Consequently, we can find a path of $G_2$-structures connecting $\varphi_1$ and $\varphi_2$ through $G_2$-structures inducing the positive scalar curvature. 

	We also note that this argument shows that both $\varphi_1$ and $\varphi_2$ are homotopic to some homogeneous $G_2$-structure inducing the normal metric on $N_{k,l}$ (i.e. $a = b = c = d = 1$), since such a metric also has positive sectional curvature. 

	The corresponding formulas for the $\bar{\nu}$ and $\xi$ invariants now follow by their invariance under the deformations of $G_2$-structures inducing the metrics of positive scalar curvature. 
\end{proof}

\begin{remark}From the proof of Lemma \ref{lem:nu} we can see that to compute $\bar{\nu}$ invariants of $\varphi_i$, $i=1,2$ it is enough to compute them for some homogeneous $G_2$-structure inducing the normal metric. 
\end{remark}
%\begin{remark}
%	From the proof of Lemmas \ref{lem:nu} and \ref{lem:connctd} we can see that structures $-\varphi_-$ and $\varphi_+$ are homotopic through the path of $G_2$-structures inducing metrics with positive scalar curvatures. 
%\end{remark}

\section{Computations of the invariants}
\label{sect5}

In this section we compute the $I$ and $J$ terms appearing in the Goette's formulas for $\eta$ invariants in the case of the Aloff--Wallach spaces.

First we state the general formulas.

\begin{itemize}
	\item
Let $W_G$ denote the Weyl group of $G$, $\Delta^+_G$ the set of positive roots of $G$, $\mathfrak{t}, \mathfrak{s}$ be the maximal Cartan subalgebras inside $\mathfrak{g},\mathfrak{h}$. Let $\rho_G$ and $\rho_H$ be the half-sums of positive roots of $G$ and $H$. Let $\widehat{A}(z) = \tfrac{z/2}{\sinh(z/2)}$.

Take $E\in \mathfrak{s}^\perp \subset \mathfrak{h}$ be the positive unit vector, let $\delta \in -i\mathfrak{t}^*$ be the unique weight such that $-i\delta(E) > 0$ and $\delta(X) \in 2\pi i\mathbb{Z} \Leftrightarrow e^X \in S.$

Then, by \cite[Theorem 2.33]{Goette2009} the first term for the $\eta$ invariant of the Dirac operator is given as:
\begin{equation*}
\begin{aligned}
	I_D = 2 \sum_{w \in W_G} \frac{\sign(w)}{\delta(wX)} \biggl( &\prod_{\beta \in \Delta_G^+}\widehat{A}(\beta(wX)) e^{- \frac{\delta}{2}(wX)} -\\ 
	&- \prod_{\beta \in \Delta_G^+}\widehat{A}(\beta(wX|_\mathfrak{s}))e^{-\rho_H(wX|_{\mathfrak{s}})}\biggr) \cdot \prod\limits_{\beta \in \Delta^+_G}\frac{-1}{\beta(X)}\bigg|_{X = 0}.
\end{aligned}
\end{equation*}

Let $\hat{\widetilde{\pi}}: H \to \End(S)$ denote the action of $H$ inducing the $\Lambda^{ev}$ bundle on $G/H$. Let $\{\kappa_j\}$ be the weights of $\hat{\widetilde{\pi}}$. Take $\{\alpha_j\}$ to be unique weights in $i\mathfrak{t}^*$ such that ${\alpha_j}|_\mathfrak{s} = \kappa_j + \rho_H$ and $-i(\alpha_j-\delta)(E)< 0 \leqslant -i\alpha_j(E)$. We will call weight $\alpha_j$ a \textit{lift} of the weight $\kappa_j$.

Then, by \cite[Theorem 2.34]{Goette2009} the first term for the $\eta$ invariant of the odd-signature operator is given as:
\begin{equation*}
\begin{aligned}
	I_B = 2 \sum_j\sum_{w \in W_G} \frac{\sign(w)}{\delta(wX)} \biggl( &\prod_{\beta \in \Delta_G^+}\widehat{A}(\beta(wX)) e^{-\left(\alpha_j + \frac{\delta}{2}\right)(wX)} -\\
	    &- \prod_{\beta \in \Delta_G^+}\widehat{A}(\beta(wX|_\mathfrak{s})) e^{-(\kappa_j + \rho_H)(wX|_\mathfrak{s})}\biggr)\cdot \prod\limits_{\beta \in \Delta^+_G}\frac{-1}{\beta(X)}\bigg|_{X = 0}.
\end{aligned}
\end{equation*}

\item
	Consider two paths of $G$-equivariant Dirac operators $D^\lambda$ and $B^{\lambda,\mu}$ connecting the Dirac operator $D$ on $\Gamma(S)$ and the odd-signature operator $B$ on $\Omega^{ev}(M)$ induced by the Levi-Civita connection to their reductive counterparts $\widetilde{D}$ and $\widetilde{B}$ in the terminology of \cite{Goette1999} and \cite{Goette2009}. Using Frobenius reciprocity and Peter-Weyl theorem we write:
\[
	\Gamma(S) = \overline{\bigoplus\limits_{\gamma \in \widehat{G}} V^\gamma \otimes \Hom_H(V^\gamma, S)},
\]
\[\Omega^{ev}(M) = \overline{\bigoplus\limits_{\gamma \in \widehat{G}} V^\gamma \otimes \Hom_H(V^\gamma,S\otimes S)}.\]

For each summand above, we may write 
\begin{align}
	\label{eq:specflowD}
	D^\lambda|_{V^\gamma \otimes \Hom_H(V^\gamma, S)} &= \id_{V^\gamma} \otimes {}^\gamma D^\lambda, \\
	\label{eq:specflowB}
	B^{\lambda,\mu}|_{V^\gamma \otimes \Hom_H(V^\gamma,S\otimes S)} &= \id|_{V^\gamma} \otimes {}^\gamma{B}^{\lambda,\mu}.
\end{align}

The explicit formulas for ${}^\gamma D^\lambda$ and ${}^\gamma{B}^{\lambda,\mu}$ are given in the section \ref{secondterm}.

Then, $J$ terms are the spectral flow terms given as:
\begin{align}
	\label{eq:JD}
	J_D &= \sum\limits_{\gamma \in \widehat{G}} \chi_G^\gamma \cdot (\eta({}^\gamma D) - (\eta+h)( {}^\gamma\widetilde{D})),\\
	\label{eq:JB}
J_B &= \sum\limits_{\gamma \in \widehat{G}} \chi_G^\gamma \cdot (\eta({}^\gamma B) - (\eta+h)({}^\gamma \widetilde{B})).
\end{align}
\end{itemize}

Here, $h$ denotes the dimension of the kernel of the corresponding operator. 
\subsection{Computing first terms}
\label{firstterms}

We now consider the case of $G = SU(3)$, $H = S^1$ and define Cartan subalgebras as follows:
\[
\begin{aligned}
	\mathfrak{t} &= \{i\diag(x_1,x_2,x_3)\ | \ x_1 + x_2 + x_3 = 0\},\\
	\mathfrak{s} &= \{i\diag(kt,lt,-(k+l)t)\ | \ t \in \mathbb{R}\}. 
\end{aligned}
\]

Let $L_j \in i\mathfrak{t}^*$ be given by $L_j(i\diag(x_1,x_2,x_3)) = ix_j.$

The Weyl group $W_{SU(3)}$ is the symmetric group $S_3$. We pick the Weyl chamber $P_{SU(3)}= \{x_1 > x_2 > x_3\}$. With this choice the positive roots are:
\begin{equation*}
	\beta_1 = L_1 - L_2, \beta_2 = L_2 - L_3, \beta_3 = L_1 - L_3.
\end{equation*}

The half-sum of the weights is $\rho_G = \frac{1}{2}(\beta_1 + \beta_2 + \beta_3) = \beta_3.$ For $S^1$ we pick the Weyl chamber $P_{S^1} = \{t>0\}$. Since $H = S^1$, we have that $\rho_H = 0$. 

We now recall the orientation convention used in \cite{Goette2009}:

Let $\beta_1,\ldots, \beta_\ell$ be the positive roots of $\mathfrak{g}$, then one can choose a complex structure $I$ and a complex basis $z_1,\ldots, z_\ell$ such that $\ad|_{\mathfrak{t}\times \mathfrak{g}/\mathfrak{t}}$ takes 
\[
\ad_X = \begin{pmatrix}
\beta_1(X) &        & 0 \\
            & \ddots &   \\
0           &        & \beta_\ell(X)
\end{pmatrix}
\qquad \text{for all } X \in \mathfrak{t}.
\]
Then we declare the basis $z_1, I(z_1), \ldots z_\ell, I(z_\ell)$ of $\mathfrak{g}/\mathfrak{t}$ as a real vector space to be positive oriented.

In our case such choice of orientation corresponds to declaring the basis $(e_1,e_5,e_2,e_6,e_3,e_7)$ to be positive. Together with the orientation on $\mathfrak{m}$ this gives orientation on $\mathfrak{t}/\mathfrak{s} \cong \mathfrak{s}^\perp$:
\[
\mathfrak{m} = \mathfrak{g}/\mathfrak{t} \oplus (\mathfrak{t}/\mathfrak{s}).
\]
It now follows that the positive unit vector on $\mathfrak{s}^\perp$ is 
\[
	E = e_4.
\]

Thus,
\begin{equation*}
	\delta = \frac{1}{3}\left((2l+k)L_1 - (2k+l)L_2 + (k-l)L_3\right).
\end{equation*}

\begin{equation*}
 	\delta(x_1,x_2,-x_1-x_2) = i(lx_1 - kx_2).
\end{equation*}

Then the first term for the $\eta$ invariant of the Dirac operator can be expressed as:
\begin{equation*}
	I_D = 2 \sum_{w \in W_G} \frac{\sign(w)}{\delta(wX)} \left( \prod_{\beta \in \Delta_G^+}\widehat{A}(\beta(wX))\widehat{A}(\delta(wX))e^{- \frac{\delta}{2}(wX)} - 
\prod_{\beta \in \Delta_G^+}\widehat{A}(\beta(wX|_\mathfrak{s}))\right)\cdot \prod\limits_{\beta \in \Delta^+_G}\frac{-1}{\beta(X)}\bigg|_{X = 0}.
\end{equation*}

Let $\{\kappa_j\}$ be the weights of the representation of $\hat{\widetilde{\pi}}$ and $\{\alpha_j\}$ be their lifts. Then the first term of the $\eta$ invariant of the odd signature operator can be expressed as:

\begin{equation*}
\begin{aligned}
	I_B &= 2 \sum_j\sum_{w \in W_G} \frac{\sign(w)}{\delta(wX)} \biggl( \prod_{\beta \in \Delta_G^+}\widehat{A}(\beta(wX))\widehat{A}(\delta(wX))e^{-\left(\alpha_j - \frac{\delta}{2}\right)(wX)} -\\
	    &- \prod_{\beta \in \Delta_G^+}\widehat{A}(\beta(wX|_\mathfrak{s})) e^{-\kappa_j(wX|_\mathfrak{s})}\biggr)\cdot \prod\limits_{\beta \in \Delta^+_G}\frac{-1}{\beta(X)}\bigg|_{X = 0},
\end{aligned}
\end{equation*}

where the weights of $\hat{\widetilde{\pi}}$ are $(0,0, \pm i(k-l), \pm i(2k+l), \pm i(2l+k))$.

\begin{lemma}
	We have:
\begin{align*}
	I_D &= 	- \frac{81 kl(k+l)}{640(k^2+kl+l^2)^2} + \frac{1}{120} kl(k+l),\\
	I_B &= 8I_D + 2 =  2 -\frac{81 kl(k+l)}{80(k^2+kl+l^2)^2} + \frac{1}{15} kl(k+l).
\end{align*}
\label{lm:sum1}
\end{lemma}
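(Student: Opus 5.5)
The plan is to evaluate the two closed formulas for $I_D$ and $I_B$ from the beginning of Section~\ref{firstterms} directly, as limits $X\to 0$ of finite Weyl-group sums. I will parametrise $X = i\diag(x_1,x_2,-x_1-x_2)$, so that $\beta_1(X) = i(x_1-x_2)$, $\beta_2(X)=i(x_1+2x_2)$, $\beta_3(X)=i(2x_1+x_2)$ and $\delta(X) = i(lx_1-kx_2)$. The restriction $X|_{\mathfrak s}$ is the orthogonal projection onto $\mathfrak s$. Writing $\sigma = (k,l,-k-l)$, it is $\frac{\langle x,\sigma\rangle}{2q}\,\sigma$ with $q=k^2+kl+l^2$, so every $\beta(wX|_{\mathfrak s})$ is a multiple of the single linear form $t(wX)=\langle wX,\sigma\rangle/2q$.

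The denominator $\prod_\beta \beta(X)$ has degree $3$, and $1/\delta(wX)$ contributes one more order of pole. Hence in each bracket I need the Taylor expansion up to total degree $4$ in $(x_1,x_2)$. The whole $S_3$-alternating sum must then be organised so that the singular parts cancel. This cancellation is guaranteed by Goette's theorem, and I will use it as a check.

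\textbf{Step 1: expand each bracket.} I expand using $\widehat A(z) = 1 - z^2/24 + 7z^4/5760 + \dots$ and $e^{-\delta/2}$, keeping terms up to degree $4$. I then antisymmetrise over $W_G$. A standard trick simplifies this: $\sum_w \sign(w) f(wX)/\prod_\beta\beta(X)$ extracts, for a polynomial $f$, the coefficient against the Vandermonde $\prod_\beta \beta$. Only the $W$-anti-invariant part of degree exactly $3$ survives at $X=0$, after dividing by $\delta$.

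To handle the $1/\delta(wX)$ factor cleanly, I will write each bracket as $\delta(wX)\,g(wX)$ plus a remainder. The first product $\widehat A(\delta)e^{-\delta/2}\prod\widehat A(\beta)$ minus the second product vanishes on $\{\delta(wX)=0\}$ only up to higher order. So the division is best done by the substitution $X = X_{\mathfrak s} + sE$ (after moving $w$ onto the weights), which reduces each term to a one-variable expansion in $s$. The second summand then depends only on $X_{\mathfrak s}$, and subtracting it just removes the $s^0$ part.

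The resulting degree-$3$ anti-invariant coefficient is a rational function of $k,l$. I expect it to assemble into $\frac{1}{120}kl(k+l)$ from the $\widehat A$/exponential expansion in $\delta$, plus a term with $q^2$ in the denominator coming from the $\beta(X|_{\mathfrak s})$ factors, giving the stated
\[
-\tfrac{81\,kl(k+l)}{640\,q^2}.
\]

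\textbf{Step 2: compute $I_B$.} I list the ten weights $\kappa_j$: the double zero and $\pm(k-l)$, $\pm(2k+l)$, $\pm(2l+k)$ times $i$. For each I determine the lift $\alpha_j$ satisfying $\alpha_j|_{\mathfrak s}=\kappa_j$ and $-i(\alpha_j-\delta)(E)<0\le -i\alpha_j(E)$. Concretely, $\alpha_j$ is $\kappa_j$'s natural lift among the roots, namely $0$ or $\pm\beta_i$, shifted by a multiple of $\delta$ into the window $[0,\delta(E))$.

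I then redo the expansion with the extra factor $e^{-(\alpha_j-\delta/2)(wX)}$. Pairing $\kappa$ with $-\kappa$ and using that the spin weights are half-sums of tangent weights, I expect the sum over $j$ to reproduce $8I_D$: there are $8=\dim S$ copies of the $\widehat A$-integrand. The correction $+2$ should come from the two zero weights, or from lifts that jump by $\delta$, where the boundary term $e^{-\delta(wX)}$ contributes a constant.

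\textbf{Main obstacle.} The hard step is getting the lifts and the $\delta$-boundary contributions exactly right, since this is where the constant $2$ and the sign conventions (the orientation via $E=e_4$) enter. I will first verify the whole computation numerically for the small case $(k,l)=(1,1)$ by symbolic expansion. As a further consistency check I will use the $s$-invariant formula of Proposition~\ref{prop:homs}, which must reproduce $kl(k+l)/(2^5\cdot 7)$.
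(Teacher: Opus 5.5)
There is a genuine gap. Your framework matches the paper's: take the quartic Taylor coefficient of each bracket, divide by $\delta$, and alternate over $S_3$ against $\prod_\beta\beta$. But the proposal stops at ``I expect'' at every point where the lemma has content, and the one mechanism you offer for $I_B=8I_D+2$ is wrong.

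\textbf{The relation $I_B=8I_D+2$.} There are eight weights, not ten, and they are not eight copies of the $I_D$ integrand.
\begin{enumerate}
\item Only the two zero weights, lifted to $0$, reproduce $\widetilde I_D$, and they contribute no constant.
\item For a pair $\pm\kappa$ with lifts $\alpha$ and $\delta-\alpha$, where $\alpha\in\{\beta_1,-\beta_2,\beta_3\}$, the pair contributes the factor $2\cosh(\alpha-\delta/2)$ against $2\cosh\kappa$. Its quartic term is not that of $\widetilde I_D$.
\item The missing idea is the identity $U=2V$, i.e.\ $\sum_i\beta_i^4=2\sum_{i<j}\beta_i^2\beta_j^2$, together with the same identity for the restricted roots. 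It holds because $\beta_3=\beta_1+\beta_2$ and restriction to $\mathfrak s$ is linear.
\end{enumerate}
With $U=2V$ one gets
\[
\widetilde I_B-8\widetilde I_D=\tfrac{1}{12}\,\delta^2\sum_i\beta_i^2+\tfrac{1}{24}\Bigl(\sum_i\beta_i^2\Bigr)(\beta_1-\beta_2+\beta_3)\,\delta-\tfrac16\,(\beta_1^3-\beta_2^3+\beta_3^3)\,\delta .
\]
After dividing by $\delta$, the first two terms are a $W$-invariant times a linear form, so they alternate to zero. For the third, $\sum_w\sign(w)(\beta_1^3-\beta_2^3+\beta_3^3)(wX)=6\,\beta_1\beta_2\beta_3(X)$. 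Combined with the prefactor $2$ and $\prod_\beta(-1/\beta(X))$, this gives exactly $2$.

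So the constant comes from the odd part of $\tfrac1{24}(\alpha-\delta/2)^4$ for the nonzero lifts. It does not come from the zero weights or from a boundary term $e^{-\delta}$. Without $U=2V$, nothing in your sketch makes $I_B-8I_D$ independent of $(k,l)$.

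A smaller point: the bracket vanishes identically on $\ker\delta=\mathfrak s$, because $\alpha_j|_{\mathfrak s}=\kappa_j$. So the quartic coefficient is divisible by $\delta$ as a polynomial, and your substitution $X=X_{\mathfrak s}+sE$ is unnecessary.

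\textbf{The value of $I_D$.} Here too $U=2V$ is needed, to absorb $a^2V$ and give the coefficient $b+\tfrac{a^2}{2}=\tfrac1{480}$. The decisive reduction is that, in real variables with $\delta=lx_1-kx_2$, $Q=x_1^2+x_1x_2+x_2^2$ and $q=k^2+kl+l^2$,
\[
U=\tfrac{27}{q}\,Q\,\delta^2-\tfrac{81}{8q^2}\,\delta^4 .
\]
The $Q\delta^2$ term alternates to zero after division by $\delta$. Hence $I_D$ is a single multiple of the alternation of $\delta^3$, with coefficient
\[
\tfrac{1}{480}\bigl(-\tfrac{81}{8q^2}\bigr)+\bigl(b+\tfrac a8+\tfrac1{384}\bigr)=-\tfrac{81}{3840\,q^2}-\tfrac1{720}.
\]
Both contributions therefore carry the same sign.

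This is exactly the step your plan waves through, and it needs care. The paper's own intermediate values are $\tfrac{243}{8q^2}kl(k+l)$ for the alternation of $U/\delta$ and $-3kl(k+l)$ for that of $\delta^3$. With the coefficient $-\tfrac1{720}$, the $\delta^3$ part is $+\tfrac1{240}kl(k+l)$, but the final combination there writes $-\tfrac1{240}$. So the relative sign of the two terms in the stated $I_D$ must be rechecked against the conventions of the integrand displayed in Section~\ref{firstterms}, not used as a target.

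Finally, $(k,l)=(1,1)$ is excluded by the standing assumption $k\neq\pm l$, and its lifts are degenerate ($\delta=\beta_1$). It is a poor test case for the generic computation.
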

The proof of this lemma requires a long computation, which is given in Section \ref{sect:comps}.

\subsection{Spectra of deformed Dirac operators}
\label{secondterm}

In this section we compute the spectral flow terms appearing in Goette's formulas for $\eta$ invariants. This calculation is similar to the computations of the $\eta$ invariants of the Berger space $SO(5)/SO(3)$ in \cite{Goette2004}. The main result of this section is:

\begin{lemma}
The spectral flow terms for the Dirac operator and odd signature operator are
\[
	J_D = 0,
\]
\[
	J_B = -2.
\]
\label{lm:sum2}
\end{lemma}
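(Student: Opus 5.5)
We will follow the strategy of \cite{Goette2004} for the Berger space: decompose $\Gamma(S)$ and $\Omega^{ev}(M)$ into isotypic components via Frobenius reciprocity, as in \eqref{eq:specflowD} and \eqref{eq:specflowB}. On each summand we then track how the eigenvalues of ${}^\gamma D^\lambda$ and ${}^\gamma B^{\lambda,\mu}$ cross zero along the deformation from the Levi-Civita operator to the reductive operator. By the Atiyah--Patodi--Singer variation formula, each $J$ term equals minus the total spectral flow along the path, weighted by $\dim V^\gamma$ through $\chi^\gamma_G$. Only eigenvalues that change sign, or reach zero at the reductive end (the $h$ contribution), contribute.

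For $J_D$ we first write ${}^\gamma D^\lambda$ explicitly. In the orthonormal basis $e_1,\dots,e_7$ and the structure constants tabulated above, it acts on $\Hom_{S^1}(V^\gamma,S)$ by
\[
{}^\gamma D^\lambda=\sum_i c(e_i)\gamma(e_i)+\lambda\cdot\tfrac14\sum_{i<j<k}\langle[e_i,e_j]_{\mathfrak m},e_k\rangle c(e_ie_je_k).
\]
Here $\lambda=1/3$ gives the Levi-Civita Dirac operator of the normal metric and $\lambda=0$ the reductive one. Squaring gives, as in \cite{Goette2009},
\[
({}^\gamma D^\lambda)^2=\mathrm{Cas}_G(\gamma)-\mathrm{Cas}_H+(\text{a constant depending on }\lambda)+(\lambda-\tfrac13)\cdot(\text{cross terms}).
\]
Using this, we aim to show that $({}^\gamma D^\lambda)^2>0$ for $\lambda\in[0,1/3]$ for every $\gamma$. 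The Casimir estimate should dominate because $\mathrm{Cas}_G(\gamma)\ge\|\rho_G\|^2$-type bounds hold for representations containing the relevant $S^1$-weights $0,\pm(k-l),\pm(2k+l),\pm(2l+k)$. Positivity would mean no kernel anywhere along the path, so $h({}^\gamma\widetilde D)=0$ and there is no spectral flow, giving $J_D=0$. This is consistent with Lichnerowicz, since positive scalar curvature already forces $h(D)=0$ at $\lambda=1/3$.

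For $J_B$ we proceed similarly on $S\otimes S$, but now a kernel genuinely appears. The trivial representation $\gamma=\mathbf 1$ contributes $\Hom_H(\mathbb C,S\otimes S)=(S\otimes S)^{S^1}$, which contains the zero-weight pieces. The reductive operator $\widetilde B$ restricted to this finite-dimensional space is an explicit matrix built from the structure constants, and we compute its kernel and the signs of its small eigenvalues directly. We expect the harmonic forms of $N_{k,l}$ in degrees $0$ and $2$ (and their $*$-images) to give a two-dimensional kernel whose sign contributions yield $-2$ for $\gamma=\mathbf 1$. Next we must show that for nontrivial $\gamma$ no eigenvalue of ${}^\gamma B^{\lambda,\mu}$ crosses zero along the two-parameter path. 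We will try a Weitzenböck-type lower bound on $({}^\gamma B^{\lambda,\mu})^2$ by $\mathrm{Cas}_G(\gamma)$ minus a bounded correction, and check finitely many small $\gamma$ by hand.

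The main obstacle is this last step. The odd signature operator mixes form degrees, and the correction terms in $(B^{\lambda,\mu})^2$ are not obviously dominated by the Casimir for low-dimensional $\gamma$ such as the adjoint or standard representations of $SU(3)$ that contain small $S^1$-weights. For these cases we would explicitly diagonalize the finite matrices ${}^\gamma B^{\lambda,\mu}$, using the weight decomposition under the maximal torus to split them into small blocks, and verify that none of them becomes singular. Comparing with the expected answer via $I_B=8I_D+2$ from Lemma~\ref{lm:sum1} then serves as a consistency check: it should make $I_B+J_B$ free of the constant $2$.
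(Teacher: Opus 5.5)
You have the paper's overall architecture: the isotypic decomposition, the fact that only $\gamma=\mathbf{1}$ contributes to $J_B$, and the harmonic forms behind the $-2$. But there is a genuine gap. The step you flag as ``the main obstacle'' is where the proof actually lives, and your proposed route does not close it. Squaring does not give ``$\mathrm{Cas}_G(\gamma)$ minus a bounded correction''. Write ${}^\gamma B^{\lambda,\mu}={}^\gamma\widetilde B+P$; the cross term ${}^\gamma\widetilde B P+P\,{}^\gamma\widetilde B$ is first order and grows with $\gamma$. Also, ``checking finitely many small $\gamma$ by hand'' would have to be repeated for every coprime pair $(k,l)$.

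The missing observation is that $P$ is a zeroth-order endomorphism of $S\otimes S$ independent of $\gamma$, since all $\gamma$-dependence sits in $\sum_i c_i\gamma_i$. So you should compare eigenvalues, not squares. Along the straight path ${}^\gamma B^{\lambda,3\lambda-1}={}^\gamma\widetilde B+\mu B_0$ with $\mu\in[0,\tfrac12]$, the paper computes $\|B_0\|=2\sqrt2$, hence $\|\mu B_0\|\le\sqrt2$. Goette's exact formula ${}^\gamma\widetilde B^2=\|\gamma+\rho_G\|^2-c_H^{\hat{\widetilde\pi}}$, with $c_H^{\hat{\widetilde\pi}}\le 2$, gives ${}^\gamma\widetilde B^2\ge\tfrac23(p^2+pq+q^2)+2(p+q)\ge\tfrac83>2$ for every $\gamma=\gamma_{(p,q)}\neq\mathbf{1}$. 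Weyl's perturbation inequality then rules out any zero crossing, uniformly in $\gamma$ and $(k,l)$, with no case-by-case diagonalisation.

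The setup also has errors that would change the answer. You attach $\lambda=\tfrac13$ to the Levi-Civita operator and $\lambda=0$ (the operator $\sum_i c_i\gamma_i$) to the reductive one. In the normalisation ${}^\gamma D^\lambda=\sum_i c_i(\gamma_i+\lambda\widetilde{\ad}_{\mathfrak m,i})$, however, the Levi-Civita operator is $\lambda=\tfrac12$. Goette's $\widetilde D$ is $D^{1/3}$, Kostant's cubic Dirac operator, whose square has no cross terms; likewise $\widetilde B=B^{1/3,0}$. The $I$-terms are computed for these operators, so the $J$-terms must be spectral flows to exactly these endpoints. With this fixed, $J_D=0$ follows at once from Goette: $\eta(D)=\eta(\widetilde D)$, and $\ker\widetilde D=0$ because ${}^\gamma\widetilde D^2=\|\gamma+\rho_G\|^2>0$ when $\rho_H=0$.

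Finally, on the $10$-dimensional space $(S\otimes S)^{S^1}$ the kernel sits at the Levi-Civita end, not at $\widetilde B$. There ${}^{\gamma_0}\widetilde B$ is invertible with $\eta=2\sign(k)+2\sign(l)+2\sign(-k-l)=2$. By contrast, ${}^{\gamma_0}B$ has $h=2$ (harmonic $0$- and $2$-forms, since $b_4=b_6=0$) and $\eta=0$, so $J_B=0-(2+0)=-2$. Your claim that only sign changes or zeros at the reductive end contribute would miss exactly this term. Moreover, the sign of $-2$ is not dictated by the harmonic forms alone. It requires knowing that the two eigenvalues vanishing at the Levi-Civita end are positive for $\widetilde B$, which the paper reads off from the explicit $10\times 10$ matrices.
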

\begin{proof}%[Proof of Lemma \ref{lm:sum2}]
By \cite[Lemma 4]{Goette1999}, we have $\eta(D) = \eta(\widetilde{D})$, and by \cite[Lemma 1.17]{Goette1999} the kernel of $\widetilde{D}$ is trivial for $SU(3)/S^1$. Consequently, the spectral flow term from equation \eqref{eq:JD} is zero for the Dirac operator: 
\[
	J_D = 0.
\]

We now focus on the odd signature operator. Let $e_1,\ldots,e_7$ be the orthonormal basis of $\mathfrak{m}$ as in \ref{eq:basis}. Denote by $c_i$, $\hat{c}_i$ the Clifford multiplication by $e_i$ on the first and second factor of $\Lambda^{ev}\mathfrak{m} \cong S\otimes S$, respectively.

We define two maps $\widetilde{\ad}_{\mathfrak{m}}$ and $\widehat{\widetilde{\ad}}_{\mathfrak{m}}: \mathfrak{g} \to \End(\Lambda^{ev}\mathfrak{m})$ by 

\[
	\widetilde{\ad}_{\mathfrak{m}}(X) = \frac{1}{4}\sum_{i,j = 1}^7 \langle [X, e_i], e_j\rangle c_ic_j \text{ and } \widehat{\widetilde{\ad}}_{\mathfrak{m}}(X) = \frac{1}{4}\sum_{i,j = 1}^7 \langle [X, e_i], e_j\rangle \hat{c}_i\hat{c}_j,
\]
and set: 

\[
	\widetilde{\ad}_{\mathfrak{m}, i} = \widetilde{\ad}_{\mathfrak{m}}(e_i) \text{ and } \widehat{\widetilde{\ad}}_{\mathfrak{m},i} = \widehat{\widetilde{\ad}}_{\mathfrak{m}}(e_i).
\]
Then $\widetilde{\pi} = \widetilde{\ad}_{\mathfrak{m}}|_{\mathfrak{h}}$ and $\hat{\widetilde{\pi}} = \widehat{\widetilde{\ad}}_{\mathfrak{m}}|_{\mathfrak{h}}$ are the differentials of the representation of $H$ on the two factors of $S\otimes S$ that induce the bundle $\Lambda^{ev}\mathfrak{m}$.

Let $\gamma_i$ denote the action of $e_i$ on the dual of the representation space $V^\gamma$. Then the operators from equations \eqref{eq:specflowD} and \eqref{eq:specflowB} are defined in the following way:

\[
	{}^\gamma D^\lambda = \sum_{i=1}^7 c_i(\gamma_i + \lambda \widetilde{\ad}_{\mathfrak{m},i})
\]

\[
	{}^\gamma {B}^{\lambda,\mu} = \sum_{i=1}^7 c_i(\gamma_i + \lambda\widetilde{\ad}_{\mathfrak{m},i} + \mu\widehat{\widetilde{\ad}}_{\mathfrak{m},i}).
\]

Note that $D^{\tfrac{1}{2}} = D$ and $B^{\tfrac{1}{2},\tfrac{1}{2}}$ are respectively the Dirac operator and the odd signature operator associated to the Levi-Civita connection on $M$, while $D^{\frac{1}{3}} = \widetilde{D}$ and $B^{\frac{1}{3},0}$ are respectively the reductive Dirac operator and odd signature operators from the \cite{Goette1999}, \cite{Goette2009}.

Now, consider the one-parameter family $B^{\lambda,3\lambda-1}$ for $\lambda \in [\frac{1}{3}, \frac{1}{2}]$:

\[
	{}^\gamma {B}^{\lambda,3\lambda - 1} = {}^\gamma {\widetilde{B}} + \mu B_0.
\]

Where $B_0 = \sum_{i=1}^7 c_i\left(\frac{1}{3}\widetilde{\ad}_{\mathfrak{m},i} + \widehat{\widetilde{\ad}}_{\mathfrak{m},i}\right),$ and $\mu = 3\lambda-1$. 

The square of ${}^\gamma {\widetilde{B}}$ has been computed in  \cite[Lemma 1.17]{Goette1999}: 

\[
{}^\gamma {\widetilde{B}}^2 = ||\gamma + \rho_G||^2 - c_H^{\hat{\widetilde{\pi}}} - ||\rho_H||^2
\]

Since in our case $\rho_H = 0$, this simplifies to:
\[
{}^\gamma {\widetilde{B}}^2 = ||\gamma + \rho_G||^2 - c_H^{\hat{\widetilde{\pi}}}
\]

We now compute $c_H^{\hat{\widetilde{\pi}}}$. On the weight space $V_\mu$ of $S^1$ the Casimir operator is given as  $||\mu(h)||^2$ where $h$ is the unit generator of $\mathfrak{s}$ with respect to the norm induced from the embedding $\iota_{k,l}: S^1 \to SU(3)$. Hence, on the weight space $V_m$ the Casimir operator is $\frac{m^2}{2(k^2+kl+l^2)}$. In our case the weights of $\hat{\widetilde{\pi}}$ are $0, 0, \pm i(k-l), \pm i(2l+k), \pm i(2k+l)$. Thus, 
\[
	c_H^{\hat{\widetilde{\pi}}} \in \left\{0, \frac{(k-l)^2}{2(k^2+kl+l^2)}, \frac{(2k+l)^2}{2(k^2+kl+l^2)}, \frac{(2l+k)^2}{2(k^2+kl+l^2)} \right\}.
\] 
In particular,  
\[
	c_H^{\hat{\widetilde{\pi}}} \leqslant 2.
\]

Next we compute the $||\gamma + \rho_G||^2$ terms. Let $\gamma_{(p,q)}$ denote the irreducible representation of $SU(3)$ with the highest weight $pL_1 - qL_3$, then
\[
||\gamma_{(p,q)} + \rho_G||^2 = ||(p+1)L_1 - (q+1)L_3||^2 = \frac{2}{3}(p^2 + q^2 + pq) + 2(p+q) + 2.
\] 
Note that $||\gamma + \rho_G||^2 - c_H^{\hat{\widetilde{\pi}}}$ is always non-negative.

We now compute the eigenvalues of $B_0$ using the following model for $S$.
We identify $S$ with $\Lambda^* V$ where $V$ is 3-dimensional totally isotropic subspace with basis 
\[
	f_1 = \frac{1}{\sqrt2}(e_1 + ie_5), f_2 = \frac{1}{\sqrt2}(e_2 + ie_6), f_3 = \frac{1}{\sqrt2}(e_3 + ie_7).
\]

The Clifford multiplication is given as:
\begin{equation*}
	\begin{aligned}
	c_1 &= i(\varepsilon_1 + \iota_1)\\
	c_2 &= i(\varepsilon_2 + \iota_2)\\
	c_3 &= i(\varepsilon_3 + \iota_3)\\
	c_4 &= \pm i(-1)^{\deg}\\
	c_5 &= \iota_1 - \varepsilon_1\\
	c_6 &= \iota_2 - \varepsilon_2\\
	c_7 &= \iota_3 - \varepsilon_3.\\
	\end{aligned}
\end{equation*}

The choice of the sign for $c_4$ is given by the choice of orientation on $N_{k,l}$. We want the volume element $c_1c_5c_2c_6c_3c_7c_4$ to act as $\id$ on $S$. Since $c_1c_5c_2c_6c_3c_7$ acts as $i(-1)^{\deg}$, we have to choose $c_4 = -i(-1)^{\deg}$. 

In the basis \eqref{eq:basis}:

\begin{equation*}
	\begin{aligned}
	\widetilde{\ad}_{\mathfrak{m}, 1} &= \tfrac{1}{4}\left(-\sqrt2 c_2c_3 + \sqrt2 c_6c_7 - \tfrac{6(k+l)}{r}c_4c_5\right)\\
	\widetilde{\ad}_{\mathfrak{m}, 2} &= \tfrac{1}{4}\left(\sqrt2 c_1c_3 - \sqrt2 c_5c_7 + \tfrac{6k}{r}c_4c_6\right)\\
	\widetilde{\ad}_{\mathfrak{m}, 3} &= \tfrac{1}{4}\left(-\sqrt2 c_1c_2 + \sqrt2c_5c_6 + \tfrac{6l}{r}c_4c_7\right)\\
	\widetilde{\ad}_{\mathfrak{m}, 4} &= \tfrac{1}{4}\cdot \tfrac{6}{r}\left((k+l)c_1c_5 - kc_2c_6 -lc_3c_7\right)\\
	\widetilde{\ad}_{\mathfrak{m}, 5} &= \tfrac{1}{4}\left(\sqrt2c_2c_7 - \sqrt2c_3c_6 -\tfrac{6(k+l)}{r}c_1c_4\right)\\
	\widetilde{\ad}_{\mathfrak{m}, 6} &= \tfrac{1}{4}\left(-\sqrt2 c_1c_7 + \sqrt2c_3c_5 + \tfrac{6k}{r}c_2c_4\right)\\
	\widetilde{\ad}_{\mathfrak{m}, 7} &= \tfrac{1}{4}\left(\sqrt2c_1c_6 - \sqrt2 c_2c_5 + \tfrac{6l}{r}c_3c_4\right).\\
	\end{aligned}
\end{equation*}

Using the above model for spinors we compute the matrix $B_0$ acting on the space $S\otimes S$ and its eigenvalues in sympy. All of these computations can be found \href{https://github.com/Hypergu/Aloff-Wallach-spaces}{here}. The maximal absolute value of the eigenvalues of $B_0$ is $2\sqrt2$.

As we have seen before, 
\[
	({}^\gamma {\widetilde{B}})^2 \geqslant \frac{2}{3}(p^2 + q^2 + pq) + 2(p+q) \geqslant 2 = \lambda_{\max}^2\left(\frac{1}{2}B_0\right)
\]
for $(p,q) \neq (0,0)$. 

Consequently, the only irreducible representation $\gamma$ for which the sign of eigenvalues can change along the path ${}^\gamma {B}^{\lambda,3\lambda - 1}$ is the trivial representation $\gamma_0$. 

For the trivial representation, we explicitly compute the matrices ${}^{\gamma_0}{B}^{\frac{1}{3},0}$ and ${}^{\gamma_0}{B}^{\frac{1}{2},\frac{1}{2}}$ restricted to the $H$-invariant subspace $\Hom_H(V^{(0,0)},S\otimes S)$, which is 10-dimensional, with the basis given by:
\[
\left(
\begin{array}{r@{\,\otimes\,}l@{,\quad}r@{\,\otimes\,}l}
  1          & 1          & f_1f_2f_3  & 1,          \\[3pt]
  1          & f_1f_2f_3  & f_1f_2f_3  & f_1f_2f_3,  \\[3pt]
  f_1        & f_2f_3     & f_2f_3     & f_1,        \\[3pt]
  f_2        & f_1f_3     & f_1f_3     & f_2,        \\[3pt]
  f_3        & f_1f_2     & f_1f_2     & f_3
\end{array}
\right)
\]
\begin{remark}
	Note that this is only true for the general case $k \neq \pm l, l \neq \pm (k+l), k+l \neq \pm k$. In two exceptional cases $(1,-1,0)$ and $(1,1,2)$ the dimension of $\Hom_H(V^{(0,0)},S\otimes S)$ will be bigger and one would need to compute corresponding $\eta$ invariants separately. We discuss these cases in Section \ref{sect:except}.
\end{remark}
The corresponding $\eta$ invariants equal to:
\[
\eta({}^{\gamma_0}{B}^{\frac{1}{3},0}) = 2\sign(k) + 2\sign(l) + 2\sign(-k-l) = 2,\ h({}^{\gamma_0}{B}^{\frac{1}{3},0}) = 0.
\]
\[
\eta({}^{\gamma_0}{B}^{\frac{1}{2},\frac{1}{2}}) = 0,\ h({}^{\gamma_0}{B}^{\frac{1}{2},\frac{1}{2}}) = 2.
\]
The resulting difference is
\[
\eta({}^{\gamma_0}{B}^{\frac{1}{2},\frac{1}{2}}) - (\eta+h)({}^{\gamma_0}{B}^{\frac{1}{3},0}) = -2.
\]
Consequently,
\[
	J_B = -2.
\]
\end{proof}

\subsection{\texorpdfstring{Computing the $\bar{\nu}$ and $\xi$ invariants}{Computing the nu and xi invariants}}
Combining the results from the previous sections, we have:
\begin{proof}[Proof of Theorem \ref{thm:mainres}]
	Let $\alpha$ be some homogeneous $G_2$-structure inducing the normal metric and the given orientation. By the proof of Lemma \ref{lem:nu} we know, 
\begin{align*}
	\bar{\nu}(\alpha) &= \bar{\nu}(\varphi_1) = \bar{\nu}(\varphi_2),\\
	\xi(\alpha) &= \xi(\varphi_1) = \xi(\varphi_2).
 \end{align*}

By Proposition \ref{prop:homnu}, we know that 

\[
	\bar{\nu}(\alpha) = -24I_D + 3I_B - 24J_D + 3J_B.
\]

By Lemma \ref{lm:sum1} we have $-24I_D + 3I_B = 6$, and by \ref{lm:sum2} we have $-24J_D + 3J_B = -6.$

In total, we have \[
	\bar{\nu}(\varphi_1) = \bar{\nu}(\alpha) = 0.
\]

Using the formula for the $\xi$ invariant from Proposition \ref{prop:homxi}, equations \ref{eq:ID} and \ref{eq:IB} and Lemma \ref{lem:int}, we obtain:
\begin{align*}
	\xi(\varphi_1) &= \frac{45}{2}\left(I_B + J_B\right) - \frac{3}{8}\int_M p_1(\nabla^0)\wedge h(\nabla^0)\\
		     &= \frac{45}{2}\left(2 -\frac{81 kl(k+l)}{80(k^2+kl+l^2)^2} + \frac{1}{15} kl(k+l) - 2\right) + \frac{3}{8}\left(\frac{243kl(k+l)}{4(k^2+kl+l^2)^2}\right)\\
		     &= -\frac{729 kl(k+l)}{32(k^2+kl+l^2)^2} + \frac{3}{2}kl(k+l) + \frac{729 kl(k+l)}{32(k^2+kl+l^2)^2}\\
		     &= \frac{3}{2}kl(k+l).
\end{align*}
\end{proof}
\begin{proof}[Proof of Corollary \ref{cor:noneq}]
	If we have two pairs $(k,l)$ and $(k',l')$ such that $kl(k+l) \neq k'l'(k'+l')$, but $N_{k,l} \cong N_{k',l'}$, then the manifold $N_{k,l}$ admits $G_2$-structures induced by different homogeneous structures. By Theorem \ref{thm:mainres} we have $\xi(\varphi_{k,l}) = \frac{3}{2}kl(k+l) \neq \frac{3}{2}k'l'(k'+l') = \xi(\varphi_{k',l'})$ by the assumption, which implies that $G_2$-structures $\varphi_{k,l}$ and $\varphi_{k',l'}$ are not deformation equivalent.

	Consider example from Section \ref{sect:classif}: $N_{-4638861,582656} \cong N_{-2594149,5052965}$. We have:
	\[
		\xi(\varphi_{-4638861,582656}) = 16445032554770450432,
	\]
	while 
	\[
		\xi(\varphi_{-2594149,5052965}) = -48345771671661879296.
	\]
\end{proof}

\begin{remark}
	From Theorem \ref{thm:mainres} we can see that for a pair of Aloff--Wallach spaces $N_{k,l}$ and $N_{k',l'}$ one has:
    \[
    kl(k+l) - k'l'(k'+l') = 2^{5}\cdot 7^{\lambda(N)}\cdot 3\cdot q N,
    \]
    where $N= k^2+kl+l^2$ and $q \in \mathbb{Z}$. By the definition of $\lambda(N)$, we can write $N = 7^{1-\lambda(N)}\cdot r$ for $r \in \mathbb{N}$.

    Hence, 
    \[
    \xi(\varphi_{k,l}) - \xi(\varphi_{k',l'}) = \frac{3}{2}(kl(k+l) - k'l'(k'+l')) = 2^{4}\cdot 7\cdot 9\cdot q r.
    \]
    On the other hand, from \cite[Section 1.6]{Crowley2015}, we know that $\xi(\varphi_{k,l}) - \xi(\varphi_{k','l'}) = 14D(\varphi_{k,l},\varphi_{k',l'}),$ where $D : \pi_0(\mathcal{G}_2)\times \pi_0(\mathcal{G}_2) \to \mathbb{Z}$ is an affine difference defined in \cite[Section 3.1]{Crowley2015}

    So, in particular, we have: \[
	    D(\varphi_{k,l},\varphi_{k',l'}) = 72 \cdot qr = 72 \cdot q \cdot 7^{\lambda(N) - 1}N,
    \]
    where $N = |H^4(N_{k,l},\mathbb{Z})| = k^2 + kl + l^2$. It would be interesting to know if there are examples of manifolds where non-equivalent nearly-parallel $G_2$-structures can be \enquote{close} to each other. For example, are there examples of nearly-parallel $G_2$-structures $\varphi, \varphi'$ such that $D(\varphi,\varphi') = 1$?

    Additionally, note that $24\ |\ D(\varphi_{k,l},\varphi_{k',l'})$, which implies that $\nu(\varphi_{k,l}) - \nu(\varphi_{k',l'}) = 2D(\varphi_{k,l},\varphi_{k',l'}) = 0 \mod 48$, which is consistent with our computation of $\nu$.

\end{remark}

\subsection{\texorpdfstring{Computing the $s$ invariant}{Computing the s invariant}}
\label{sect:comp_s}
Using our results we also can compute the $s$ invariant for Aloff--Wallach spaces, which was computed in \cite{Kreck1993}.

\begin{align*}
	s(g) &= \frac{I_B + J_B}{2^5\cdot 7} + \frac{I_D + J_D}{2} - \frac{1}{2^7\cdot 7}\int_M p_1(\nabla^0)\wedge h(\nabla^0)\\
	     &= \frac{2 + 8I_D -2 + 2^4 \cdot 7 I_D}{2^5\cdot 7} + \frac{1}{2^7\cdot 7} \left(\frac{243kl(k+l)}{4(k^2+kl+l^2)^2}\right)\\
	     &= \frac{120I_D}{2^5 \cdot 7} + \frac{243kl(k+l)}{3584(k^2+kl+l^2)^2}\\
	     &= \frac{120}{2^5\cdot 7}\left(-\frac{81 kl(k+l)}{640(k^2+kl+l^2)^2} + \frac{1}{120} kl(k+l)\right) \frac{243kl(k+l)}{3584(k^2+kl+l^2)^2}\\
	     &= -\frac{243kl(k+l)}{3584(k^2+kl+l^2)^2} + \frac{1}{2^5\cdot 7}kl(k+l) + \frac{243kl(k+l)}{3584(k^2+kl+l^2)^2}\\
	     &= \frac{1}{2^5\cdot 7}kl(k+l).
\end{align*}

This is precisely the value computed in \cite{Kreck1993}.
\section{\texorpdfstring{$G_2$-structures associated with 3-Sasakian structures}{G2-structures associated with 3-Sasakian structures}}
\label{sect6}

Let $(M,g)$ be a 7-dimensional 3-Sasakian manifold. It is well-known that 3-Sasakian structure admits a 3-dimensional space of Killing spinors and hence a 2-sphere worth of nearly-parallel $G_2$-structures given by choosing a unit Killing spinor. We denote these structures by $\varphi_{ts}(x)$ for $x \in S^2$.

Moreover, from the data of 3-Sasakian structure one can construct a proper (in the sense that its space of Killing spinors is 1-dimensional) nearly-parallel $G_2$ structure called squashed nearly-parallel $G_2$ structure (cf. \cite{Friedrich1997}). We denote those by $\varphi_{sq}$.

We begin by recalling the definition of the 3-Sasakian structure.

\begin{definition}
	A 3-Sasakian structure on the manifold $(M,g)$ is a triple of vector fields $(V_1,V_2,V_3)$ such that the following is satisfied:
	\begin{enumerate}
		\item The vector $V_i$ defines Sasakian structure for each $i = 1,2,3$.
		\item The frame $(V_1,V_2,V_3)$ is orthonormal.
		\item For each permutation (i,j,k) of the sign $\delta:$ $\nabla_{V_i}V_j = (-1)^\delta V_k.$
		\item On the distribution orthogonal to $(V_1,V_2,V_3)$ the tensors $\phi_i = -\nabla V_i$ satisfy $\phi_i\phi_j = (-1)^\delta\phi_k.$
	\end{enumerate}
\end{definition}

A vector is called \textit{horizontal} if it is orthogonal to $V_i$ for $i = 1,2,3$. A vector is called \textit{vertical} if it lies in the span of $V_i$. 

For $t > 0$ define the canonical variation of the metric $g^t$:

$g^t(X,Y) = g(X,Y)$ if $X,Y$ are horizontal vector fields, and $g^t(V,W) = t^2g(V,W)$ if $V,W$ are vertical vectors. 

For $t = \frac{1}{\sqrt5}$ this metric is Einstein and admits proper nearly-parallel $G_2$ structure $\varphi_{sq}$.

\begin{lemma}
	The squashed nearly-parallel $G_2$ structure $\varphi_{sq}$ is homotopic to $\varphi_{ts}(x) \ \forall x$ along the path of $G_2$-structures inducing metrics with positive scalar curvature. 
	\label{lem:phisq}
\end{lemma}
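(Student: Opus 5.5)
The plan is to build an explicit path in two stages: first deform the metric along the canonical variation $g^t$ while keeping the spinor fixed, then rotate the spinor inside a bundle of spinors at the endpoint. Throughout, positivity of scalar curvature is the property to maintain.

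\textbf{Stage 1: the metric path.} I would use the canonical variation $g^t$, $t\in[\tfrac{1}{\sqrt5},1]$, which connects the squashed metric ($t=1/\sqrt5$) to the 3-Sasakian metric ($t=1$). For the scalar curvature I would invoke the standard O'Neill formula for this Riemannian submersion with totally geodesic fibres over the quaternionic-Kähler orbifold:
\[
\mathrm{scal}(g^t)=\mathrm{scal}_{\mathrm{base}}+\frac{1}{t^2}\mathrm{scal}_{\mathrm{fibre}}-t^2|A|^2 .
\]
For a 7-dimensional 3-Sasakian manifold this is $\mathrm{scal}(g^t)=48+\tfrac{6}{t^2}-12t^2$ (up to normalisation). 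This expression is positive for all $t\le 1$, since at $t=1$ it equals $42=\mathrm{scal}$ of an Einstein metric with constant $6$. Hence every $g^t$ with $t\in[1/\sqrt5,1]$ has positive scalar curvature.

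\textbf{Stage 2: transporting the spinor.} A $G_2$-structure is a pair $(g,\zeta)$, and the spinor bundles of $g^t$ for different $t$ are canonically identified via the isometry $g^t\to g$ that rescales the vertical directions. So I would first transport the squashed Killing spinor $\zeta_{sq}$ along this identification. This gives a path of $G_2$-structures $(g^t,\zeta_{sq}^t)$ of positive scalar curvature, ending at a $G_2$-structure $(g,\zeta')$ on the 3-Sasakian metric.

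\textbf{Stage 3: connecting $\zeta'$ to $\varphi_{ts}(x)$.} It remains to join $\zeta'$ to $\varphi_{ts}(x)$ within the spinors for the fixed metric $g$. Fixing $g$ keeps the scalar curvature positive automatically, so any path of unit spinors will do. The key point is to identify $\zeta'$ concretely. In Friedrich et al.'s construction, the squashed spinor and the three 3-Sasakian Killing spinors $\psi_1,\psi_2,\psi_3$ all lie in the rank-4 subbundle spanned by $\psi_0,\ldots,\psi_3$. Here $\psi_0$ is the canonical spinor, and the $\psi_i$ are obtained from $\psi_0$ by Clifford multiplication with $V_i$, which is invariant under the rescaling. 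Concretely, $\zeta_{sq}$ is proportional to $\psi_0$, up to a combination fixed by the construction. So the rank-4 bundle is trivialised by these four sections, and its unit sphere bundle has fibre $S^3$, which is connected. A path of unit spinors inside this bundle from $\zeta'$ to any $\psi(x)$, $x\in S^2$, therefore exists, for instance a great-circle arc written in the global frame. This completes the homotopy.

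The main obstacle is making Stage 3 rigorous. One must verify that the squashed spinor is a constant-coefficient combination in this frame, and that the frame is smooth and never vanishing. I would check both by recalling the explicit formulas from Friedrich--Kath--Moroianu--Semmelmann. If those formulas turn out to be messy, there is a fallback: since $\dim\ker$ is not at issue here, it suffices to pick any nowhere-zero trivial 2-plane bundle containing both spinors, in the same way as in the proof of Lemma~\ref{rmk:homstr}.
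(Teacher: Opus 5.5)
Your proposal is correct and follows essentially the paper's proof. The paper also moves along the canonical variation $g^t$, $t\in[1/\sqrt5,1]$, where $s_t=48+6/t^2-12t^2>0$, to carry $\varphi_{sq}$ to the canonical $G_2$-structure $\varphi_1$; your transported spinor is exactly its path $\varphi_t=F_1+F_2$, which is the pull-back of $\varphi_1$ under vertical rescaling. It then rotates $\zeta_0$ to $V\cdot\zeta_0$ at the fixed 3-Sasakian metric via $\cos s\,\zeta_0+\sin s\,V\cdot\zeta_0$, citing the Agricola--Friedrich fact that the $V_a\cdot\zeta_0$ span the Killing spinors. Pointwise orthonormality of $\zeta_0, V_1\cdot\zeta_0, V_2\cdot\zeta_0, V_3\cdot\zeta_0$ is automatic from the skew-adjointness of Clifford multiplication, so your frame needs no further checking, and your 2-plane-bundle fallback (which would not work for two arbitrary unit spinors) is unnecessary.
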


\begin{proof}%[Proof of Lemma \ref{lem:phisq}]
	All of the $\varphi_{ts}(x)$ are homotopic since they correspond to a choice of a unit Killing spinor associated to the 3-Sasakian structure, which is connected.

	Fix an orthonormal frame of the horizontal distribution $X_1,X_2,X_3,X_4$ and define $Z_a := V_a/t$. Following \cite[Theorem 5.4]{Friedrich1997} we define the path of $G_2$ 3-forms in the following way:

	\[
		\varphi_t = F_1 + F_2,
	\]
	where 
	\[
		F_1 = Z_1 \wedge Z_2 \wedge Z_3,
	\]
	\[
		F_2 = \sum_a Z_a \wedge \omega_a, \text{ and } \omega_a = \frac{1}{2} \sum_i X_i \wedge \nabla_{X_i}V_a. 
	\]

	The form $\varphi_t$ induces precisely the metric $g_t$ and gives the path between $\varphi_1$ and $\varphi_{\frac{1}{\sqrt5}} = \varphi_{sq}$. We also note that the scalar curvature stays positive along this path: according to \cite[section 13.3.3]{Boyer2007} the scalar curvature of the metric $g^t$ is 
	\[
		s_t = 48 + \frac{6}{t^2} - 12t^2.
	\]
Which is positive for $t \in \left[\tfrac{1}{\sqrt5}, 1\right]$.

Note that $\varphi_1$ is not the one of nearly-parallel $G_2$-structures induces from 3-Sasakian structure. 

	According to \cite{Agricola2010} the $G_2$ structure $\varphi_1$ (called the canonical $G_2$ structure associated to 3-Sasakian structure) admits a spinor field $\zeta_0$, which generates Killing spinors by taking the Clifford product with the horizontal vectors $V_a$. In particular, the space of Killing spinors is generated by $V_a \cdot \zeta_0$ for $a = 1,2,3$. 

	But since $X\cdot \zeta_0 \perp \zeta_0$ for any vector field $X$, these spinors can be continuously rotated one into another via
	\[
		\zeta_t = \zeta_0\cos t + V_a \cdot \zeta_0\sin t.
	\]
	Hence, the corresponding $G_2$-structures are homotopic, completing the proof. 
\end{proof}

\begin{remark}
	By the h-principle for coclosed $G_2$-structures \cite[Theorem 1.8]{Crowley2015}, it follows that $\varphi_{ts}$ and $\varphi_{sq}$ can be connected via the path of coclosed $G_2$-structures. Note that the path constructed in the Lemma \ref{lem:phisq} does not necessary stay in the space of coclosed $G_2$-structures, since the coclosed condition is not generally preserved under linear operations on the corresponding spinors. 
\end{remark}

\begin{ex} In particular, we can compute the $\bar{\nu}$ invariant of the squashed metric on $S^7$:
	\label{ex:sphere}
\begin{equation*}
	\label{eq:nuS7}
	\bar{\nu}(\varphi_{sq}(S^7)) = \bar{\nu}(\varphi_{std}(S^7)) = 1.
\end{equation*}

The last equality is due to \cite[Example 1.9]{Crowley2025}. 
\end{ex}
\section{\texorpdfstring{Pontryagin classes of homogeneous $G_2$ manifolds}{Pontryagin classes of homogeneous G2 manifolds}}
\label{sect7}

In this section we would like to gather some results regarding the first Pontryagin class of homogeneous nearly-parallel $G_2$ manifolds. In particular, as we will see all of the known examples have first Pontryagin class rationally trivial, which forces $\pi_0\bar{\mathcal{G}}_2 = \mathbb{Z}$ and implies that $\xi$ is a complete invariant of $\pi_0\bar{\mathcal{G}}_2$ and there exists an intrinsic formula for the $\xi$ invariant, which is computable in some special cases.

The main statement of this section is:

\begin{prop}
	Let $M$ be a 7-dimensional homogeneous space admitting the homogeneous nearly parallel $G_2$ structure. Then the first Pontryagin class $p_1(M)$ is a torsion class. 
	\label{prop:p1}
\end{prop}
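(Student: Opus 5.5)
We use the classification of $7$-dimensional homogeneous spaces carrying homogeneous nearly-parallel $G_2$-structures. Results of Friedrich--Kath--Moroianu--Semmelmann and the classification of simply connected homogeneous Einstein $7$-manifolds with real Killing spinors give a short list. Up to finite covers and quotients, and after passing to the universal cover (compact by Myers' theorem, since the metric is Einstein with positive constant), $M$ is one of the following:
\[
S^7=Spin(7)/G_2 \ (\text{or } SO(8)/SO(7),\ Sp(2)\times Sp(1)/Sp(1)\times Sp(1) \text{ squashed}),\quad SO(5)/SO(3),\quad N_{k,l}=SU(3)/S^1_{k,l},\quad N_{1,1}, \quad Q^{1,1,1}=SU(2)^3/U(1)^2,
\]
together with the $3$-Sasakian homogeneous examples ($S^7$, $\mathbb{RP}^7$, $N_{1,1}$). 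The plan is to reduce to the universal cover and then check the list case by case. Since rational Pontryagin classes pull back injectively under finite covers (by the transfer), $p_1$ is torsion on $M$ as soon as it is torsion on a finite cover. So it suffices to treat the simply connected models.

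For each model we then show $H^4(M;\mathbb{Q})=0$, which immediately makes $p_1$ torsion.
\begin{itemize}
\item For $S^7$ this is trivial.
\item For $SO(5)/SO(3)$ (the Berger space) it is a rational homology sphere, with $H^4=\mathbb{Z}_{10}$.
\item For $N_{k,l}$ we use the Gysin/Serre spectral sequence of $S^1\to SU(3)\to N_{k,l}$, or equivalently of the fibration $N_{k,l}\to SU(3)/T^2$. This gives $H^4(N_{k,l};\mathbb{Z})\cong\mathbb{Z}_{k^2+kl+l^2}$, consistent with Section~\ref{sect4}, where $N=|H^4|$.
\end{itemize}

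The main obstacle is $Q^{1,1,1}$ and any similar space with larger second Betti number. Here $H^2(Q^{1,1,1};\mathbb{Q})=\mathbb{Q}^2$, and $H^4(Q^{1,1,1};\mathbb{Q})\cong\mathbb{Q}^2$ by Poincaré duality with $H^3$. So $p_1$ need not vanish for cohomological reasons, and we must compute it directly. For a homogeneous space $G/H$, $p_1$ is the image under the Chern--Weil/Borel map $H^*(BH;\mathbb{Q})\to H^*(G/H;\mathbb{Q})$ of $p_1$ of the isotropy representation. The isotropy representation of $H=U(1)^2$ on $\mathfrak{m}$ is a trivial line plus three complex lines with weights $\alpha_1,\alpha_2,\alpha_3$, namely the roots of the three $SU(2)$ factors restricted to $\mathfrak t_H$. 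Hence
\[
p_1(\mathfrak{m}) = \alpha_1^2+\alpha_2^2+\alpha_3^2 \in H^4(BH;\mathbb{Q}).
\]
On the other hand, the kernel of $H^*(BH)\to H^*(G/H)$ contains the ideal generated by the positive-degree $W_G$-invariants of $H^*(BT_G)$ restricted to $\mathfrak t_H$. These invariants are $x_1^2,x_2^2,x_3^2$, where the $x_i$ are the Cartan coordinates of the three $SU(2)$'s, and $\alpha_i$ is the restriction of $2x_i$. Therefore $p_1(\mathfrak m)=4\sum x_i^2|_{\mathfrak t_H}$ lies in this ideal and vanishes rationally.

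The same argument works uniformly on every homogeneous space: $p_1$ is the restriction of $p_1(\mathfrak g)-p_1(\mathfrak h)$, and $p_1(\mathfrak g)$ dies. We expect to obtain
\[
p_1(G/H)=-\,p_1(\mathfrak h)\big|_{BH}.
\]
Here $p_1(\mathfrak h)=0$ when $H$ is abelian, which covers $N_{k,l}$ and $Q^{1,1,1}$. For the nonabelian isotropy cases, $H^4(M;\mathbb{Q})=0$ already suffices. The step requiring care is citing the classification precisely, so that no homogeneous example is missed.
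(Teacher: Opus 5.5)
Your overall strategy (reduce to the classification of homogeneous nearly parallel $G_2$-manifolds and check each case) is the same as the paper's. Your reduction to the simply connected cover via Myers and the transfer is an improvement, because the classification is stated for simply connected spaces and the paper applies it without comment.

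\textbf{The gap: your case list is incomplete.} It omits two homogeneous Sasaki--Einstein examples:
\begin{itemize}
\item $M^{3,2}=SU(3)\times SU(2)\times U(1)/(SU(2)\times U(1)\times U(1))$;
\item the Stiefel manifold $V_{5,2}=SO(5)/SO(3)$, with the standard embedding rather than the Berger one.
\end{itemize}
Both have non-abelian isotropy, so your formula $p_1(G/H)=-p_1(G\times_H\mathfrak h)$ does not settle them, and you give no $H^4$ computation for either. The repair is easy:
\begin{itemize}
\item $M^{3,2}$ is the circle bundle over $\mathbb{CP}^2\times\mathbb{CP}^1$ with Euler class $3a+2b$, and its Gysin sequence gives $H^4(M^{3,2};\mathbb{Z})\cong\mathbb{Z}_9$;
\item $V_{5,2}$ is the unit tangent bundle of $S^4$, so $H^4(V_{5,2};\mathbb{Z})\cong\mathbb{Z}_2$.
\end{itemize}
The paper avoids listing the non-proper cases at all. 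Any non-proper nearly parallel $G_2$-structure is Sasaki--Einstein, and LeBrun's result that Sasaki--Einstein $7$-manifolds have torsion $p_1$ covers all of them at once. Only the proper families (squashed $S^7$, $N_{k,l}$, Berger space) then need to be checked by hand.

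\textbf{A false claim about $Q^{1,1,1}$.} Poincar\'e duality pairs $H^4$ with $H^3$, and $H^2$ with $H^5$, so $b_2=2$ does not force $b_4=2$. In fact $b_3(Q^{1,1,1})=0$. In the Gysin sequence of the circle bundle over $(\mathbb{CP}^1)^3$ with Euler class $a+b+c$, cup product with $e$ maps $H^2$ to $H^4$ by a matrix of determinant $-2$. Hence $H^3(Q^{1,1,1};\mathbb{Q})=H^4(Q^{1,1,1};\mathbb{Q})=0$ and $H^4(Q^{1,1,1};\mathbb{Z})\cong\mathbb{Z}_2$.

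Your Chern--Weil/Borel argument for abelian isotropy is correct and still gives $p_1=0$ rationally, so no conclusion you draw is wrong. But the ``main obstacle'' does not exist: every space on the complete list has $H^4(M;\mathbb{Q})=0$, which is all the proposition needs.
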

\begin{proof}%[Proof of Proposition \ref{prop:p1}]
	The homogeneous nearly parallel $G_2$ manifolds were classified by \cite[Theorem 7.2]{Friedrich1997}. The only proper homogeneous examples are the squashed 7-sphere, Aloff--Wallach spaces and the Berger space. 

	Other homogeneous nearly parallel $G_2$ manifolds are not proper, hence they are Sasaki-Einstein and according to \cite[Proposition 2.2]{LeBrun2025} any Sasaki-Einstein 7-manifold has $p_1(M)$ a torsion class. 
	
	Next we compute $p_1$ explicitly for the proper cases. 

\begin{itemize}
	\item In the case of $S^7$, $p_1$ is trivially zero.
	\item According to \cite{Kruggel1997}, the fourth cohomology group of $N_{k,l}$ is $H^4(N_k,l) \cong \mathbb{Z}_{(k^2 + kl + l^2)}$, and the first Pontryagin class $p_1(N_{k,l})$ is zero. 
	\item Let $\xi_{m,n}$ be the vector bundle of the rank $4$ over $S^4$ with the Euler class $e(\xi_{m,n}) = n$ and first Pontryagin class $p_1(\xi_{m,n}) = 2(n+2m)$. Let $M_{m,n}$ be the corresponding $S^3$ bundle over $S^4$. Then, according to \cite{Goette2004} the Berger space $SO(5)/SO(3)$ is diffeomorphic to $M_{\mp 1, \pm 10}$. 
		The algebraic topological invariants of $M_{m,n}$ are computed in \cite{Crowley2003}: 
	\[H^4(M_{m,n}) \cong \mathbb{Z}_{n}, \ p_1(M_{m,n}) = 4m \in \mathbb{Z}_{n}.\] 
	Hence, \[p_1(SO(5)/SO(3)) = -4 \in \mathbb{Z}_{10}.\]
\end{itemize}

\end{proof}

In fact, as we can see, all of the known examples (including nonhomogeneous ones obtained from Sasaki-Einstein or 3-Sasakian structures) of nearly parallel $G_2$ manifolds have torsion $p_1$ class (although, sometimes for trivial reasons). In particular, $\xi$ invariant is a complete invariant of deformation classes of $G_2$-structures in these cases.
\appendix

\section{Computations}
In the appendix we carry out explicit computations required to prove Lemma \ref{lm:sum1}. In the first part we discuss the problem of lifting weights of $\hat{\widetilde{\pi}}$ from $i\mathfrak{s}$ to $i\mathfrak{t}$, which is required to compute $\eta$ of the odd signature operator $B$. In the second part we explicitly compute the sum $-24I_D + 3I_B$. 

\label{appx}
\subsection{\texorpdfstring{Computing lifts of the weights of $\hat{\widetilde{\pi}}$}{Computing lifts of the weights}}
\label{sect:lifts}
As we have seen in the section \ref{sect5} the weights of $\widetilde{\pi}$ are $0, \pm i(k-l), \pm i(2l+k), \pm i(2k+l)$.

In this section we compute lifts of the weights of $\widetilde{\pi}$. 

By the lift of the weight $\kappa \in i\mathfrak{s}^*$ to the weight $\alpha \in i\mathfrak{t}^*$ we understand the unique weight such that ${\alpha}|_\mathfrak{s} = \kappa + \rho_H$ and $-i(\alpha-\delta)(E)< 0 \leqslant -i\alpha(E)$ for $\delta, E$ as in \ref{sect5}.

 Obviously, by this definition $0$ lifts to $0$.

 Let $r:= ||(2l+k,-2k-l,k-l)|| = \sqrt{6(k^2 + kl + l^2)}.$ Then $\delta(E) = \frac{r}{3}$, $E = \frac{1}{r}(2l+k,-2k-l,k-l)$.

Note that under our assumptions on the integers $k,l$, we have $k+l \geqslant 3$. And, in particular, we have $k^2 + l^2 + (k+l)^2 > 3\cdot \max (k,l,k+l) = 3(k+l).$ Hence we have that
\[
	\delta(E) > \max\left(\frac{3(k+l)}{r}, \frac{3k}{r}, \frac{3l}{r}\right).
\]
\begin{itemize}
	\item consider the weight $i(k-l)$. Then the lift should be of the form $\beta_1 + m\delta$ for some $m \in \mathbb{Z}$. 
	\[
	0 \leqslant -i\beta_1(E) = \frac{3(k+l)}{r} < \delta(E).
	\]
	hence the lift is
	\[
		\alpha = \beta_1.
	\]
	
	\item consider the weight $-i(2l+k)$. Then the lift should be of the form $-\beta_2 + m\delta$ for some $m \in \mathbb{Z}$. 
	\[
	0 \leqslant i\beta_2(E)  = \frac{3k}{r} < \delta(E).
	\]
	hence, the lift is
	\[
		\alpha = -\beta_2.
	\]
	\item consider the weight $i(2k+l)$. Then the lift should be of the form $\beta_3 + m\delta$ for some $m \in \mathbb{Z}$. 
	\[
	0 \leqslant -i\beta_3(E) = \frac{3l}{r} < \delta(E).
	\]
	hence the lift is
	\[
		\alpha = \beta_3.
	\]
\end{itemize}

\begin{remark}
	\label{rmk:neglifts}
	Now, assume that weight $\kappa$ lifts to $\alpha$, i.e.
\[
	0 \leqslant -i\alpha(E) < -i\delta(E). 
\]

Consider the weight $-\kappa$, then 
\[
	 -i\delta(E) > -i(\delta(E) - \alpha(E)) > 0.
\]

Thus, the weight $-\kappa$ lifts to $\delta - \alpha$. In particular $-i(k-l)$ lifts to $\delta - \beta_1$, $-i(2k+l)$ lifts to $\delta - \beta_3$, and $i(2l+k)$ lifts to $\delta + \beta_2$. 
\end{remark}

\subsection{\texorpdfstring{Computing the $I$ terms}{Computing the I terms}}
\label{sect:comps}
In this section we give the proof of the Lemma \ref{lm:sum1} by examining the terms constituting expressions for $I_D$ and $I_B$.

The expressions for the $I_D$ and $I_B$ have an apparent singularity of the fourth order, which cancels out. Since the expressions are analytic it is enough to compute fourth power terms in the Taylor series of corresponding functions. 

We denote by $\widetilde{I}_D$ the fourth power term of 
\[
\left( \prod_{\beta \in \Delta_G^+}\hat{A}(\beta(wX))\widehat{A}(\delta(wX)) e^{- \frac{\delta}{2}(wX)} - 
\prod_{\beta \in \Delta_G^+}\hat{A}(\beta(wX|_\mathfrak{s}))\right)
\]
and by $\widetilde{I}_B$ the fourth power term of

\[
 \sum_j\biggl( \prod_{\beta \in \Delta_G^+}\widehat{A}(\beta(wX))\widehat{A}(\delta(wX)) e^{-\left(\alpha_j + \frac{\delta}{2}\right)(wX)} -
\prod_{\beta \in \Delta_G^+}\widehat{A}(\beta(wX|_\mathfrak{s})) e^{-\kappa_j(wX|_\mathfrak{s})}\biggr)
\]

In particular 
\[
	I_D = \sum_{w \in W_{SU(3)}} \frac{\sign(w)}{\delta(wX)}\widetilde{I}_D(wX)\prod\limits_{\beta \in \Delta^+_G}\frac{-1}{\beta(X)}\bigg|_{X = 0},
\]
and
\[
	I_B = \sum_{w \in W_{SU(3)}} \frac{\sign(w)}{\delta(wX)}\widetilde{I}_B(wX)\prod\limits_{\beta \in \Delta^+_G}\frac{-1}{\beta(X)}\bigg|_{X = 0}.
\]

Note that
\[\widehat{A}(z) = \frac{z/2}{\sinh{z/2}} = 1 - \frac{1}{24}z^2 + \frac{7}{5760}z^4 + \ldots \]

Denote $a = -\frac{1}{24}, b = \frac{7}{5760}$. 

Since we have three positive roots in the case of $SU(3)/S^1$, the terms that we need to compute have the following expression:

\[ \widehat{A}(\beta_1)\widehat{A}(\beta_2)\widehat{A}(\beta_3)\widehat{A}(\delta)e^{-\left(\alpha-\frac{\delta}{2}\right)} - \widehat{A}(\widetilde{\beta_1})\widehat{A}(\widetilde{\beta_2})\widehat{A}(\widetilde{\beta_3})e^{-\kappa}. 
\]

Fourth power term is given as: 

\begin{equation}
\begin{aligned}
&b\left(\beta_1^4 + \beta_2^4 + \beta_3^4 - \widetilde{\beta}_1^4 - \widetilde{\beta}_2^4 - \widetilde{\beta}_3^4 + \delta^4\right) + \\
	+& a^2\left(\beta_1^2\beta_2^2 + \beta_2^2\beta_3^2 + \beta_3^2\beta_1^2 - \widetilde{\beta}_1^2\widetilde{\beta}_2^2 - \widetilde{\beta}_2^2\widetilde{\beta}_3^2 - \widetilde{\beta}_3^2\widetilde{\beta}_1^2\right)+\\
	 +& a^2\left(\beta_1^2 + \beta_2^2 + \beta_3^2\right)\delta^2  
+ \frac{a}{2}\left(\beta_1^2 + \beta_2^2 + \beta_3^2 +\delta^2\right)\left(\alpha - \frac{\delta}{2}\right)^2 -\\
-& \frac{a}{2}\left(\widetilde{\beta}_1^2 + \widetilde{\beta}_2^2 + \widetilde{\beta}_3^2\right)\kappa^2 
+ \frac{1}{24}\left(\alpha - \frac{\delta}{2}\right)^4 
- \frac{1}{24}\kappa^4 = \\
=& b\left(\beta_1^4 + \beta_2^4 + \beta_3^4 - \widetilde{\beta}_1^4 - \widetilde{\beta}_2^4 - \widetilde{\beta}_3^4 + \delta^4\right) +\\
+& a^2\left(\beta_1^2\beta_2^2 + \beta_2^2\beta_3^2 + \beta_3^2\beta_1^2 - \widetilde{\beta}_1^2\widetilde{\beta}_2^2 - \widetilde{\beta}_2^2\widetilde{\beta}_3^2 - \widetilde{\beta}_3^2\widetilde{\beta}_1^2\right) + \\
+& a^2\left(\beta_1^2 + \beta_2^2 + \beta_3^2\right)\delta^2  
+ \frac{a}{2}\left(\beta_1^2 + \beta_2^2 + \beta_3^2 +\delta^2\right)\left(\alpha^2 - \alpha\delta + \frac{\delta^2}{4}\right) +\\
+& \frac{1}{24}\left(\alpha^4 - 2\alpha^3 \delta + \frac{3}{2}\alpha^2\delta^2 -\frac{1}{2}\alpha\delta^3 + \frac{1}{16}\delta^4\right)-\\ 
-& \frac{a}{2}\left(\widetilde{\beta}_1^2 + \widetilde{\beta}_2^2 + \widetilde{\beta}_3^2\right)\kappa^2 - \frac{1}{24}\kappa^4.
\end{aligned}
\label{eq:4term}
\end{equation}

Denote 
\[
	U := \beta_1^4 + \beta_2^4 + \beta_3^4 - \widetilde{\beta}_1^4 - \widetilde{\beta}_2^4 - \widetilde{\beta}_3^4,
\]
\[
	V := \beta_1^2\beta_2^2 + \beta_2^2\beta_3^2 + \beta_3^2\beta_1^2 - \widetilde{\beta}_1^2\widetilde{\beta}_2^2 - \widetilde{\beta}_2^2\widetilde{\beta}_3^2 - \widetilde{\beta}_3^2\widetilde{\beta}_1^2.
\]

In particular, since for the standard Dirac operator $\kappa = 0$ and $\alpha = 0$, we have:

\begin{equation*}
	\widetilde{I}_D = bU + a^2V +\left(a^2 + \frac{a}{8}\right)\left(\beta_1^2 + \beta_2^2 + \beta_3^2\right)\delta^2 + \left(b + \frac{a}{8} + \frac{1}{24\cdot 16}\right)\delta^4.
\end{equation*}

Now, consider $\widetilde{I}_B$. Recall that $\hat{\widetilde{\pi}}$ has eight weights. The two zero weights contribute two terms $\widetilde{I}_D$, since they lift to zeroes. The remaining non-zero weights occur in pairs $(\kappa_i, -\kappa_i)$. By remark \ref{rmk:neglifts}, if $\kappa_i$ lifts to $\alpha$, $-\kappa_i$ lifts to $\delta - \alpha$.

Observe that these weights appear in \eqref{eq:4term}, only as $\kappa_i^{\text{even}}$ and $(\alpha - \delta/2)^{\text{even}}$. Since $(\delta - \alpha - \delta/2) = -(\alpha-\delta/2)$, the even powers ensure that $\kappa_i$ and $-\kappa_i$ make identical contributions to $\widetilde{I}_B$. Recall that nontrivial weights of $\hat{\widetilde{\pi}}$ lift to $(\pm \beta_i, \delta\mp \beta_i)$. Taking the sum over the weights of $\hat{\widetilde{\pi}}$ and substituting the weights in the formula \eqref{eq:4term} we obtain:

\begin{align*}
	\widetilde{I}_B &= 2 \widetilde{I}_D + 2\biggl(3bU + 3a^2V + 3b\delta^4 + 3a^2\delta^2\left(\beta_1^2 + \beta_2^2 +\beta_3^2\right)+\\
	&+ \frac{a}{2}\left(\beta_1^2 + \beta_2^2 +\beta_3^2+\delta^2\right)\left(\left(\beta_1^2 + \beta_2^2 +\beta_3^2\right) - \left(\beta_1 - \beta_2 + \beta_3\right)\delta + \frac{3\delta^2}{4}\right)+\\
	&+ \frac{1}{24}\biggr(\left(\beta_1^4 + \beta_2^4 + \beta_3^4\right) - 2\left(\beta_1^3 - \beta_2^3 + \beta_3^3\right)\delta + \frac{3}{2}\left(\beta_1^2 + \beta_2^2 + \beta_3^2\right)\delta^2 - \\
	&- \frac{1}{2}\left(\beta_1 - \beta_2 + \beta_3\right)\delta^3 + \frac{3}{16}\delta^4\biggr) -\\
	&- \frac{a}{2}\left(\widetilde{\beta}_1^2 + \widetilde{\beta}_2^2 + \widetilde{\beta}_3^2\right)\left(\widetilde{\beta}_1^2 + \widetilde{\beta}_2^2 + \widetilde{\beta}_3^2\right) - \frac{1}{24}\left(\left(\widetilde{\beta}_1^4 + \widetilde{\beta}_2^4 + \widetilde{\beta}_3^2\right)\right) = \\
	&=2\widetilde{I}_D + 2\biggl( \left(3b +\frac{a}{2} + \frac{1}{24}\right)U + \left(3a^2 + a\right)V + \left(3b +\frac{3a}{8} + \frac{3}{16\cdot 24}\right)\delta^4 +\\
	&+ \left(3a^2 + \frac{7a}{8} + \frac{3}{48}\right)\delta^2\left(\beta_1^2+\beta_2^2+\beta_3^2\right) \\
	&- \frac{a}{2}\left(\beta_1^2 + \beta_2^2 + \beta_3^2\right)\left(\beta_1 - \beta_2 + \beta_3\right)\delta\\
	&- \frac{2}{24}\left(\beta_1^3 - \beta_2^3 + \beta_3^3\right)\delta + \left(-\frac{1}{48} -\frac{a}{2}\right)\left(\beta_1 - \beta_2 + \beta_3\right)\delta^3\biggr)\\
	&= 8\widetilde{I}_D + 2\biggl(\left(\frac{a}{2} + \frac{1}{24}\right)U + aV \\
	&+ \left(\frac{4a}{8} + \frac{3}{48}\right)\delta^2\left(\beta_1^2+\beta_2^2+\beta_3^2\right) \\
	&- \frac{a}{2}\left(\beta_1^2 + \beta_2^2 + \beta_3^2\right)\left(\beta_1 - \beta_2 + \beta_3\right)\delta\\
	&- \frac{1}{12}\left(\beta_1^3 - \beta_2^3 + \beta_3^3\right)\delta + \left(-\frac{1}{48} -\frac{a}{2}\right)\left(\beta_1 - \beta_2 + \beta_3\right)\delta^3\biggr)\\
	&= 8\widetilde{I}_D + \frac{1}{24}(U-2V) + \frac{1}{12}\delta^2\left(\beta_1^2+\beta_2^2+\beta_3^2\right)\\
	&+\frac{1}{24}\left(\beta_1^2 + \beta_2^2 + \beta_3^2\right)\left(\beta_1 - \beta_2 + \beta_3\right)\delta -\frac{1}{6}\left(\beta_1^3 - \beta_2^3 + \beta_3^3\right)\delta.
\end{align*}

\begin{lemma} We have that 
	\[
		U = 2V.
	\]
	\label{lm:u=2v}
\end{lemma}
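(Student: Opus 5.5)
The plan is to show that $U=2V$ holds separately for the unrestricted part (the $\beta_i$) and for the restricted part (the $\widetilde{\beta}_i$), using a single polynomial identity for three quantities that sum to zero.

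\emph{Reduction to a linear relation.} From Section \ref{firstterms} the positive roots are $\beta_1=L_1-L_2$, $\beta_2=L_2-L_3$ and $\beta_3=L_1-L_3$, so
\[
\beta_1+\beta_2-\beta_3=0
\]
as linear functionals on $\mathfrak{t}$. The restricted roots $\widetilde{\beta}_i=\beta_i|_{\mathfrak{s}}$ are obtained by composing with the linear inclusion $\mathfrak{s}\hookrightarrow\mathfrak{t}$. They therefore satisfy the same relation $\widetilde{\beta}_1+\widetilde{\beta}_2-\widetilde{\beta}_3=0$. This remains true after evaluating at $wX$ for any $w\in W_G$.

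\emph{The key identity.} Set $x=\beta_1$, $y=\beta_2$, $z=-\beta_3$. Then $x+y+z=0$. Since $U$ and $V$ involve only even powers, replacing $\beta_3$ by $-\beta_3$ changes nothing. It therefore suffices to prove
\[
x^4+y^4+z^4 = 2\left(x^2y^2+y^2z^2+z^2x^2\right)
\]
whenever $x+y+z=0$.

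To prove this, let $e_1,e_2,e_3$ be the elementary symmetric polynomials in $x,y,z$, and assume $e_1=0$. Newton's identities give $p_2=e_1^2-2e_2=-2e_2$. Hence $p_2^2=4e_2^2$. Next,
\[
\textstyle\sum x^2y^2=e_2^2-2e_1e_3=e_2^2 .
\]
Combining these,
\[
p_4=p_2^2-2\textstyle\sum x^2y^2=4e_2^2-2e_2^2=2e_2^2=2\textstyle\sum x^2y^2 .
\]
Alternatively, one can substitute $z=-(x+y)$ and expand both sides directly; each equals $2(x^2+xy+y^2)^2$.

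\emph{Conclusion.} Applying the identity to the triple $(\beta_1,\beta_2,-\beta_3)$ gives the first halves of $U$ and $V$ in the ratio $2$. Applying it to $(\widetilde{\beta}_1,\widetilde{\beta}_2,-\widetilde{\beta}_3)$ gives the same for the second halves. Subtracting the two yields $U=2V$ identically in $X$, and hence also at $wX$ for every Weyl group element $w$.

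There is no real obstacle here. The only points to check are that restriction to $\mathfrak{s}$ preserves the linear relation among the roots, and that the sign of $\beta_3$ is irrelevant because only even powers appear.
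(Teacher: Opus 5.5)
Your proof is correct, but it argues differently from the paper.

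The paper checks the identity by direct expansion in coordinates. It writes $\beta_1=x_1-x_2$, $\beta_2=x_1+2x_2$, $\beta_3=2x_1+x_2$, and writes the restricted roots as $z(k-l)$, $z(k+2l)$, $z(2k+l)$. It then expands all the quartics. The unrestricted part of $U$ has coefficients $18,36,54,36,18$, exactly twice the coefficients $9,18,27,18,9$ of the unrestricted part of $V$, and the same pattern holds in $k,l$ for the restricted parts.

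You instead isolate the one structural input, the root relation $\beta_1+\beta_2=\beta_3$. It is inherited by the $\widetilde{\beta}_i$ because restriction to $\mathfrak{s}$ is linear. You then apply the symmetric-function identity $p_4=2e_2^2=2\sum x^2y^2$, valid whenever $e_1=0$.

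What your route buys:
\begin{itemize}
\item It is shorter and needs no expansion.
\item It explains the paper's numbers: for the unrestricted triple $e_2^2=9(x_1^2+x_1x_2+x_2^2)^2$, so $p_4$ and $\sum x^2y^2$ are $18(\cdot)^2$ and $9(\cdot)^2$.
\item It makes clear that nothing about the specific subgroup $S^1_{k,l}$, or the explicit formula for $z$, enters. So it covers the exceptional spaces at no extra cost.
\end{itemize}

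What the paper's route buys: its coordinate expansion leads directly into the next step, the explicit computation of $U/\delta$ and its Weyl symmetrization, where those polynomials in $x_1,x_2,k,l$ are needed anyway.

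Your alternative check, substituting $z=-(x+y)$ to get $2(x^2+xy+y^2)^2$ on both sides, is the paper's computation written abstractly.
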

\begin{proof}%[Proof of Lemma \ref{lm:u=2v}]

Let 
\[
	z =  \frac{kx_1 + lx_2 +(k+l)(x_1+x_2)}{2(k^2+kl+l^2)},
\]
so that
\[
X|_{\mathfrak{s}} = z\cdot i(k,l,-k-l).
\]
Then,
\begin{align*}
	U &= \beta_1^4 + \beta_2^4 + \beta_3^4 - \widetilde{\beta}_1^4 - \widetilde{\beta}_2^4 - \widetilde{\beta}_3^4 =\\
	&= (x_1 - x_2)^4 + (2x_2 + x_1)^4 + (2x_1+x_2)^4 - z^4((k-l)^4 + (2l+k)^4 + (2k+l)^4) =\\
	&= (18x_1^4 + 36x_1^3x_2 + 54x_1^2x_2^2 + 36x_1x_2^3 + 18x_2^4) - z^4(18k^4 + 36k^3l + 54k^2l^2 + 36kl^3 + 18l^4).
\end{align*}

\begin{align*}
	(x_1 - x_2)^4 + (2x_2 + x_1)^4 + (2x_1+x_2)^4 &= 18x_1^4 + 36x_1^3x_2 + 54x_1^2x_2^2 + 36x_1x_2^3 + 18x_2^4,\\
	(k-l)^4 + (2l+k)^4 + (2k+l)^4) &= 18k^4 + 36k^3l + 54k^2l^2 + 36kl^3 + 18l^4.
\end{align*}

\begin{align*}
	V =& \beta_1^2\beta_2^2 + \beta_2^2\beta_3^2 + \beta_3^2\beta_1^2 - \widetilde{\beta}_1^2\widetilde{\beta}_2^2 - \widetilde{\beta}_2^2\widetilde{\beta}_3^2 - \widetilde{\beta}_3^2\widetilde{\beta}_1^2 = \\
	=& (x_1-x_2)^2(2x_2+x_1)^2 + (2x_2+x_1)^2(2x_1+x_2)^2 + (2x_1+x_2)^2(x_1-x_2)^2 -\\
	-& z^4((k-l)^2(2l+k)^2 + (2l+k)^2(2k+l)^2 + (2k+l)^2(k-l)^2) =\\
	=& (9x_1^4 + 18x_1^3x_2 + 27x_1^2x_2^2 + 18x_1x_2^3 + 9x_2^4) 
	- z^4(9k^4 + 18k^3l + 27k^2l^2 + 18kl^3 + 9l^4).
\end{align*}

Thus, we can see that $U = 2V$. 
\end{proof}
Hence, we have that 
\begin{align*}
	\widetilde{I}_B = 8\widetilde{I}_D  + \frac{1}{12}\delta^2\left(\beta_1^2+\beta_2^2+\beta_3^2\right) &+\frac{1}{24}\left(\beta_1^2 + \beta_2^2 + \beta_3^2\right)\left(\beta_1 - \beta_2 + \beta_3\right)\delta\\
														      &-\frac{1}{6}\left(\beta_1^3 - \beta_2^3 + \beta_3^3\right)\delta.
\end{align*}
Hence, 

\begin{align*}
	\frac{1}{\delta}\left(\widetilde{I}_B - 8\widetilde{I}_D\right) = \frac{1}{12}(\beta_1^2 + \beta_2^2 +\beta_3^2)\delta &+ \frac{1}{24}(\beta_1^2 + \beta_2^2 + \beta_3^2)(\beta_1 - \beta_2 + \beta_3) - \\
	&- \frac{1}{6}(\beta_1^3 - \beta_2^3 + \beta_3^3).
\end{align*}

It is easy to see via direct computation that after symmetrization over $W_{SU(3)} = S_3$ only the $(\beta_1^3 - \beta_2^3 + \beta_3^3)$ term gives nonzero value $6$, so:

\begin{align*}
	I_B - 8I_D &= 2\cdot \sum_{w \in W_G} \sign(w)\frac{\widetilde{I}_B(wX)-24\widetilde{I}_D(wX)}{\delta(wX)} \cdot \prod_{\beta \in \Delta_+} \frac{-1}{\beta(X)} =\\
		      &= 2 \cdot \left(\frac{-6}{6}\right)(-1)^3 = 2.
\end{align*}

In particular, we have

\begin{equation}
    I_B = 8I_D + 2.
\end{equation}

So, it is sufficient to compute only $I_D$.

Now, we compute the value of $I_D$, which is also enough to compute $I_B$. Recall
\begin{align*}
	\widetilde{I}_D &= (b + a^2/2)U +\left(a^2 + \frac{a}{8}\right)\left(\beta_1^2 + \beta_2^2 + \beta_3^2\right)\delta^2 + \left(b + \frac{a}{8} + \frac{1}{24\cdot 16}\right)\delta^4,\\
	\widetilde{I}_D/\delta &= (b + a^2/2)U/ +\left(a^2 + \frac{a}{8}\right)\left(\beta_1^2 + \beta_2^2 + \beta_3^2\right)\delta + \left(b + \frac{a}{8} + \frac{1}{24\cdot 16}\right)\delta^3.
\end{align*}

\begin{enumerate}
	\item A long computation gives 
\begin{align*}
	U &= \frac{18}{16(k^2+kl+l^2)^4}\cdot \bigg((24 k^{6} l^{2} + 72 k^{5} l^{3} + 135 k^{4} l^{4} + 150 k^{3} l^{5} + 117 k^{2} l^{6} + 54 k l^{7} + 15 l^{8})x_1^4 \\
	  &+ (- 48 k^{7} l - 120 k^{6} l^{2} - 180 k^{5} l^{3} - 120 k^{4} l^{4} - 12 k^{3} l^{5} + 72 k^{2} l^{6} + 60 k l^{7} + 24 l^{8})x_1^3x_2 \\
	  &+ (24 k^{8} + 24 k^{7} l - 30 k^{6} l^{2} - 156 k^{5} l^{3} - 210 k^{4} l^{4} - 156 k^{3} l^{5} - 30 k^{2} l^{6} + 24 k l^{7} + 24 l^{8})x_1^2x_2^2 \\
	  &+ (24 k^{8} + 60 k^{7} l + 72 k^{6} l^{2} - 12 k^{5} l^{3} - 120 k^{4} l^{4} - 180 k^{3} l^{5} - 120 k^{2} l^{6} - 48 k l^{7})x_1x_2^3\\
	  &+ (15 k^{8} + 54 k^{7} l + 117 k^{6} l^{2} + 150 k^{5} l^{3} + 135 k^{4} l^{4} + 72 k^{3} l^{5} + 24 k^{2} l^{6})x_2^4\bigg).
\end{align*}

One can check that 
\begin{align*}
	U/\delta &= \frac{18}{16(k^2+kl+l^2)^4} \cdot \bigg( ( 24 k^{6} l + 72 k^{5} l^{2} + 135 k^{4} l^{3} + 150 k^{3} l^{4} + 117 k^{2} l^{5} + 54 k l^{6} + 15 l^{7})x_1^3 \\
		 &+ (- 24 k^{7} - 48 k^{6} l - 45 k^{5} l^{2} + 30 k^{4} l^{3} + 105 k^{3} l^{4} + 126 k^{2} l^{5} + 75 k l^{6} + 24 l^{7})x_1^2x_2 \\
		 &+ (- 24 k^{7} - 75 k^{6} l - 126 k^{5} l^{2} - 105 k^{4} l^{3} - 30 k^{3} l^{4} + 45 k^{2} l^{5} + 48 k l^{6} + 24 l^{7})x_1x_2^2 \\
		 &+ (- 15 k^{7} - 54 k^{6} l - 117 k^{5} l^{2} - 150 k^{4} l^{3} - 135 k^{3} l^{4} - 72 k^{2} l^{5} - 24 k l^{6})x_2^3 \bigg).
\end{align*}

After symmetrization over the Weyl group the $U$-term gives us:

\[
	\frac{18\cdot 27 (k^{6} l + 3 k^{5} l^{2} + 5 k^{4} l^{3} + 5 k^{3} l^{4} + 3 k^{2} l^{5} + k l^{6})}{16(k^2+kl+l^2)^4} = \frac{18\cdot 27\cdot kl(k+l)}{16(k^2+kl+l^2)^2}
\]

\item The second term is 
	\begin{align*}
		\left(\beta_1^2 + \beta_2^2 + \beta_3^2\right)\delta &= 6 l x_{1}^{3}- 6 (k-l) x_{1}^{2} x_{2} - 6 (k-l) x_{1} x_{2}^{2} - 6 k x_{2}^{3},
	\end{align*}
	Which after symmetrization over the Weyl group second term gives zero. 

\item The last term is 
	\[\delta^3 = l^{3} x_{1}^{3} - 3 k l^{2} x_{1}^{2} x_{2} + 3 k^{2} l x_{1} x_{2}^{2} - k^{3} x_{2}^{3},\]
	which gives $-3kl(k+l)$ after symmetrization.
\end{enumerate}

So, symmetrizing $\widetilde{I}_D$ gives us 
\[
	\frac{1}{480} \cdot\frac{18\cdot 27\cdot kl(k+l)}{16(k^2+kl+l^2)^2} - \frac{1}{240} kl(k+l)
\]
Since, 
\[
	I_D = \sum_{w \in W_{SU(3)}} \frac{\sign(w)}{\delta(wX)}\widetilde{I}_D(wX)\prod\limits_{\beta \in \Delta^+_G}\frac{-1}{\beta(X)}\bigg|_{X = 0},
\]
we have that
\begin{align}
	I_D &= - \left(\frac{81 kl(k+l)}{640(k^2+kl+l^2)^2} - \frac{1}{120} kl(k+l)\right)\notag \\
	&= - \frac{81 kl(k+l)}{640(k^2+kl+l^2)^2} + \frac{1}{120} kl(k+l).
	\label{eq:ID}
\end{align}

Then, 
\begin{equation}
	I_B = 2 -\frac{81 kl(k+l)}{80(k^2+kl+l^2)^2} + \frac{1}{15} kl(k+l).
	\label{eq:IB}
\end{equation}
\subsection{\texorpdfstring{Computing $\int_M p_1 \wedge h$}{Computing the integral}}
\label{sect:int}
In this section we compute the value of $\int_M p_1\left(\nabla^{0}\right) \wedge h\left(\nabla^0\right)$ for Aloff--Wallach spaces.

\begin{lemma}
	\label{lem:int}
	Under the orientations choices from the Section \ref{sect4}, we have
\begin{equation}
    \int_{N_{k,l}} p_1\left(\nabla^0\right) \wedge h\left(\nabla^0\right) = -\frac{243kl(k+l)}{4(k^2+kl+l^2)^2}.
\end{equation}
\end{lemma}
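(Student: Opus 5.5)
The plan is to compute the integral in the Chevalley--Eilenberg model of $G$-invariant forms on $N_{k,l}=SU(3)/S^1_{k,l}$. Every quantity involved is $SU(3)$-invariant, so the integral becomes a finite linear-algebra problem on $(\Lambda^*\mathfrak{m}^*)^H$, multiplied by $\mathrm{Vol}(N_{k,l})$.

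\emph{Step 1: the Pontryagin form.} The curvature of the reductive connection $\nabla^0$ on $TN_{k,l}=SU(3)\times_\pi\mathfrak{m}$ is
\[
R^0(X,Y)=-\pi\bigl([X,Y]_{\mathfrak{h}}\bigr),\qquad X,Y\in\mathfrak{m}.
\]
Put $\omega(X,Y):=\langle [X,Y],e_8\rangle$. This is the curvature of the principal $S^1$-bundle $SU(3)\to N_{k,l}$ with connection $e_8^*$, and it is a closed invariant basic $2$-form. Under $\pi$ we have
\[
\mathfrak{m}=\mathbb{R}e_4\oplus\langle e_1,e_5\rangle\oplus\langle e_2,e_6\rangle\oplus\langle e_3,e_7\rangle,
\]
with weights $0,\ k-l,\ 2l+k,\ -2k-l$ with respect to the normalized generator $e_8$. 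Hence
\[
p_1(\nabla^0)=\frac{1}{4\pi^2}\cdot\frac{(k-l)^2+(2l+k)^2+(2k+l)^2}{2(k^2+kl+l^2)}\,\omega\wedge\omega .
\]
Since $(k-l)^2+(2l+k)^2+(2k+l)^2=6(k^2+kl+l^2)$, the coefficient is $3/(4\pi^2)$. The signs of the $2\pi$ factors still have to be fixed against the orientation convention of Section \ref{firstterms}. From the bracket table, $\omega$ is an explicit combination of $f_1\wedge f_5$, $f_2\wedge f_6$, $f_3\wedge f_7$ with coefficients $(k-l)/r'$, $(2l+k)/r'$, $(-2k-l)/r'$, where $r'=\sqrt{2(k^2+kl+l^2)}$.

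\emph{Step 2: an invariant primitive.} Since $H^4(N_{k,l};\mathbb{R})=0$ and invariant forms compute de Rham cohomology of $G/H$ for compact $G$, there is an invariant $3$-form $h$ with $dh=p_1(\nabla^0)$. I would look for $h$ in the span of the $H$-invariant $3$-forms
\[
f_4\wedge f_1\wedge f_5,\quad f_4\wedge f_2\wedge f_6,\quad f_4\wedge f_3\wedge f_7,
\]
together with the invariant forms built from $\mathrm{Re}$ and $\mathrm{Im}$ of $(f_1+if_5)\wedge(f_2+if_6)\wedge(f_3+if_7)$; the latter are invariant because the three weights sum to zero. The differentials follow from $d\alpha(X,Y,Z)=-\alpha([X,Y]_{\mathfrak{m}},Z)\pm\cdots$ and the bracket table. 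I expect the equation $dh=\tfrac{3}{4\pi^2}\,\omega\wedge\omega$ to be solved already by combinations of the $f_4\wedge f_i\wedge f_{i+4}$, because $d f_4$ is a multiple of the $f_i\wedge f_{i+4}$ terms with coefficients $3(k+l)/r$, $3k/r$, $3l/r$. Any two solutions differ by a closed invariant $3$-form, and $\int_N p_1\wedge h$ is independent of that choice, because $p_1$ is exact and $N$ is closed.

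\emph{Step 3: integration.} The integrand $p_1(\nabla^0)\wedge h$ is an invariant $7$-form $C\,f_1\wedge f_5\wedge f_2\wedge f_6\wedge f_3\wedge f_7\wedge f_4$, so the integral equals $C\cdot\mathrm{Vol}(N_{k,l})$. For the metric $-\tr$ we have $\mathrm{Vol}(N_{k,l})=\mathrm{Vol}(SU(3))/\mathrm{length}(S^1_{k,l})$, where $\mathrm{length}(S^1_{k,l})=2\pi\sqrt{2(k^2+kl+l^2)}$. The volume $\mathrm{Vol}(SU(3))$ is the known value for this normalization, and I would check it via $SU(3)\to S^5$ with fiber $SU(2)$.

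\emph{Main obstacle.} The main obstacle is bookkeeping of normalizations and signs. Three things must be pinned down: the sign of $p_1$, which is $-c_2$ of the complexification; the identification of $\omega$ with $2\pi$ times an integral class; and the orientation $(e_1,e_5,e_2,e_6,e_3,e_7,e_4)$ of Section \ref{firstterms}. The factors of $\pi$ must cancel against $\mathrm{Vol}(SU(3))$. As a consistency check I would require that the result makes the combinations in Propositions \ref{prop:homs} and \ref{prop:homxi} rational, with the $(k^2+kl+l^2)^{-2}$ terms cancelling. That check fixes the sign and reproduces $-\tfrac{243\,kl(k+l)}{4(k^2+kl+l^2)^2}$.
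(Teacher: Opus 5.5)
Your route is the same as the paper's: the curvature $-\pi([X,Y]_{\mathfrak h})$ of $\nabla^0$, then $p_1(\nabla^0)=\frac{3}{4\pi^2}\,\omega\wedge\omega$ (your coefficient is correct), then an invariant primitive, then multiplication by $\Vol(N_{k,l})=8\sqrt3\pi^4/\sqrt{2N}$ with $N=k^2+kl+l^2$. The problem is that you stop before the one nontrivial step, and the device you offer for fixing the sign and constant is not admissible. Requiring the $N^{-2}$ terms to cancel in Propositions \ref{prop:homs} and \ref{prop:homxi} is circular. That cancellation is exactly what this lemma is used to establish in Theorem \ref{thm:mainres}, and nothing forces it a priori; rationality is automatic for such expressions. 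The only non-circular version would import the Kreck--Stolz value of $s$, and then you are no longer computing the integral. Your sign worries are also misdirected. The integral $\int p_1\wedge h$ is quadratic in $p_1$, since $p_1\mapsto -p_1$ forces $h\mapsto -h$. The factors of $2\pi$ are fixed by $p_1=-\frac{1}{8\pi^2}\tr R^2$ and cancel against $\Vol(SU(3))=16\sqrt3\pi^5$; no integrality of $\omega$ is needed. Only the orientation, with $e^{1526374}$ positive, affects the sign.

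The expectation in Step 2 is also false. Put $\sigma_1=e^{15}$, $\sigma_2=e^{26}$, $\sigma_3=e^{37}$ and $\Omega=(e^1+ie^5)\wedge(e^2+ie^6)\wedge(e^3+ie^7)$. Then
\[
\omega=\tfrac{1}{\sqrt{2N}}\bigl((k-l)\sigma_1+(k+2l)\sigma_2-(2k+l)\sigma_3\bigr),\qquad de^4=-\tfrac{3}{r}\bigl((k+l)\sigma_1-k\sigma_2-l\sigma_3\bigr),
\]
and $d\sigma_i=\frac{1}{\sqrt2}\Img\Omega$ for each $i$. So $d\bigl(e^4\wedge\sum x_i\sigma_i\bigr)$ avoids an $e^4\wedge\Img\Omega$ term only if $\sum x_i=0$, i.e. only if $\sum x_i\sigma_i$ lies in $\mathrm{span}(\omega,de^4)$. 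But $\omega^2$, $\omega\wedge de^4$ and $(de^4)^2$ are linearly independent, so $\omega^2$ is not of the form $de^4\wedge(\alpha\omega+\beta\, de^4)$. What closes the argument is
\[
d\Rel\Omega=2\sqrt2\,(e^{1526}+e^{2637}+e^{3715}),
\]
together with the identity $\omega\wedge\omega+de^4\wedge de^4=-2(e^{1526}+e^{2637}+e^{3715})$. These give
\[
h=-\frac{3}{4\pi^2}\Bigl(e^4\wedge de^4+\tfrac{1}{\sqrt2}\Rel\Omega\Bigr).
\]
Since $p_1\wedge\Rel\Omega=0$, this yields $p_1\wedge h=-\frac{9}{16\pi^4}\,e^4\wedge\omega^2\wedge de^4=-\frac{243\,kl(k+l)}{16\pi^4 rN}\,e^{1526374}$, where $r=\sqrt{6N}$. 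Multiplying by $\Vol(N_{k,l})$ and using $r\sqrt{2N}=2\sqrt3N$ gives $-\frac{243\,kl(k+l)}{4N^2}$ directly, sign included, with no consistency check needed.
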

\begin{proof}
    
\begin{enumerate}
\item    
First, we compute the first Pontryagin form with respect to the reductive connection. For the reductive connection $\nabla^0$, we have that 
\[
	R\left(\nabla^0\right)(V,W) = - \pi_{*[V,W]_{\mathfrak{h}}}.
\]

\begin{align*}
	R(\nabla^0) &= -\pi_{*[e_1,e_5]_\mathfrak{h}} e^1 \wedge e^5 - \pi_{*[e_2,e_6]|_\mathfrak{h}} - \pi_{*[e_3,e_7]_|\mathfrak{h}}e^3 \wedge e^7\\
		    &= -\frac{(k-l)}{2(k^2+kl+l^2)}\pi_{*h}e^1\wedge e^5 - \frac{(2l+k)}{2(k^2+kl+l^2)} e^2 \wedge e^6 \\
		    &+ \frac{(2k+l)}{2(k^2+kl+l^2)}e^3 \wedge e^7.
\end{align*}

Then
\begin{align*}
	p_1\left(\nabla^0\right) &= -\frac{1}{8\pi^2} \tr\left(R\left(\nabla^0\right) \wedge R\left(\nabla^0\right)\right) \\
		      &= -\frac{2\tr(\pi_{*h}^2)}{8\pi^2 \cdot 4(k^2+kl+l^2)^2} \bigl((k-l)(2l+k)e^{1526} \\
		      &-(2l+k)(2k+l)e^{2637} - (2k+l)(k-l)e^{3715}\bigr)
\end{align*}	
\[
	\tr(\pi_{*h}) = 2\left((k-l)^2 + (2k+l)^2 + (2l+k)^2\right) = 12\left(k^2+kl+l^2\right).
\]
It is easy to see that the only terms in $R\left(\nabla^0\right)$ are $e^1\wedge e^5$, $e^2 \wedge e^6$ and $e^3\wedge e^7$.

So, we have:
\begin{align*}
	p_1(e_1,e_5,e_2,e_6) &= -\frac{1}{4\pi^2\cdot (k^2+kl+l^2)} \bigl((k-l)(2l+k)e^{1526}\\
			     &-(2l+k)(2k+l)e^{2637} - (2k+l)(k-l)e^{3715}\bigr)
\end{align*}

\item Second, we find the 3-form $h$ such that $dh = p_1$.

The $H$-invariant space of 3-forms is generated by 
\begin{align*}
	e^{154},\ &e^{264}, e^{374}, \Rel\left((e^1+ie^5)\wedge(e^2 + ie^6)\wedge(e^3 + ie^7)\right), \text{ and }\\
	&\Img\left((e^1+ie^5)\wedge(e^2 + ie^6)\wedge(e^3 + ie^7)\right).
\end{align*}

Denote \[
	R := \Rel\left((e^1+ie^5)\wedge(e^2 + ie^6)\wedge(e^3 + ie^7)\right) = e^{123}-e^{167}+e^{257}-e^{356}.
\]
Using the formula for the differential $d\varphi(V_1,V_2) = - \varphi([V_1,V_2]_{\mathfrak{m}})$ and commutator relations from Section \ref{AWspaces}, we find that

\begin{align*}
	de^1 &= -\frac{1}{\sqrt2}\left(e^{67}-e^{23}\right) + \frac{3(k+l)}{r}e^{45},\\
	de^2 &= -\frac{1}{\sqrt2}\left(e^{13}-e^{57}\right) - \frac{3k}{r}e^{46},\\
	de^3 &= -\frac{1}{\sqrt2}\left(e^{56}-e^{12}\right) - \frac{3l}{r}e^{47},\\
	de^4 &= -\frac{3(k+l)}{r}e^{15}+\frac{3k}{r}e^{26}+\frac{3l}{r}e^{37},\\
	de^5 &= -\frac{1}{\sqrt2}\left(e^{27}-e^{36}\right) + \frac{3(k+l)}{r}e^{14},\\
	de^6 &= -\frac{1}{\sqrt2}\left(e^{35}-e^{17}\right) - \frac{3k}{r}e^{24},\\
	de^7 &= -\frac{1}{\sqrt2}\left(e^{16}-e^{25}\right) - \frac{3l}{r}e^{34}.
\end{align*}

Then using the Leibniz rule for the differential one can find that 
\begin{equation*}
    h\left(\nabla^0\right) = \frac{3r}{8\pi^2(k^2+kl+l^2)}\left((k+l)e^{145}-ke^{246}-le^{347}\right) - \frac{(k^2+kl+l^2)}{2\sqrt2}R.
\end{equation*}

Note that $p_1 \wedge R = 0$, so the last term doesn't contribute to the quantity that we are interested in.

\item Finally, we have

\begin{align*}
	p_1 \wedge h &= -\frac{9r}{32\pi^4 (k^2+kl+l^2)^2} \bigl(-(k+l)(2l+k)(2k+l)e^{2637145} +k(2k+l)(k-l)e^{3715246} \\
		     &-l(k-l)(2l+k)e^{1526347}\bigr)\\
		     &= -\frac{3r}{32\pi^4 (k^2+kl+l^2)^2} \bigl((k+l)(2l+k)(2k+l) - k(2k+l)(k-l) \\
		     &+l(k-l)(2l+k)\bigr)e^{1234567}\\
		     &= -\frac{81r}{32\pi^4 (k^2+kl+l^2)^2}\cdot kl(k+l)e^{1234567}
\end{align*}

Under the choice of the Killing form $\tr(XY)$ the volume of $SU(3)$ is $16\pi^5\sqrt{3}$, hence, the volume of $N_{k,l}$ is 
\[
\Vol(N_{k,l}) = \Vol(SU(3))/\Vol(S^1_{k,l}) = \frac{16\pi^5\sqrt{3}}{2\pi \sqrt{2(k^2+kl+l^2)}} = \frac{8\pi^4 \sqrt{3}}{\sqrt{2(k^2+kl+l^2)}}.
\]

Under the chosen orientation for the $N_{k,l}$ the volume form is given as $e^{1526374} = e^{1234567}.$

Hence,

\begin{equation*}
    \int p_1 \wedge h = -\frac{243kl(k+l)}{4(k^2+kl+l^2)^2}. \qedhere
\end{equation*}
\end{enumerate}
\end{proof}
\subsection{Exceptional cases}
\label{sect:except}

In this section we discuss exceptional cases $N_{1,1}$ and $N_{1,0}$. As we have seen in section \ref{sect:lifts}, the general inequality $k^2+kl+l^2 > 3\cdot \max(k,l,k+l)$ is not satisfied in these cases. Additionally, some of the weights of $S\otimes S$ degenerate in these cases making the $H$-invariant subspace of $S\otimes S$ larger than in general case. 

\subsubsection{\texorpdfstring{$N_{1,1}$}{N11}}

We compute the lifts of the weights of $\widetilde{\pi}$ in the case $k = l = 1$, which is not covered by the general argument. 

In this case $r = \sqrt{6(k^2+kl+l^2)} = 3\sqrt2$ and  $E = \frac{i}{3\sqrt2}(3,-3,0) = \frac{i}{\sqrt2}(1,-1,0)$. $\delta(X) = i(x_1-x_2) = \beta_1$. $\delta(E) = i\sqrt2$.

The weights $i(k-l) = 0, i(2k+l) = 3i, i(2l+k) = 3i$. 

The weight $i(2l+k)$ lifts to $\delta + \beta_2$ and $i(2k+l)$ to $\beta_3$ which coincide, since $\delta = \beta_1$. 

Weight $i(k-l)$ lifts to $0$, but we can write it as $\delta - \beta_1$. 

So the lifts of the weights are:

\begin{align*}
	\delta - \beta_1, & \delta - \beta_1;\\
	-\beta_2, & \delta + \beta_2;\\
	\beta_3, & \delta - \beta_3.
\end{align*}

In particular, the formula for $I_B$ is the same as in the general case considered in section \ref{sect:comps}.

\subsubsection{\texorpdfstring{$N_{1,0}$}{N10}}

Now, consider the case $k = 1, l = 0$.

In this case $r = \sqrt{6(k^2+kl+l^2)} = \sqrt6$, $E = \frac{i}{\sqrt6}(1,-2,1)$, and $\delta(E) = \frac{2}{\sqrt6} = \frac{\sqrt6}{3}$. 

Weights $i(k-l) = i$, $i(2l+k) = i$, $i(2k+l) = 2i$. 

It is easy to see that weight $i(k-l)$ lifts to $\beta_1 - \delta$, $-i(2l+k)$ lifts to $-\beta_2 - \delta$, $i(2k+l)$ lifts to $\beta_3$ while $-i(2k+l)$ lifts to $-\beta_3$, since $\beta_3(E) = 0$. So the lifts are:

\begin{align*}
	\beta_1 - \delta,\ & 2\delta - \beta_1;\\
	-\beta_2 - \delta,\ & 2\delta + \beta_2;\\
	\beta_3,\ & - \beta_3.
\end{align*}
Note that in this case $\beta_1 - \delta = 2\delta + \beta_2$, since $i(k-l) = i(2l+k) = i$. 

We now revisit the computation of $I_B$ from section \ref{sect:comps}. The only difference comes from the terms that include $\beta_i$ in odd powers. For $N_{1,0}$ we have:

\begin{align*}
	\widetilde{I}_B  =&2\widetilde{I}_D + 2\biggl( \left(3b +\frac{a}{2} + \frac{1}{24}\right)U + \left(3a^2 + a\right)V + \left(3b +\frac{3a}{8} + \frac{3}{16\cdot 24}\right)\delta^4 +\\
	&+ \left(\frac{4a}{8} + \frac{3}{48}\right)\delta^2\left(\beta_1^2+\beta_2^2+\beta_3^2\right) \\
	&- \frac{a}{2}\left(\beta_1^2 + \beta_2^2 + \beta_3^2\right)\left((\beta_1-\delta) + (- \beta_2 - \delta)\right)\delta\\
	&- \frac{2}{24}\left((\beta_1-\delta)^3 + (- \beta_2 - \delta)^3\right)\delta + \left(-\frac{1}{48} -\frac{a}{2}\right)\left((\beta_1-\delta) + (- \beta_2 - \delta)\right)\delta^3\biggr).
\end{align*}
And 
\begin{align*}
	\widetilde{I}_B/\delta = 8\widetilde{I}_D/\delta + \frac{1}{12}(\beta_1^2 + \beta_2^2 +\beta_3^2)\delta &+ \frac{1}{24}(\beta_1^2 + \beta_2^2 + \beta_3^2)((\beta_1-\delta) + (- \beta_2 - \delta)) - \\
	&- \frac{1}{6}((\beta_1-\delta)^3 + (- \beta_2 - \delta)^3).
\end{align*}
In this case, after symmetrizing over the Weyl group, we can see that all terms give $0$ and 
\[
	I_B = 8I_D.
\]

One would also need to compute the eigenvalues of ${}^{\gamma_0}{B}^{\frac{1}{2},\frac{1}{2}}$ and ${}^{\gamma_0}{B}^{\frac{1}{3},0}$ separately for each of the exceptional Aloff--Wallach spaces, since $(S\otimes S)^H$ is larger for them than in the general case. However, we will use a shortcut:

Note that in general we have $I_B = 8I_D + c$, where c is some constant ($c = 0$ in the case of $N_{1,0}$ and $c = 2$ otherwise). Moreover, the computations of $I_D$ and $\int_M p_1\wedge h$ from sections \ref{sect:comps} and \ref{sect:int} are valid for all values of $k,l$. Then, using the computation of the $s$ invariant from section \ref{sect:comp_s} and the result of \cite{Kreck1993} that $s(N_{k,l}) = \frac{kl(k+l)}{2^5\cdot 7}$, one can deduce that $c + J_B = 0$, which then can be applied to the cases of exceptional Aloff--Wallach spaces to verify the formulas for $\bar{\nu}$ and $\xi$.

\section*{Acknowledgments}
The author would like to thank his advisor C. LeBrun for guidance and support, S. Goette for answering questions about his papers and D. Crowley and J. Nordstr\"om for their comments and suggestions. 
\newpage
\begingroup
\setlength{\emergencystretch}{.8em}
\printbibliography
\endgroup

\end{document}